\newtheorem{theorem}{Theorem}[section]
\newtheorem{lemma}[theorem]{Lemma}
\newtheorem{corollary}[theorem]{Corollary}
\theoremstyle{definition}
\newtheorem{definition}[theorem]{Definition}
\theoremstyle{remark}
\newtheorem{remark}[theorem]{Remark}
\def\B{\mbox{\rm{I}}\!\mbox{\rm{B}}}
\def\R{\mbox{\rm{I}}\!\mbox{\rm{R}}}
\def\<{\leq}             \def\>{\geq}
\numberwithin{equation}{section}
\begin{document}

\title[Modified Navier-Stokes Equations in Critical Besov-Q Spaces]
{Global Mild Solutions of Modified Navier-Stokes Equations with Small Initial Data in Critical Besov-Q Spaces}

%    Information for first author
\author[Pengtao Li]{Pengtao\ Li}
\address{College of Mathematics, Qingdao University, Qingdao, Shandong 515063, China}

\email{ptli@qdu.edu.cn}

\author[Jie Xiao]{Jie Xiao}
\address{Department of Mathematics and Statistics, Memorial
University, St. John's, NL A1C, 5S7, Canada }
\email{jxiao@mun.ca}

\author[Qixiang Yang]{Qixiang Yang}
\address{School of Mathematics and Statics, Wuhan University, Wuhan, 430072, China.}
\email{qxyang@whu.edu.cn}

%    author two information

\thanks{PTL's research was supported by: NSFC No. 11171203 and No.11201280;
New Teacher's Fund for Doctor Stations, Ministry of Education No.20114402120003;
Guangdong Natural Science Foundation S2011040004131; Foundation for
Distinguished Young Talents in Higher Education of Guangdong, China,
LYM11063. JX was supported by NSERC of Canada and URP of Memorial University, Canada}

\subjclass[2000]{Primary 35Q30; 76D03; 42B35; 46E30}

\date{}

\dedicatory{}

\keywords{Modified Navier-Stokes equations, Besov-Q spaces, mild solutions, existence, uniqueness.}

\begin{abstract}
 This paper is devoted to establishing the global existence and uniqueness of a mild solution of the
 modified Navier-Stokes equations with a small initial data in the critical Besov-Q
space.
\end{abstract}

\maketitle

\tableofcontents \pagenumbering{arabic}
%\addcontentsline{toc}{chapter}{}
%\section{List of symbols}
%\addcontentsline{toc}{chapter}{List of symbols}
%\section{Introduction} \addcontentsline{toc}{chapter}{Introduction}

 \vspace{0.1in}

\section{Statement of the main theorem} % use lowercase except for proper names
\label{intro}

For $\beta>1/2$,  the Cauchy problem of the modified
Navier-Stokes equations on the half-space $\mathbb{R}^{1+n}_{+}=
(0,\infty) \times \mathbb{R}^{n}, n\geq 2,$ is to decide the existence of a solution $u$ to:
\begin{equation}\label{eqn:ns}
\left\{\begin{array}{ll} \frac{\partial u} {\partial t}
+(-\Delta)^{\beta} u + u \cdot \nabla u -\nabla p=0,
& \mbox{ in } \mathbb{R}^{1+n}_{+}; \\
\nabla \cdot u=0,
& \mbox{ in } \mathbb{R}^{1+n}_{+}; \\
u|_{t=0}= a, & \mbox{ in } \mathbb{R}^{n},
\end{array}
\right.
\end{equation}
where $(-\Delta)^{\beta}$ represents the $\beta$-order Laplace operator
defined by the Fourier transform in the space variable:
$$
\widehat{(-\Delta)^{\beta}u}(\cdot,\xi)= |\xi|^{2\beta} \hat{u}(\cdot,\xi).
$$

Here, it is appropriate to point out that (\ref{eqn:ns}) is a generalization of the classical Navier-Stokes system and two-dimensional  quasi-geostrophic equation which have continued to attract attention extensively, and that the dissipation $(-\Delta)^{\beta}u$ still retains the physical meaning of the nonlinearity $u\cdot\nabla u+\nabla p$ and the divergence-free condition $\nabla\cdot u=0$.

Upon letting $R_{j}, j=1,2,\cdots n$, be the Riesz transforms,
writing
\begin{equation}\nonumber
\begin{cases}
\mathbb{P}= \{\delta_{l,l'}+ R_{l}R_{l'}\}, l,l'=1,\cdots,n;\\
\mathbb{P}\nabla (u\otimes u)= \sum\limits_{l}
\frac{\partial}{\partial x_{l}} (u_{l}u) -
\sum\limits_{l} \sum\limits_{l'} R_{l}R_{l'} \nabla (u_{l} u_{l'});\\
\widehat{e^{-t(-\Delta)^{\beta}}f}(\xi) =
e^{-t|\xi|^{2\beta}}\hat{f}(\xi),
\end{cases}
\end{equation}
and using $\nabla \cdot u=0$, we can see that a solution of the above
Cauchy problem is then obtained via the integral equation:
\begin{equation}\label{eqn:mildsolution}
\begin{cases}
u(t,x)= e^{-t (-\Delta)^{\beta}} a(x) - B(u,u)(t,x);\\
B(u,u)(t,x)\equiv\int^{t}_{0} e^{-(t-s)(-\Delta)^{\beta}}
\mathbb{P}\nabla (u\otimes u) ds,
\end{cases}
\end{equation}
which can be solved by a fixed-point method whenever the convergence
is suitably defined in a function space. Solutions of
(\ref{eqn:mildsolution}) are called mild solutions of
(\ref{eqn:ns}). The notion of such a mild solution was pioneered by
Kato-Fujita \cite{KF} in 1960s. During the latest decades, many
important results about the mild solutions to (\ref{eqn:ns}) have been
established; see for example, Cannone \cite{C1, C2},
Germin-Pavlovic-Staffilani \cite{GPS}, Giga-Miyakawa \cite{GM},
Kato \cite{Kat},  Koch-Tataru \cite{KT},
Wu \cite{W1,W2,W3,W4}, and their references including Kato-Ponce \cite{KP} and Taylor \cite{Ta}.

The main purpose of this paper is to establish the following global
existence and uniqueness of a mild solution to (\ref{eqn:ns}) with a small
initial data in the critical Besov-Q space.

\begin{theorem}\label{mthmain} Given
$$
\begin{cases}
\beta>\frac{1}{2};\\
1< p, q<\infty;\\
\gamma_{1}=\gamma_{2}-2\beta+1;\\
m>\max \{p,\frac{n}{2\beta}\};\\
0<m'<\min\{1,\frac{p}{2\beta}\}.
\end{cases}
$$
If the index $(\beta, p,\gamma_2)$ obeys
\begin{equation}\nonumber%\label{scope of index-3}
1< p\leq2\ \ \&\ \ \frac{2\beta-2}{p}<\gamma_{2}\leq\frac{n}{p}
\end{equation}
or
\begin{equation}\nonumber%\label{scope of index-2}
 2<p<\infty\ \ \&\ \ \beta-1<\gamma_{2}\leq\frac{n}{p},
\end{equation}
then (\ref{eqn:ns}) has a unique global mild solution in
$(\B^{\gamma_{1}, \gamma_{2}}_{p, q, m, m'})^{n}$ for any initial
data $a$ with $\|a\|_{(\dot{B}^{\gamma_{1},\gamma_{2}}_{p,q})^{n}}$
being small.
Here the symbols
$\dot{B}^{\gamma_{1},\gamma_{2}}_{p,q}$, and $\B^{\gamma_{1},
\gamma_{2}}_{p, q, m, m'}$ stand for the so-called Besov-Q spaces
and their induced tent spaces, and will be determined
properly in Sections \ref{sec3} and \ref{sec4}.

\end{theorem}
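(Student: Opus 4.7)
\bigskip

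\noindent\textbf{Proof proposal.} The plan is to invoke the standard abstract Picard--Kato fixed--point scheme: if $X$ is a Banach space, $B:X\times X\to X$ is a continuous bilinear map with $\|B(u,v)\|_X\leq K\|u\|_X\|v\|_X$, and $y\in X$ satisfies $\|y\|_X<1/(4K)$, then $u=y-B(u,u)$ admits a unique solution in the closed ball of radius $2\|y\|_X$. I would apply this with
$$
X=\big(\B^{\gamma_{1},\gamma_{2}}_{p,q,m,m'}\big)^{n},\qquad y=e^{-t(-\Delta)^{\beta}}a,\qquad B(u,u)=\int_{0}^{t}e^{-(t-s)(-\Delta)^{\beta}}\mathbb{P}\nabla(u\otimes u)\,ds.
$$
Thus the whole theorem reduces to two analytic estimates, modulo the careful definitions in Sections \ref{sec3} and \ref{sec4}: the \emph{linear estimate} $\|e^{-t(-\Delta)^{\beta}}a\|_{X}\lesssim\|a\|_{(\dot{B}^{\gamma_{1},\gamma_{2}}_{p,q})^{n}}$ and the \emph{bilinear estimate} $\|B(u,v)\|_{X}\lesssim\|u\|_{X}\|v\|_{X}$. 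Smallness of $\|a\|_{\dot{B}^{\gamma_{1},\gamma_{2}}_{p,q}}$ then yields smallness of $\|y\|_X$, whence existence and uniqueness of the mild solution follow at once.

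\smallskip

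For the linear bound I would Littlewood--Paley decompose $a=\sum_{j}\Delta_{j}a$ and exploit the decay estimate $\|e^{-t(-\Delta)^{\beta}}\Delta_{j}f\|_{p}\lesssim e^{-ct 2^{2\beta j}}\|\Delta_{j}f\|_{p}$ together with Bernstein's inequalities. The tent--type norm on $\B^{\gamma_{1},\gamma_{2}}_{p,q,m,m'}$ involves a Carleson--type supremum in Q--space averaging together with an $L^{m'}_{t}L^{p}_{x}$ integrability. The identity $\gamma_{1}=\gamma_{2}-2\beta+1$ is exactly the scaling needed so that the weight $t^{(\gamma_{2}-\gamma_{1})/(2\beta)}$ absorbed into the time variable transfers a derivative of order $2\beta-1$, while the hypothesis $0<m'<\min\{1,p/(2\beta)\}$ guarantees integrability of the time factor near $t=0$ after interchanging the Littlewood--Paley sum with the time integral.

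\smallskip

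For the bilinear estimate I would write $\mathbb{P}\nabla(u\otimes v)$ as a vector of zero--order Calder\'on--Zygmund operators applied to first derivatives of $u_{l}v_{l'}$, so that (on a Littlewood--Paley block of frequency $2^{j}$) one gains a factor $2^{j}e^{-c(t-s)2^{2\beta j}}$ from $e^{-(t-s)(-\Delta)^{\beta}}\nabla$. The heart of the argument is then a product estimate showing $\|u\otimes v\|_{\text{Besov--Q derived space}}\lesssim\|u\|_{X}\|v\|_{X}$, which I would establish by a Bony paraproduct decomposition combined with Fefferman--Stein--type duality between the Q--norm and its predual. This is where the two parameter regimes genuinely diverge: when $1<p\leq 2$ the borderline lower bound $\gamma_{2}>(2\beta-2)/p$ is exactly what is needed for the low--high paraproduct to reassemble into the correct Q--type space via $L^{p}$--$L^{p'}$ pairing, whereas for $p>2$ a pure H\"older argument forces only $\gamma_{2}>\beta-1$. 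The conditions $m>\max\{p,n/(2\beta)\}$ and $m'<\min\{1,p/(2\beta)\}$ provide the Hölder conjugacy pair needed to split the temporal factor in the Duhamel integral and to retain the Carleson supremum after integrating in $s\in(0,t)$.

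\smallskip

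The main obstacle I foresee is the bilinear estimate, specifically the product rule in Besov--Q spaces in the low--regularity regime $1<p\le 2$. Here classical paraproduct identities must be adapted to the Q--space Carleson framework, and the sharp threshold $\gamma_{2}>(2\beta-2)/p$ has to emerge \emph{naturally} from convergence of a dyadic sum of the form $\sum_{j}2^{j(2\beta-1-\gamma_{1})}\cdots$; any slack will force a strict inequality and destroy the critical result. Once this paraproduct--Carleson estimate is in hand, closure of the fixed--point argument is formal and the smallness threshold on $\|a\|_{(\dot{B}^{\gamma_{1},\gamma_{2}}_{p,q})^{n}}$ is a direct function of the constants in (i) and (ii).
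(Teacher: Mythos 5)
Your high-level reduction is exactly the one the paper uses: the problem is recast as a Picard--Kato fixed--point iteration in the tent space $X=(\B^{\gamma_1,\gamma_2}_{p,q,m,m'})^n$, and the theorem then hinges on a linear estimate for the caloric extension $e^{-t(-\Delta)^\beta}a$ (this is the paper's Theorem~\ref{th1}) together with a bilinear estimate for $B(u,v)$ on $X\times X$ (this is the content of Theorem~\ref{th4} plus Lemmas~\ref{lem53} and~\ref{lem54}). So far your outline agrees with the paper's structure.

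The gap is at the decisive step. You assert that the bilinear estimate would follow from ``a Bony paraproduct decomposition combined with Fefferman--Stein--type duality between the Q--norm and its predual,'' but you do not carry this out, and in fact this is not how the paper closes the argument, nor is it clear that it would close at all in the space $\B^{\gamma_1,\gamma_2}_{p,q,m,m'}$ as actually defined. That space is not a single Carleson--type norm: it is the intersection of four wavelet--coefficient conditions ($\B^{\gamma_1,\gamma_2,I}_{p,q,m}$ through $\B^{\gamma_1,\gamma_2,IV}_{p,q,m'}$), each involving suprema over dyadic cubes of weighted $\ell^p$--sums of wavelet coefficients with different time weights $(t2^{2j\beta})^m$ or $(t2^{2j\beta})^{m'}$ over different time regions. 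The paper's bilinear estimate proceeds by expanding $B_l(u,v)$ in the Meyer wavelet basis, splitting the Duhamel integral into explicit time regimes such as $[0,2^{-1-2j'\beta}]$, $[2^{-1-2j'\beta},t/2]$, $[t/2,t]$ depending on whether $t\gtrless 2^{-2j\beta}$, and then bounding the resulting wavelet coefficients $a^{\epsilon,i}_{j,k}(t)$, $b^{\epsilon,i}_{j,k}(t)$ coefficient-by-coefficient using Lemmas~\ref{le6}--\ref{le7} and the Cauchy--Schwarz/H\"older machinery of Lemmas~\ref{inequality1}--\ref{inequality4}. The parameters $m$, $m'$ and the thresholds $\gamma_2>(2\beta-2)/p$ (for $1<p\le 2$) and $\gamma_2>\beta-1$ (for $p>2$) enter through the convergence of specific dyadic sums in those coefficient estimates, not through a H\"older conjugacy pair for the time integral as you suggest, nor through a duality pairing. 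Until you actually produce the bilinear estimate in these tent spaces --- either by the paper's wavelet--coefficient route or by genuinely implementing the paraproduct--plus--duality plan with the correct time splitting --- the proof is not complete; as written, the central difficulty is labelled rather than resolved.
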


Neededless to say, our current work grows from the already-known
results. In \cite{Lio}, Lions proved the global existence of the
classical solutions of (\ref{eqn:ns}) when $\beta\geq\frac{5}{4}$ and $n=3$.
This existence result was extended to $\beta\geq
\frac{1}{2}+\frac{n}{4}$ by Wu \cite{W1}, and moreover, for the
important case $\beta<\frac{1}{2}+\frac{n}{4}$. Wu \cite{W2, W3}
established the global existence for (\ref{eqn:ns}) in the Besov
spaces $\dot{B}^{1+\frac{n}{p}-2\beta,q}_{p}(\mathbb{R}^{n})$ for
$1\leq q\leq \infty$ and for either $\frac{1}{2}<\beta$ and $p=2$ or
$\frac{1}{2}<\beta\leq1$ and $2<p<\infty$ and in
$\dot{B}^{r,\infty}_{2}(\mathbb{R}^{n})$ with
$r>\max\{1,1+\frac{n}{p}-2\beta\}$; see also \cite{W4} concerning
the corresponding regularity. Importantly, Koch-Tataru \cite{KT}
studied the global existence and uniqueness of (\ref{eqn:ns}) with $\beta=1$
via introducing $BMO^{-1}(\mathbb{R}^{n})$.
Extending Koch-Tataru's work \cite{KT}, Xiao \cite{X, X1} introduced
the Q-spaces $Q^{-1}_{0<\alpha<1}(\mathbb{R}^{n})$ to
investigate the global existence and uniqueness of the classical Navier-Stokes
system. The ideas of \cite{X} were developed by Li-Zhai \cite{LZ}
to study the global existence and uniqueness of (\ref{eqn:ns}) with small
data in a class of Q-type spaces $Q_{\alpha}^{\beta,-1}(\mathbb{R}^{n})$
under $\beta\in(\frac{1}{2}, 1)$. Recently, Lin-Yang \cite{LY} got
the global existence and uniqueness of (\ref{eqn:ns}) with initial
data being small in a diagonal Besov-Q space for $\beta\in(\frac{1}{2}, 1)$.

In fact, the above historical citations lead us to make a decisive
two-fold observation. On the one hand, thanks to that (\ref{eqn:ns})
is invariant under the scaling
\begin{equation}\nonumber%\label{eq1}
\begin{cases}
u_{\lambda}(t,x) = \lambda^{2\beta-1} u(\lambda^{2\beta}t, \lambda
x);\\
p_{\lambda}(t,x) = \lambda^{4\beta-2} p(\lambda^{2\beta}t, \lambda
x),
\end{cases}
\end{equation}
the initial data space $\dot{B}^{\gamma_{1},\gamma_{2}}_{p,q}$ is
critical for (\ref{eqn:ns}) in the sense that the space is invariant
under the scaling
\begin{equation}\label{eq2}
f_{\lambda}(x)=\lambda^{2\beta-1}f(\lambda x).
\end{equation}
A simple computation, along with letting $\beta=1$ in (\ref{eq2}),
indicates that the function spaces:
$$
\begin{cases}
\dot{L}^{2}_{\frac{n}{2}-1}(\mathbb{R}^{n})=\dot{B}^{-1+\frac{n}{2},2}_{2}(\mathbb{R}^{n});\\
L^{n}(\mathbb{R}^{n});\\
\dot{B}^{-1+\frac{n}{p},q}_{p}(\mathbb{R}^{n});\\
BMO^{-1}(\mathbb{R}^{n}),
\end{cases}
$$
are critical for (\ref{eqn:ns}) with $\beta=1$. Moreover,
(\ref{eq2}) under $\beta>1/2$ is valid for functions in the
homogeneous Besov spaces
$\dot{B}^{1+\frac{n}{2}-2\beta,1}_{2}(\mathbb{R}^{n})$ and
$\dot{B}^{1+\frac{n}{2}-2\beta,\infty}_{2}(\mathbb{R}^{n})$ attached
to (\ref{eqn:ns}). On the other hand, it is suitable to mention
the following relations:
$$
\begin{cases}
\dot{B}^{\gamma_{1},
\frac{n}{p}}_{p,q}=\dot{B}^{\gamma_{1},q}_{p}(\mathbb{R}^{n})\ \hbox{for}\ 1\leq p, q<\infty\ \&\ -\infty<\gamma_{1}<\infty;\\
\dot{B}^{1+\frac{n}{p}-2\beta,\frac{n}{p}}_{p_{0},q_{0}}\supseteq
\dot{B}^{1+\frac{n}{p}-2\beta,q}_{p}(\mathbb{R}^{n})\ \hbox{for}\ 1<p\leq p_{0}\ \&\ 1<q\leq q_{0}<\infty\ \&\ \beta>0;\\
\dot{B}^{\alpha-\beta+1,\alpha+\beta-1}_{2,2}=Q^{\beta}_{\alpha}(\mathbb{R}^{n})
\ \hbox{for}\  \alpha\in(0,1)\ \&\ \beta\in(1/2,1)\ \& \
\alpha+\beta-1\geq0.
\end{cases}
$$

In order to briefly describe the argument for Theorem \ref{mthmain},
we should point out that the function spaces used in \cite{KT,X,LZ} have a
common trait in the structure, i.e., these spaces can be seen as the
Q-spaces with $L^{2}$ norm, and the advantage of such spaces is that
Fourier transform plays an important role in estimating the bilinear
term on the corresponding solution spaces. Nevertheless, for the
global existence and uniqueness of a mild solution to (\ref{eqn:ns}) with a
small initial data in $\dot{B}^{\gamma_{1},\gamma_{2}}_{p,q}$, we
have to seek a new approach. Generally speaking, a mild solution of
(\ref{eqn:ns}) is obtained by using the following method. Assume
that the initial data belongs to
$\dot{B}^{\gamma_{1},\gamma_{2}}_{p,q}(\mathbb{R}^{n})$. Via the
iteration process:
$$
\begin{cases}
u^{(0)}(t,x)=e^{-t(-\Delta)^{\beta}} a(x);\\
u^{(j+1)}(t,x)=u^{(0)}(t,x)-B(u^{(j)},u^{(j)})(t,x)\quad\hbox{for}\quad
j=0,1,2,...,
\end{cases}
$$
we construct a contraction mapping on a space in
$\mathbb{R}^{1+n}_{+}$, denoted by $X(\mathbb{R}^{1+n}_{+})$. With
the initial data being small, the fixed point theorem implies that
there exists a unique mild solution of (\ref{eqn:ns}) in
$X(\mathbb{R}^{1+n}_{+})$. In this paper, we choose
$X(\mathbb{R}^{1+n}_{+})=\B^{\gamma_{1},\gamma_{2}}_{p, q, m, m'}$
associated with $\dot{B}^{\gamma_{1},\gamma_{2}}_{p, q}$. Owing to Theorem \ref{th1},
we know that if $f\in \dot{B}^{\gamma_{1},\gamma_{2}}_{p,q}$ then $
e^{-t(-\Delta)^{\beta}}f(x)\in X(\mathbb{R}^{1+n}_{+})$. Hence the construction of contraction mapping comes down to prove the following
assertion:
 the bilinear operator
$$
\begin{array}{rl}
B(u,v)=&\int^{t}_{0} e^{-(t-s)(-\Delta)^{\beta}} \mathbb{P}\nabla
(u\otimes v) ds\end{array}
$$
is bounded from $(X(\mathbb{R}^{1+n}_{+}))^n \times
(X(\mathbb{R}^{1+n}_{+}))^n $ to $(X(\mathbb{R}^{1+n}_{+}))^n $.

For this purpose, using multi-resolution analysis, we decompose
 $B_{l}(u,v)$ into several parts based on the relation between $t$ and $2^{-2j\beta}$, and expanse every part in terms of $\{\Phi^{\varepsilon}_{j,k}\}$. More importantly, Lemmas \ref{lem53} \& \ref{lem54} enable us to
 obtain an estimate from  $(\B^{\gamma_{1},\gamma_{2}}_{p,q, m,m'})^n%\linebreak
\times (\B^{\gamma_{1},\gamma_{2}}_{p,q, m,m'})^n$ to
$(\B^{\gamma_{1},\gamma_{2}}_{p,q, m,m'})^n$.

\begin{remark}
\item{\rm(i)} Our initial spaces in Theorem \ref{mthmain} include both
 $\dot{B}^{1+\frac{n}{p}-2\beta,q}_{p}(\mathbb{R}^{n})$ in Wu
\cite{W1,W2,W3,W4}, $Q_{\alpha}^{\beta,-1}(\mathbb{R}^{n})$ in Xiao
\cite{X} and Li-Zhai \cite{LZ}. Moreover, in \cite{LZ, LY}, the
scope of $\beta$ is $(\frac{1}{2}, 1)$. Our method is valid for
$\beta>\frac{1}{2}$.

\item{\rm(ii)} We point out that $\dot{B}^{\gamma_{1}, \gamma_{2}}_{p,q}$  provide a lot of new critical initial spaces where the well-posedness of equations (\ref{eqn:ns}) holds.
 By Lemma \ref{lem:2.9}, for $\beta=1$,  Theorem \ref{mthmain} holds for the initial spaces $\dot{B}^{\gamma_{1}, \gamma_{2}}_{p,q}$ satisfying
 $$\dot{B}^{-1+w, w}_{2,q'}\subset Q^{-1}_{\alpha}(\mathbb{R}^{n}),\ q'<2, w>0$$
 or
 $$ Q^{-1}_{\alpha}(\mathbb{R}^{n})\subset \dot{B}^{-1+w, w}_{2,q''}\subset BMO^{-1}(\mathbb{R}^{n}),\ 2<q''<\infty, w>0.$$
 In some sense, $\dot{B}^{\gamma_{1}, \gamma_{2}}_{p,q}$ fills the gap between the critical spaces $Q^{-1}_{\alpha}(\mathbb{R}^{n})$ and $BMO^{-1}(\mathbb{R}^{n})$. See also Lemma \ref{lem:2.9}, Corollary \ref{co:1} and Remark
\ref{remark1}.

\item{\rm(iii)}
For  a initial data $ a\in\dot{B}^{\gamma_{1}, \gamma_{2}}_{p,q}$, the index $\gamma_{1}$ represents the regularity of  $a$.
In Theorem \ref{mthmain}, taking $\dot{B}^{\gamma_{1}, \gamma_{2}}_{p,q}=\dot{B}^{-1+w, w}_{p,q}$ with $w>0$, $p>2$ and $q>2$ yields
$\dot{B}^{-1+w, w}_{p,q}\subset BMO^{-1}(\mathbb{R}^{n})$. Compared with the ones in $BMO^{-1}(\mathbb{R}^{n})$, the elements of $\dot{B}^{-1+w, w}_{p,q}$ have higher regularity. Furthermore,
Theorem \ref{mthmain} implies that the regularity of our solutions become higher along with the growth of $\gamma_{1}$.

\item{\rm(iv)} Interestingly, Federbush \cite{Feder} employed the divergence-free wavelets to study the classical Navier-Stokes equations, while the wavelets used in this paper are classical Meyer wavelets. In addition, when constructing a contraction mapping, Federbush's method was based on the estimates of ``long wavelength residues".  Nevertheless, our wavelet approach based on Lemmas \ref{le6}-\ref{le7} and the Cauchy-Schwarz inequality is to convert the bilinear estimate of $B(u,v)$ into various efficient computations involved in the wavelet coefficients of $u$ and $v$.
\end{remark}

The remaining of this paper is organized as follows. In Section
\ref{sec2}, we list some preliminary knowledge on wavelets and give
the wavelet characterization of the Besov-Q spaces. In Sections
\ref{sec3}-\ref{sec4} we define the initial data spaces and the
corresponding solution spaces. Section \ref{sec5} carries out
a necessary analysis of some non-linear terms and a prior estimates. In
Section \ref{sec6}, we verify Theorem \ref{mthmain} via Lemmas \ref{le6}-\ref{le7}
which will be demonstrated in Sections \ref{sec7}-\ref{sec8} respectively.

{\it Notation}:\  ${\mathsf U}\approx{\mathsf V}$ represents that
there is a constant $c>0$ such that $c^{-1}{\mathsf V}\le{\mathsf
U}\le c{\mathsf V}$ whose right inequality is also written as
${\mathsf U}\lesssim{\mathsf V}$. Similarly, one writes ${\mathsf V}\gtrsim{\mathsf U}$ for
${\mathsf V}\ge c{\mathsf U}$.

\section{Some preliminaries}\label{sec2}

First of all, we would like to say that we will always utilize tensorial product real-valued
orthogonal wavelets which may be regular Daubechies wavelets (only used for characterizing Besov and Besov-Q spaces) and
classical Meyer wavelets, but also to recall that the regular Daubechies wavelets are such Daubechines wavelets that are smooth enough and have more sufficient vanishing moments than the relative spaces do; see Lemma \ref{le9} and the part before Lemma \ref{lem:c}.

Next, we present some preliminaries on Meyer wavelets $\Phi^{\epsilon}(x)$ in detail
and refer the reader to \cite{Me}, \cite{Woj} and
\cite{Yang1} for further information. Let $\Psi^{0}$ be an even function in $ C^{\infty}_{0}
([-\frac{4\pi}{3}, \frac{4\pi}{3}])$ with
\begin{equation}
\left\{ \begin{aligned}
&0\leq\Psi^{0}(\xi)\leq 1; \nonumber\\
&\Psi^{0}(\xi)=1\text{ for }|\xi|\leq \frac{2\pi}{3}.\nonumber
\end{aligned} \right.
\end{equation}
If
  $$\begin{array}{rl}
  \Omega(\xi)= \sqrt{(\Psi^{0}(\frac{\xi}{2}))^{2}-(\Psi^{0}(\xi))^{2}},
  \end{array}
  $$
  then $\Omega$ is an even
function in $ C^{\infty}_{0}([-\frac{8\pi}{3}, \frac{8\pi}{3}])$.
Clearly,
\begin{equation}
\left\{ \begin{aligned}
&\Omega(\xi)=0\text{ for }|\xi|\leq \frac{2\pi}{3};\nonumber\\
&\Omega^{2}(\xi)+\Omega^{2}(2\xi)=1=\Omega^{2}(\xi)+\Omega^{2}(2\pi-\xi)\text{
for }\xi\in [\frac{2\pi}{3},\frac{4\pi}{3}].\nonumber
\end{aligned} \right.
\end{equation}

 Let $\Psi^{1}(\xi)=
\Omega(\xi) e^{-\frac{i\xi}{2}}$. For any $\epsilon=
(\epsilon_{1},\cdots, \epsilon_{n}) \in \{0,1\}^{n}$, define
$\Phi^{\epsilon}(x)$ via the Fourier transform $\hat{\Phi}^{\epsilon}(\xi)=
\prod\limits^{n}_{i=1} \Psi^{\epsilon_{i}}(\xi_{i})$. For $j\in
\mathbb{Z}$ and $k\in\mathbb{Z}^{n}$, set $\Phi^{\epsilon}_{j,k}(x)=
2^{\frac{nj}{2}} \Phi^{\epsilon} (2^{j}x-k)$. Furthermore, we put
\begin{equation}\nonumber
\left\{ \begin{aligned}
E_{n}&=\{0,1\}^{n}\backslash\{0\}; \\
F_{n}&=\{(\epsilon,k):\epsilon\in E_{n}, k\in\mathbb{Z}^{n}\};\\
\Lambda_{n}&=\{(\epsilon,j,k), \epsilon\in E_{n}, j\in\mathbb{Z},
k\in \mathbb{Z}^{n}\},
\end{aligned} \right.
\end{equation}
and for any $\epsilon\in \{0,1\}^{n}, k\in \mathbb{Z}^{n}$ and a function
$f$ on $\mathbb R^n$, we write $f^{\epsilon}_{j,k}= \langle f,
\Phi^{\epsilon}_{j,k}\rangle .$

The following result is well-known.
\begin{lemma}\label{le1}
The Meyer wavelets $\{\Phi^{\epsilon}_{j,k}\}_{(\epsilon,j,k)\in
\Lambda_{n}}$ form an  orthogonal basis in $L^{2}(\mathbb{R}^{n})$.
Consequently, for any $f\in L^{2}(\mathbb{R}^{n})$, the following wavelet
decomposition holds in the $L^2$ convergence sense:
$$\begin{array}{c}
f=\sum\limits_{(\epsilon,j,k)\in\Lambda_{n}}f^{\epsilon}_{j,k}\Phi^{\epsilon}_{j,k}.
\end{array}$$
\end{lemma}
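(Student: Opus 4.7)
The plan is to establish Lemma \ref{le1} via the standard multi-resolution analysis (MRA) framework for Meyer wavelets, splitting the task into (a) orthonormality of $\{\Phi^{\epsilon}_{j,k}\}_{(\epsilon,j,k)\in\Lambda_n}$ and (b) completeness in $L^{2}(\mathbb{R}^n)$.

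For orthonormality I would first work in one space dimension. The key identity $\sum_{k\in\mathbb{Z}}|\Psi^{0}(\xi+2\pi k)|^{2}=1$ follows from the compact Fourier support of $\Psi^{0}$ in $[-4\pi/3,4\pi/3]$ together with the built-in relation $\Omega^{2}(\xi)+\Omega^{2}(2\pi-\xi)=1$ on $[2\pi/3,4\pi/3]$; via Plancherel plus Poisson summation this gives the orthonormality of $\{\Phi^{0}(\cdot-k)\}_{k\in\mathbb{Z}}$. The analogous one-dimensional identities for $\Psi^{1}(\xi)=\Omega(\xi)e^{-i\xi/2}$, together with the two-scale relation $\Omega^{2}(\xi)+\Omega^{2}(2\xi)=1$ and the phase factor $e^{-i\xi/2}$, produce orthonormality at each fixed scale and orthogonality across distinct dyadic scales. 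Taking tensor products over the $n$ coordinates and over $\epsilon\in\{0,1\}^{n}$ then yields the full orthonormality of $\{\Phi^{\epsilon}_{j,k}\}_{(\epsilon,j,k)\in\Lambda_n}$.

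For completeness I would set $V_{j}=\overline{\mathrm{span}}\{\Phi^{0}_{j,k}:k\in\mathbb{Z}^{n}\}$ and $W_{j}=\overline{\mathrm{span}}\{\Phi^{\epsilon}_{j,k}:\epsilon\in E_{n},\,k\in\mathbb{Z}^{n}\}$. The defining identity $(\Psi^{0}(\xi/2))^{2}=(\Psi^{0}(\xi))^{2}+\Omega^{2}(\xi)$ gives the orthogonal decomposition $V_{j+1}=V_{j}\oplus W_{j}$. A direct Fourier-analytic argument then shows $\bigcap_{j}V_{j}=\{0\}$ (the Fourier supports of elements of $V_{j}$ shrink to the origin as $j\to-\infty$) and that $\bigcup_{j}V_{j}$ is dense in $L^{2}(\mathbb{R}^{n})$ (as $j\to+\infty$ the orthogonal projection onto $V_{j}$ tends to the identity on the Schwartz class, hence on $L^{2}$). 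Telescoping the decompositions $V_{j+1}=V_{j}\oplus W_{j}$ yields $L^{2}(\mathbb{R}^{n})=\bigoplus_{j\in\mathbb{Z}}W_{j}$, which is exactly the claimed wavelet decomposition with $L^{2}$ convergence.

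The only real bookkeeping obstacle is keeping track of the Fourier supports in higher dimensions and of the convergence of the telescoping series; however, the entire argument is completely classical and may be quoted directly from \cite{Me,Woj,Yang1}, so in practice the proof in the paper can be compressed to a brief citation rather than a reconstruction of the MRA machinery.
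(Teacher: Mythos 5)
Your proposal is correct, and it anticipates exactly what the paper actually does: the paper states Lemma~\ref{le1} as ``well-known'' with no proof, relying on the references \cite{Me}, \cite{Woj}, \cite{Yang1}. Your MRA sketch (orthonormality of integer translates via $\sum_{k}|\Psi^{0}(\xi+2\pi k)|^{2}=1$ and the relation $\Omega^{2}(\xi)+\Omega^{2}(2\pi-\xi)=1$; the scale decomposition $V_{j+1}=V_{j}\oplus W_{j}$ from $(\Psi^{0}(\xi/2))^{2}=(\Psi^{0}(\xi))^{2}+\Omega^{2}(\xi)$; density and trivial intersection of the $V_{j}$; tensor product to pass to $\mathbb{R}^{n}$) is the standard argument underlying that citation, and you explicitly note at the end that the paper compresses this to a bare citation — which it does.
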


Moreover, for $j\in\mathbb{Z}$, let
$$
\begin{array}{rl}
&P_{j}f= \sum\limits_{k\in \mathbb{Z}^{n}} f^{0}_{j,k}
\Phi^{0}_{j,k}\ \ \text{ and }\ \ Q_{j}f= \sum\limits_{(\epsilon,k)\in
F_{n}} f^{\epsilon}_{j,k} \Phi^{\epsilon}_{j,k}.
\end{array}
$$
For the above Meyer wavelets,  by Lemma \ref{le1}, the product of any two functions $u$
and $v$ can be decomposed as
\begin{equation}\label{eq:decompose}
\begin{array}{rl}
uv= & \sum\limits_{j\in \mathbb{Z}} P_{j-3}u Q_{j}v +
\sum\limits_{j\in \mathbb{Z}} Q_{j}u Q_{j}v
+ \sum\limits_{0<j-j'\leq 3} Q_{j}u Q_{j'}v \\
&+ \sum\limits_{0<j'-j\leq 3} Q_{j}u Q_{j'}v + \sum\limits_{j\in
\mathbb{Z}} Q_{j}u P_{j-3}v.
\end{array}
\end{equation}

Suppose that $\varphi$ is a function on $\mathbb{R}^{n}$ satisfying
\begin{equation}\nonumber
\left\{ \begin{aligned}
&\text{supp}\hat{\varphi}\subset\{\xi\in\mathbb{R}^{n}:
|\xi|\leq1\}; \\
&\hat{\varphi}(\xi)=1\text{ for }
\{\xi\in\mathbb{R}^{n}: |\xi|\leq\frac{1}{2}\},\\
\end{aligned} \right.
\end{equation}
and that
$$\varphi_{v}(x)=2^{n(v+1)}\varphi(2^{v+1}x)-2^{nv}\varphi(2^{v}x)\ \ \forall\ \ v\in \mathbb{Z},$$
are the Littlewood-Paley functions; see \cite{P}.

\begin{definition}
Given $-\infty<\alpha<\infty$, $0<p,q<\infty$. A function
$f\in \mathbb{S}'(\mathbb{R}^{n})/\mathcal{P}(\mathbb{R}^{n})$
belongs to $\dot{B}^{\alpha,q}_{p}(\mathbb{R}^{n})$ if
$$\begin{array}{rl}
\|f\|_{\dot{B}^{\alpha,q}_{p}}&=\Big[\sum\limits_{v\in\mathbb{Z}}2^{qv\alpha}\|\varphi_{v}\ast
f\|_{p}^{q}\Big]^{\frac{1}{q}}<\infty.
\end{array}$$
\end{definition}

The following lemma is essentially known.

\begin{lemma}\label{le9}{\rm (Meyer \cite{Me})} Let
$\{\Phi^{\epsilon,1}_{j,k}\}_{(\epsilon,j,k)\in\Lambda_{n}}$ and
$\{\Phi^{\epsilon,2}_{j,k}\}_{(\epsilon,j,k)\in\Lambda_{n}}$ be two
different wavelet bases which are sufficiently regular. If
$$a^{\epsilon,\epsilon'}_{j,k,j',k'}=\langle \Phi^{\epsilon,1}_{j,k},\ \Phi^{\epsilon',2}_{j',k'} \rangle,
$$
then for any natural number $N$ there
exists a positive constant $C_N$ such that for
$j,j'\in\mathbb{Z}$ and $k,k'\in\mathbb{Z}^{n}$,
\begin{equation}\label{eq8}
|a^{\epsilon,\epsilon'}_{j,k,j',k'}|\leq
C_{N}2^{-|j-j'|(\frac{n}{2}+N)}\Big(\frac{2^{-j}+2^{-j'}}{2^{-j}+2^{-j'}+|2^{-j}k-2^{-j'}k'|}\Big)^{n+N}.
\end{equation}

\end{lemma}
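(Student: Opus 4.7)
The inequality is Meyer's classical almost-orthogonality estimate between two sufficiently regular wavelet bases, and my plan is to reproduce the standard vanishing-moment/Taylor-expansion argument. Since the right-hand side is symmetric in $(j,k,\epsilon)\leftrightarrow(j',k',\epsilon')$, I may assume without loss of generality that $j\geq j'$, so that $\Phi^{\epsilon,1}_{j,k}$ is the finer-scale wavelet whose vanishing moments will be exploited. For the regularity to suffice for a given $N$, I use that $\Phi^{\epsilon,1}$ has vanishing moments up to order $N-1$ (automatic from the tensor-product construction for $\epsilon\in E_n$ once the one-dimensional mother wavelet has enough vanishing moments) and that both wavelets are $C^{N}$ with derivatives decaying faster than polynomial rate $2N+2n$.

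Taylor-expanding $\Phi^{\epsilon',2}_{j',k'}$ to order $N-1$ around $c:=2^{-j}k$, the vanishing moments of $\Phi^{\epsilon,1}_{j,k}$ annihilate the Taylor polynomial and leave
$$a^{\epsilon,\epsilon'}_{j,k,j',k'}=\int\Phi^{\epsilon,1}_{j,k}(x)R(x)\,dx,$$
where the integral-form remainder satisfies $|R(x)|\lesssim|x-c|^{N}\sup_{y\in[c,x]}|\partial^{N}\Phi^{\epsilon',2}_{j',k'}(y)|$. Combining this with the standard estimates $|\partial^{N}\Phi^{\epsilon',2}_{j',k'}(y)|\leq C_M 2^{j'(n/2+N)}(1+|2^{j'}y-k'|)^{-M}$ and $|\Phi^{\epsilon,1}_{j,k}(x)|\leq C_K 2^{jn/2}(1+|2^jx-k|)^{-K}$, and changing variables via $y=2^j(x-c)$, the numerical prefactor in front of the remaining $y$-integral simplifies to $2^{(j'-j)(n/2+N)}=2^{-|j-j'|(n/2+N)}$.

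The remaining $y$-integral must supply the spatial decay $(1+|d|)^{-(n+N)}$, where $d:=2^{j'}(2^{-j}k-2^{-j'}k')$ is the normalized center separation. I split the $y$-domain according to whether $|y|\leq|d|\cdot 2^{j-j'}/2$ or not. On the near region, the triangle inequality yields $|d+s\cdot 2^{j'-j}y|\geq|d|/2$ for all $s\in[0,1]$, so choosing $M=n+N$ turns the derivative-decay factor directly into $(1+|d|)^{-(n+N)}$, while the leftover $|y|^{N}(1+|y|)^{-K}$ integrates to a finite constant. On the far region the derivative-decay factor is discarded, but the domain constraint forces $|y|\gtrsim|d|\cdot 2^{j-j'}$, so with $K\geq 2N+2n$ the tail of $\Phi^{\epsilon,1}_{j,k}$ itself produces a factor dominating $2^{-(j-j')(n+N)}|d|^{-(n+N)}$, which is stronger than needed. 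Summing the two regional contributions and treating the symmetric case $j'>j$ identically (with the roles of the two wavelets swapped) completes the proof.

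The main technical obstacle is the simultaneous book-keeping that realizes exactly the $(n+N)$-power decay in $|d|$ while preserving the $2^{-|j-j'|(n/2+N)}$ scale factor; the choices $M=n+N$ and $K\geq 2N+2n$ above (or any uniformly larger values) achieve this, but one must verify carefully that no region of the split undercompensates. Beyond this accounting, no genuinely new idea is required beyond Meyer's standard almost-orthogonality scheme.
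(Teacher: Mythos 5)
The paper does not prove this lemma; it simply cites Meyer's book \cite{Me} and states it as known. Your proof is the standard Taylor-expansion/vanishing-moments argument that underlies Meyer's almost-orthogonality estimate, and it is correct: the symmetry reduction to $j\geq j'$, the annihilation of the degree-$(N-1)$ Taylor polynomial by the vanishing moments of the finer wavelet, the rescaling that produces the factor $2^{-|j-j'|(n/2+N)}$, and the near/far split (with $M=n+N$ and $K\geq 2N+2n$) to extract the spatial decay $(1+|d|)^{-(n+N)}$ are all in order and together yield \eqref{eq8}. This is essentially the proof found in Meyer's reference, so you have filled in the details of exactly the argument the paper delegates to \cite{Me}.
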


According to Lemma \ref{le9} and Peetre's paper \cite{P}, we see that this definition of
$\dot{B}^{\alpha,q}_{p}(\mathbb{R}^{n})$ is independent of the
choice of $\{\varphi_{v}\}_{v\in\mathbb{Z}}$, whence reaching the following description of
$\dot{B}^{\alpha,q}_{p}(\mathbb{R}^{n})$.
\begin{theorem}\label{th2}
Given $s\in\mathbb{R}$ and $0<p,q<\infty$. A function $f$
belongs to $\dot{B}^{s,q}_{p}(\mathbb{R}^{n})$ if and only if
$$\begin{array}{rl}
&\Big[\sum\limits_{j\in\mathbb{Z}}2^{qj(s+\frac{n}{2}-\frac{n}{p})}
\Big(\sum\limits_{\epsilon,k}|f^{\epsilon}_{j,k}|^{p}\Big)^{\frac{q}{p}}\Big]^{\frac{1}{q}}<\infty.
\end{array}$$
\end{theorem}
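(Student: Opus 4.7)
The plan is to show the equivalence between the Littlewood-Paley norm defining $\dot{B}^{s,q}_{p}(\mathbb{R}^{n})$ and the wavelet-coefficient sequence norm on the right-hand side. The main tool is Lemma \ref{le9}, which supplies the almost-diagonal estimate (\ref{eq8}) on the change-of-basis matrix between two sufficiently regular wavelet systems; the Littlewood-Paley convolution kernels $\varphi_{v}$ behave, after rescaling, like wavelets of the same scale and so fit into this framework.

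First I would exploit the Fourier support of the Meyer wavelets. Since $\widehat{\Phi^{\epsilon}_{j,k}}$ with $\epsilon\in E_{n}$ is supported in a dyadic annulus $\{|\xi|\sim 2^{j}\}$, the wavelet projection $Q_{j}f$ has Fourier support in the same annulus. Consequently $\varphi_{v}\ast Q_{j}f=0$ for $|j-v|>C$, where $C$ depends only on the Fourier supports of $\varphi$ and $\Omega$, and Young's inequality with $\|\varphi_{v}\|_{1}\lesssim 1$ yields
$$\|\varphi_{v}\ast f\|_{p}\lesssim \sum_{|j-v|\leq C}\|Q_{j}f\|_{p}.$$
A symmetric argument applied to $Q_{j}f=\sum_{|v-j|\leq C}Q_{j}(\varphi_{v}\ast f)$ delivers the reverse inequality, so passing to the weighted $\ell^{q}$ norm in $v$ reduces matters to an equivalent weighted $\ell^{q}$ sum in $j$ of the $\|Q_{j}f\|_{p}$.

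Next I would establish the fixed-scale sequence equivalence
$$\|Q_{j}f\|_{p}\approx 2^{nj(\frac{1}{2}-\frac{1}{p})}\Big(\sum_{(\epsilon,k)\in F_{n}}|f^{\epsilon}_{j,k}|^{p}\Big)^{1/p},$$
which expresses that $\{\Phi^{\epsilon}_{j,k}\}_{(\epsilon,k)\in F_{n}}$ is an unconditional $L^{p}$-basis of its span. The upper bound follows by viewing $Q_{j}f$ as a superposition of smooth molecules: the almost-diagonal kernel estimate (\ref{eq8}), combined with the Schur-type argument of Peetre \cite{P}, controls the resulting sum once we account for the normalization $\|\Phi^{\epsilon}_{j,k}\|_{p}\approx 2^{nj(\frac{1}{2}-\frac{1}{p})}$; the lower bound follows by duality against a comparable wavelet expansion. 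Combining this with the previous step and taking the $2^{jqs}$-weighted $\ell^{q}$ sum produces exactly the stated characterization. The principal technical obstacle is precisely this fixed-scale $L^{p}$ equivalence, and more generally the independence of the wavelet characterization from the choice of (sufficiently regular) wavelet basis, which is where Lemma \ref{le9} plays its central role.
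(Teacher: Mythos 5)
Your outline is the standard argument and essentially matches the paper's route: the paper itself gives only a one-line citation of Lemma~\ref{le9} together with Peetre's monograph rather than a detailed proof. Using the compact Fourier support of the Meyer wavelets to identify the Littlewood--Paley blocks $\varphi_v\ast f$ with the wavelet projections $Q_jf$ up to a bounded overlap in scales, and then converting $\|Q_jf\|_p$ into the normalized $\ell^p$ norm of the scale-$j$ coefficients, is precisely the argument the paper is implicitly invoking.

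The one step that would fail as written is the lower bound in your fixed-scale equivalence, which you obtain ``by duality against a comparable wavelet expansion.'' The theorem is stated for the full range $0<p<\infty$, but for $0<p\le 1$ the space $L^p$ is not locally convex and its continuous dual is trivial, so there is no $\ell^p$--$\ell^{p'}$ pairing to dualize against; the duality argument therefore only covers $p>1$. The standard repair (going back to Peetre and Frazier--Jawerth) is instead to observe that $Q_jf$ is band-limited to the annulus $|\xi|\sim 2^{j}$, so that each coefficient $f^{\epsilon}_{j,k}=\langle Q_jf,\Phi^{\epsilon}_{j,k}\rangle$ is controlled pointwise by $2^{-nj/2}$ times a Peetre maximal function of $Q_jf$ evaluated at $2^{-j}k$; summing $p$-th powers over the lattice and using the $L^p$-boundedness of that maximal function (which holds for every $0<p<\infty$, with implicit constant depending on $p$ through the admissible order of the maximal function) gives the lower bound with no appeal to duality. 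With that substitution your sketch is complete and consistent with what the paper cites.
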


\section{Besov-Q spaces via wavelets}\label{sec3}

\subsection{Definition and its wavelet formulation} The forthcoming Besov-Q spaces cover many important function spaces, for example, Besov spaces, Morrey
spaces and Q-spaces and so on. Such spaces were first introduced by
wavelets in Yang \cite{Yang1} and were studied by several authors.
For a related overview, we refer to Yuan-Sickel-Yang \cite{YSY}.

Let $\varphi\in C^{\infty}_{0} (B(0,n))$ and $\varphi(x)=1 $ for
$x\in B(0, \sqrt{n})$. Let $Q(x_{0},r)$ be a cube parallel to the
coordinate axis, centered at $x_{0}$ and with side length $r$. For
simplicity, sometimes, we denote by $Q=Q(r)$ the cube $Q(x_{0},r)$
and let $\varphi_{Q}(x)= \varphi(\frac{x-x_{Q}}{r})$. For $1<p,q<
\infty$ and $\gamma_{1}, \gamma_{2}\in \mathbb{R}$, let
$m_{0}=m^{\gamma_{1},\gamma_{2}}_{p,q}$ be a positive constant large
enough. For arbitrary function $f$, let
$S^{\gamma_{1},\gamma_{2}}_{p,q,f}$  be the class of the polynomial
functions $P_{Q,f}$ such that
$$\begin{array}{rl}
\int x^{\alpha} \varphi_{Q}(x) (f(x)
-P_{Q,f}(x)) dx =0\ \ \forall\ \ |\alpha|\leq m_{0}.
\end{array}$$

\begin{definition}
Given $1<p,q<\infty$ and $\gamma_{1}, \gamma_{2}\in \mathbb{R}$. We say that $f$ belongs to
the Besov-Q space $\dot{B}^{\gamma_{1},\gamma_{2}}_{p,q}:=\dot{B}^{\gamma_{1},\gamma_{2}}_{p,q}(\mathbb{R}^{n})$ provided

\begin{equation}\label{eq:b}
\sup\limits_{Q}|Q|^{\frac{\gamma_{2}}{n}-\frac{1}{p}}
\inf\limits_{P_{Q, f}\in S^{\gamma_{1},\gamma_{2}}_{p,q,f}}
\|\varphi_{Q} (f-P_{Q,f})\|_{\dot{B}^{\gamma_{1}, q}_{p}} <\infty,
\end{equation}
where the superum is taken over all cubes $Q$ with center $x_{Q}$ and
length $r$.
\end{definition}

As a generalization of the Morrey spaces, the forthcoming Besov-Q spaces cover many important function spaces, for example, Besov spaces, Morrey spaces and Q-spaces and so on. Such spaces were first introduced by wavelets in Yang \cite{Yang1}. On the other hand, our Lemma \ref{lem:c} as below and Yang-Yuan's \cite[Theorem 3.1]{Ya-Yu} show that our Besov-Q spaces and their Besov type spaces coincide; see also Liang-Sawano-Ullrich-Yang-Yuan \cite{LSUYY} and Yuan-Sickel-Yang \cite{YSY} for more information on the so-called Yang-Yuan's spaces.

Given $1<p, q<\infty$ and $\gamma_{1}, \gamma_{2}\in \mathbb{R}$.
Let $m_{0}=m^{\gamma_{1},\gamma_{2}}_{p,q}$ be a sufficiently big integer.
For the regular Daubechies wavelets $\Phi^{\epsilon}(x)$,  there
exist two integers $m\geq m_{0}=m^{\gamma_{1},\gamma_{2}}_{p,q}$ and
$M$ such that for $\epsilon\in E_n$, $\Phi^{\epsilon}(x)\in
C^m_0([-2^M, 2^M]^n)$ and $\int x^{\alpha} \Phi^{\epsilon}(x)
dx=0\ \forall\ |\alpha|\leq m$.  By applying the regular Daubechies wavelets,
we have the following wavelet characterization for
$\dot{B}^{\gamma_{1},\gamma_{2}}_{p,q}$.

\begin{lemma}\label{lem:c}
\item{\rm (i)}
$f= \sum\limits_{\epsilon,j,k} a^{\epsilon}_{j,k}
\Phi^{\epsilon}_{j,k}\in
\dot{B}^{\gamma_{1},\gamma_{2}}_{p,q}$ if and only if
\begin{equation}\label{eq:c}
\begin{array}{rl}
&\sup\limits_{Q}|Q|^{\frac{\gamma_{2}}{n}-\frac{1}{p}}
\Big[\sum\limits_{nj\geq -\log_{2}|Q|}
2^{jq(\gamma_{1}+\frac{n}{2}-\frac{n}{p})}
\Big(\sum\limits_{(\epsilon,k):Q_{j,k}\subset Q}
|a^{\epsilon}_{j,k}|^{p}\Big) ^{ \frac{q}{p} }\Big]^{\frac{1}{q}} <+
\infty,\end{array}
\end{equation}
where the supremum is taken over all dyadic cubes in
$\mathbb{R}^{n}$.
\item{\rm (ii)} The wavelet characterization in (i) is also true for
the Meyer wavelets.
\end{lemma}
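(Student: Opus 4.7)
The plan is to reduce the characterization to the classical wavelet description of $\dot{B}^{\gamma_{1},q}_{p}$ given by Theorem \ref{th2}, applied on each cube $Q$ to the localized function $\varphi_{Q}(f-P_{Q,f})$. The organizing principle is to split the wavelet expansion of $f$ into an \emph{inside} part indexed by $\{(\epsilon,j,k):Q_{j,k}\subset Q\}$ and a \emph{far-field} part; the inside part should produce exactly the right-hand side of (\ref{eq:c}), while the far-field part should be absorbed by a judicious choice of the polynomial $P_{Q,f}$.

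For necessity, I would fix a cube $Q$ and take a near-optimal $P_{Q,f}\in S^{\gamma_{1},\gamma_{2}}_{p,q,f}$. For each regular Daubechies wavelet $\Phi^{\epsilon}_{j,k}$ with $Q_{j,k}\subset Q$ and $nj\geq -\log_{2}|Q|$, the compact support together with the fact that $\varphi_{Q}\equiv 1$ on $\mathrm{supp}\,\Phi^{\epsilon}_{j,k}$ gives $\Phi^{\epsilon}_{j,k}=\varphi_{Q}\Phi^{\epsilon}_{j,k}$, and the vanishing moments up to order $m\geq m_{0}$ annihilate the polynomial contribution, so $\langle f,\Phi^{\epsilon}_{j,k}\rangle=\langle \varphi_{Q}(f-P_{Q,f}),\Phi^{\epsilon}_{j,k}\rangle$. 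Then Theorem \ref{th2} applied to $\varphi_{Q}(f-P_{Q,f})\in\dot{B}^{\gamma_{1},q}_{p}$ controls the inner sum in (\ref{eq:c}), and taking the supremum over $Q$ closes this direction.

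For sufficiency, I would reverse the scheme. Given coefficients verifying (\ref{eq:c}), write $f=f_{Q}^{\mathrm{in}}+f_{Q}^{\mathrm{out}}$ where $f_{Q}^{\mathrm{in}}$ collects the terms with $Q_{j,k}\subset Q$. Theorem \ref{th2} together with a pointwise multiplier bound for $\varphi_{Q}$ on $\dot{B}^{\gamma_{1},q}_{p}$ immediately handles $\varphi_{Q}f_{Q}^{\mathrm{in}}$. For the far-field piece $f_{Q}^{\mathrm{out}}$, I would take $P_{Q,f}$ to be the Taylor polynomial of $f_{Q}^{\mathrm{out}}$ at $x_{Q}$ of order $m_{0}$; because each wavelet appearing in $f_{Q}^{\mathrm{out}}$ is smooth and supported away from (a dilate of) $Q$, standard pointwise bounds on its derivatives combined with polynomial approximation yield a bound on $\|\varphi_{Q}(f_{Q}^{\mathrm{out}}-P_{Q,f})\|_{\dot{B}^{\gamma_{1},q}_{p}}$ proportional to $|Q|^{1/p-\gamma_{2}/n}$ times the coefficient norm, after summing geometric series across scales.

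Part (ii) is then obtained by transferring between the regular Daubechies and Meyer bases using the almost-diagonal matrix estimate (\ref{eq8}) of Lemma \ref{le9}: that decay makes the change-of-basis matrix $\{a^{\epsilon,\epsilon'}_{j,k,j',k'}\}$ bounded on the mixed-norm sequence space encoded by (\ref{eq:c}) via a Schur-type argument, so the wavelet coefficients of $f$ in one basis satisfy (\ref{eq:c}) if and only if those in the other do. The hard part will be the sufficiency direction, specifically controlling $\varphi_{Q}(f_{Q}^{\mathrm{out}}-P_{Q,f})$: the polynomial approximation errors must be summed across infinitely many scales and over all distant cubes $Q_{j,k}$ without sacrificing the $|Q|^{\gamma_{2}/n-1/p}$ weight, and this closes only once $m_{0}=m^{\gamma_{1},\gamma_{2}}_{p,q}$ is chosen large enough to force absolute convergence of the resulting double sums.
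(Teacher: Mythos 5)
The paper itself does not supply a proof of Lemma \ref{lem:c}: it is stated as known and implicitly referred to Yang \cite{Yang1} and to the coincidence with Yang--Yuan's Besov-type spaces via \cite{Ya-Yu}. So there is no proof in the paper against which to compare your argument line by line; what can be assessed is whether your outline would actually close.

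Your overall strategy — reduce to Theorem \ref{th2} on the localized piece $\varphi_{Q}(f-P_{Q,f})$, split wavelets into those \emph{inside} $Q$ and a far-field remainder, kill the polynomial by vanishing moments, and transfer between bases with Lemma \ref{le9} and a Schur test — is the standard Frazier--Jawerth/Yang--Yuan route and is the right one. However, there is a concrete gap in your necessity step. You claim that $Q_{j,k}\subset Q$ forces $\mathrm{supp}\,\Phi^{\epsilon}_{j,k}$ to lie inside the region where $\varphi_{Q}\equiv 1$, so that $\varphi_{Q}\Phi^{\epsilon}_{j,k}=\Phi^{\epsilon}_{j,k}$. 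This is false near the base scale of $Q$: the Daubechies wavelet $\Phi^{\epsilon}_{j,k}$ is supported in $2^{-j}(k+[-2^{M},2^{M}]^{n})$, a cube of side $2^{M+1-j}$ that is a fixed dilate of $Q_{j,k}$, whereas $\varphi_{Q}\equiv 1$ only on a ball of radius $\sqrt{n}\,|Q|^{1/n}$ about $x_{Q}$. Thus when $2^{-j}$ is comparable to $|Q|^{1/n}$ (precisely when $2^{-j}>2^{-M-2}|Q|^{1/n}$), some wavelets with $Q_{j,k}\subset Q$ spill outside the $\varphi_{Q}\equiv 1$ zone, and the identity $\langle f,\Phi^{\epsilon}_{j,k}\rangle=\langle\varphi_{Q}(f-P_{Q,f}),\Phi^{\epsilon}_{j,k}\rangle$ breaks. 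You must either (a) handle the finitely many scales $-\log_{2}|Q|^{1/n}\le j< -\log_{2}|Q|^{1/n}+M+2$ separately (they contribute only $O(1)$ cubes per scale and can be absorbed by Corollary \ref{cor:dilation} applied to a dilated cube), or (b) run the argument over a slightly enlarged cube $\tilde Q$ and use scale-invariance of the norm to pass back.

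Two further points. First, in the sufficiency direction you correctly isolate the hard part (summing the Taylor error of $f_{Q}^{\mathrm{out}}$ across scales and distant cubes against the weight $|Q|^{\gamma_{2}/n-1/p}$), but you do not carry it out; as written it is a program, not a proof, and the geometric-series convergence you invoke is exactly the place where the size of $m_{0}=m^{\gamma_{1},\gamma_{2}}_{p,q}$ enters and would need to be quantified. Second, your appeal to ``a pointwise multiplier bound for $\varphi_{Q}$ on $\dot{B}^{\gamma_{1},q}_{p}$'' needs a uniformity-in-$Q$ statement; this does follow from the fact that $\varphi_{Q}$ is a rigid rescaling of a single bump, but it should be said. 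Part (ii) via the almost-diagonal estimate (\ref{eq8}) and a Schur argument on the tent-type sequence norm is in the spirit of the paper's own proof of Theorem \ref{th4}, and is plausible, but again only sketched.
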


A direct application of Lemma \ref{lem:c} gives the following assertion.
\begin{corollary}
\label{cor:BMpropty} Given $1<p,q<\infty, \gamma_{1},
\gamma_{2}\in \mathbb{R}$.

\item{\rm(i)} Each $\dot{B}^{\gamma_{1},\gamma_{2}}_{p,q}$ is
a Banach space.

\item{\rm(ii)} The definition of
$\dot{B}^{\gamma_{1},\gamma_{2}}_{p,q}$ is
independent on the choice of $\phi$.

\end{corollary}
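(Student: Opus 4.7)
The plan is to derive both assertions directly from the wavelet characterization in Lemma \ref{lem:c}, which recasts membership in $\dot{B}^{\gamma_{1},\gamma_{2}}_{p,q}$ as a condition on wavelet coefficients that is manifestly independent of the auxiliary bump $\varphi$ used in (\ref{eq:b}). Everything then reduces to understanding a single sequence space.

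For part (i), I would first introduce the sequence space $\ell^{\gamma_{1},\gamma_{2}}_{p,q}$ consisting of coefficient arrays $\{a^{\epsilon}_{j,k}\}_{(\epsilon,j,k)\in\Lambda_{n}}$ for which the right-hand side of (\ref{eq:c}) is finite. Subadditivity and homogeneity of this norm are inherited from those of $\ell^{p}$, $\ell^{q}$, and the supremum over dyadic cubes. Completeness will be verified by a Fatou-type argument: a Cauchy sequence $\{a^{(\nu)}\}$ in $\ell^{\gamma_{1},\gamma_{2}}_{p,q}$ is coordinatewise Cauchy, hence has a coordinatewise limit $a^{(\infty)}$; the lower semicontinuity of the $\ell^{p}$, $\ell^{q}$, and supremum norms under coordinatewise convergence gives $\|a^{(\infty)}\|\leq \liminf_{\nu}\|a^{(\nu)}\|$, while convergence $a^{(\nu)}\to a^{(\infty)}$ in norm follows from a dominated-convergence argument applied cube by cube. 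Then Lemma \ref{lem:c} identifies $f\mapsto\{\langle f,\Phi^{\epsilon}_{j,k}\rangle\}$ as an isomorphism between $\dot{B}^{\gamma_{1},\gamma_{2}}_{p,q}$ (taken modulo $\mathcal{P}(\mathbb{R}^{n})$) and $\ell^{\gamma_{1},\gamma_{2}}_{p,q}$, which transfers the Banach property back to the Besov-Q space.

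For part (ii), the key observation is that the quantity in (\ref{eq:c}) depends only on the wavelet coefficients $a^{\epsilon}_{j,k}=\langle f,\Phi^{\epsilon}_{j,k}\rangle$ and on the dyadic cubes of $\mathbb{R}^{n}$; no $\varphi$ appears. Consequently, for any two admissible choices $\varphi_{1}$ and $\varphi_{2}$ of the bump, the resulting instances of the norm (\ref{eq:b}) are each equivalent to the common wavelet quantity (\ref{eq:c}), and therefore equivalent to each other, with constants depending on $\varphi_{1},\varphi_{2}$, the integer $m_{0}$, and the wavelet $\Phi^{\epsilon}$, but not on $f$.

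The step I expect to require the most care is the reconstruction direction of Lemma \ref{lem:c} invoked in (i): given coefficients satisfying (\ref{eq:c}), one must show that the formal series $\sum_{\epsilon,j,k} a^{\epsilon}_{j,k}\Phi^{\epsilon}_{j,k}$ converges in $\mathbb{S}'(\mathbb{R}^{n})/\mathcal{P}(\mathbb{R}^{n})$ to a bona fide representative of $\dot{B}^{\gamma_{1},\gamma_{2}}_{p,q}$. This will rely on the almost-orthogonality bound (\ref{eq8}) together with a careful summation over scales $j$, and it is precisely here that the polynomial quotient and the vanishing-moment threshold $m_{0}=m^{\gamma_{1},\gamma_{2}}_{p,q}$ come into play; once this reconstruction is in hand, both assertions of the corollary follow by the soft arguments sketched above.
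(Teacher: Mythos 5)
Your approach is essentially the paper's: the authors state this corollary as a direct application of Lemma \ref{lem:c} with no further argument, and your proposal supplies precisely the intended details, namely transferring the Banach property through the wavelet isomorphism onto the sequence space defined by (\ref{eq:c}) and noting that (\ref{eq:c}) contains no reference to $\varphi$. One small caveat: the final step of completeness (showing $a^{(\nu)}\to a^{(\infty)}$ in norm) is more naturally a second application of Fatou to $\|a^{(\nu)}-a^{(\mu)}\|$ as $\mu\to\infty$ than a dominated-convergence argument, since the outer supremum over cubes has no integrable majorant, but this is a terminological quibble rather than a gap.
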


Now we recall some preliminaries on the Calder\'on-Zygmund
operators (cf. \cite{Me} and \cite{MY}). For $x\neq y$, let $K(x,y)$
be a smooth function such that there exists a sufficiently large
$N_{0}\leq m$ satisfying that
\begin{equation}\label{eq6}
\begin{array}{rl}
&|\partial ^{\alpha}_{x}\partial ^{\beta}_{y} K(x,y)| \lesssim
|x-y|^{-(n+|\alpha|+|\beta|)}\ \ \forall\ \ |\alpha|+ |\beta|\leq N_{0}.
\end{array}
\end{equation}

A linear operator
$$\begin{array}{rl}
&Tf(x)=\int K(x,y) f(y) dy\end{array}$$
is said to be a Calder\'on-Zygmund one if it is continuous from
$C^{1}(\mathbb{R}^{n})$ to $(C^{1}(\mathbb{R}^{n}))'$, where the
kernel $K(\cdot,\cdot)$ satisfies (\ref{eq6}) and
$$
Tx^{\alpha}=T^{*}x^{\alpha}=0\ \ \forall\ \ \alpha \in \mathbb{N}^{n}\ \ \hbox{with}\ \
|\alpha|\leq N_{0}.
$$
For such an operator, we denote $T\in
CZO(N_{0})$.

The kernel $K(\cdot,\cdot)$ may have high singularity on the diagonal $x=y$,
so according to the Schwartz kernel theorem, it is only a distribution in $S'(\mathbb{R}^{2n})$.
For $ (\epsilon,j,k), (\epsilon',j',k')\in \Lambda_{n}$, let
$$a^{\epsilon,\epsilon'}_{j,k,j',k'}= \langle K(x,y),
\Phi^{\epsilon}_{j,k}(x) \Phi^{\epsilon'}_{j',k'}(y)\rangle.$$
 If
$T$ is a Calder\'on-Zygmund operator, then its kernel $K(\cdot,\cdot)$ and
the related coefficients satisfy the following relations (cf.
\cite{Me}, \cite{MY} and \cite{Yang1}):

\begin{lemma}\label{le8}

\item{\rm(i)} If $T\in CZO(N_{0})$, then the coefficients
$a^{\epsilon,\epsilon'}_{j,k,j',k'}$ satisfy
\begin{equation}\label{eq7}
|a^{\epsilon,\epsilon'}_{j,k,j',k'}| \lesssim
\frac{
\Big(\frac{2^{-j}+2^{-j'}}{2^{-j}+2^{-j'}
+|k2^{-j}-k'2^{-j'}|}\Big)^{n+N_{0}}}{2^{|j-j'|(\frac{n}{2}+N_{0})}
}\ \forall\ (\epsilon,j,k), (\epsilon',j',k')\in \Lambda_{n}.
\end{equation}

\item{\rm(ii)} If $a^{\epsilon,\epsilon'}_{j,k,j',k'}$ satisfy (\ref{eq7}),
then $K(\cdot,\cdot)$, the kernel of the operator $T$, can be written as
$$\begin{array}{rl}
&K(x,y)=\sum\limits_{ (\epsilon,j,k),(\epsilon',j',k')\in
\Lambda_{n}} a^{\epsilon,\epsilon'}_{j,k,j',k'}\Phi^{\epsilon}_{j,k}(x)
\Phi^{\epsilon'}_{j',k'}(y)
\end{array}$$
in the distribution sense. Moreover, $T$ belongs to $CZO(N_{0}-\delta)$ for any small positive number $\delta$.
\end{lemma}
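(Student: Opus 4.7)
The plan is to follow Meyer's classical wavelet approach to Calder\'on-Zygmund operators \cite{Me,MY}, keeping careful track of the regularity level $N_0$. Part (i) is a direct estimate of the coefficients using the structural properties of $T$, while part (ii) reconstructs the kernel from its wavelet expansion and reads off its Calder\'on-Zygmund character.

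For part (i), I would assume without loss of generality $j\geq j'$ (by the symmetry between $T$ and $T^{*}$) and split the analysis according to whether the wavelet centers $x_{j,k}:=2^{-j}k$ and $y_{j',k'}:=2^{-j'}k'$ are well separated. In the \emph{distant regime} $|x_{j,k}-y_{j',k'}|\geq C(2^{-j}+2^{-j'})$, Taylor expand $K(x,y)$ in $y$ about $y_{j',k'}$ up to order $N_0$; the polynomial part is annihilated by the vanishing moments of $\Phi^{\epsilon'}_{j',k'}$ (legitimate since $\epsilon'\in E_n$), and the remainder is controlled by (\ref{eq6}). A second Taylor expansion in $x$ about $x_{j,k}$, killed by the vanishing moments of $\Phi^{\epsilon}_{j,k}$, produces the scaling factor $2^{-|j-j'|(n/2+N_0)}$ together with the off-diagonal factor $(2^{-j}+2^{-j'}+|x_{j,k}-y_{j',k'}|)^{-(n+N_0)}$. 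In the \emph{close regime} $|x_{j,k}-y_{j',k'}|\leq C(2^{-j}+2^{-j'})$ the off-diagonal kernel bounds are unavailable, so I would invoke the cancellations $T^{*}x^{\alpha}=0$ for $|\alpha|\leq N_0$: subtracting from $\Phi^{\epsilon'}_{j',k'}$ its order-$N_0$ Taylor polynomial at $x_{j,k}$ leaves a function whose image under $T$ is controlled by the continuity $T:C^{1}\to (C^{1})'$, while the polynomial part pairs to zero against $T^{*}\Phi^{\epsilon}_{j,k}$. Since the distance factor in (\ref{eq7}) is of order unity here, the desired bound follows.

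For part (ii), define
\begin{equation*}
K(x,y)=\sum_{(\epsilon,j,k),(\epsilon',j',k')\in\Lambda_n} a^{\epsilon,\epsilon'}_{j,k,j',k'}\Phi^{\epsilon}_{j,k}(x)\Phi^{\epsilon'}_{j',k'}(y)
\end{equation*}
and verify distributional convergence, off-diagonal smoothness at level $N_0-\delta$, and the cancellation conditions in turn. Convergence in $\mathcal{S}'(\mathbb{R}^{2n})$ follows by pairing with tensor-product test functions and using the rapid decay $2^{-|j-j'|(n/2+N_0)}$ from (\ref{eq7}) to dominate the double sum absolutely. The derivative estimate is obtained by differentiating term by term (costing $2^{j|\alpha|+j'|\beta|}$ per term), grouping the indices $(k,k')$ according to dyadic distance from the evaluation point $(x,y)$, and summing geometrically in $j$ and $j'$. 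The cancellations $Tx^{\alpha}=T^{*}x^{\alpha}=0$ for $|\alpha|\leq N_0-\delta$ then follow directly from $\int x^{\alpha}\Phi^{\epsilon}(x)\,dx=0$ for $\epsilon\in E_n$ and $|\alpha|\leq m$.

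The principal technical obstacle is the derivative bound in part (ii): the factors $2^{j|\alpha|+j'|\beta|}$ produced by differentiation must be balanced against the almost-diagonal decay (\ref{eq7}) uniformly in $(x,y)$, and it is precisely here that one is forced to surrender an arbitrarily small positive amount $\delta$ of regularity in order to absorb the logarithmic divergences arising in borderline cases of the double summation over scales.
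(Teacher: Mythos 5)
The paper does not prove Lemma \ref{le8}: it is recalled as known, with the reader referred to Meyer \cite{Me}, Meyer--Yang \cite{MY}, and Yang \cite{Yang1}, so there is no in-paper argument to compare against. Your sketch correctly identifies Meyer's wavelet route as the natural one, but two steps would fail as written.

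In the distant regime of part (i) you Taylor-expand $K(x,y)$ to order $N_0$ in $y$ and then again to order $N_0$ in $x$. The first remainder already spends $N_0$ derivatives in $y$, and (\ref{eq6}) only controls $\partial_x^{\alpha}\partial_y^{\beta}K$ for $|\alpha|+|\beta|\leq N_0$, so the second expansion has no usable kernel bound. The correct move is to expand in the finer-scale variable only: with $j\geq j'$, expand $K$ to order $N_0$ in $x$ around $x_{j,k}:=2^{-j}k$, annihilate the polynomial part via $\int x^{\alpha}\Phi^{\epsilon}_{j,k}\,dx=0$, bound the remainder on the supports by $2^{-jN_0}|x_{j,k}-y_{j',k'}|^{-(n+N_0)}$, and integrate against $|\Phi^{\epsilon}_{j,k}|$ and $|\Phi^{\epsilon'}_{j',k'}|$; the $L^1$-normalizations $\approx 2^{-nj/2}$ and $\approx 2^{-nj'/2}$ combined with $2^{-jN_0}$ give $2^{-jN_0-n(j+j')/2}$, which is precisely $(2^{-j}+2^{-j'})^{n+N_0}\cdot 2^{-|j-j'|(n/2+N_0)}$ up to constants, matching (\ref{eq7}). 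The close regime has a similar flaw: $\Phi^{\epsilon'}_{j',k'}-P$, with $P$ its Taylor polynomial at $x_{j,k}$, is not a compactly supported $C^1$ function (the polynomial grows at infinity), so the $T:C^1\to(C^1)'$ continuity cannot be applied to it directly; one must first localize with a cutoff at scale $2^{-j}$ near $x_{j,k}$, apply the weak boundedness and $Tx^{\alpha}=0$ to the near piece, and handle the cut-off tail with the off-diagonal kernel bound as in the distant case. Your outline of part (ii) is sound in structure and correctly locates where the $\delta$-loss arises, though a complete argument should exhibit explicitly how the differentiation factors $2^{j|\alpha|+j'|\beta|}$ are absorbed against the decay in (\ref{eq7}) and the lattice sums in $k,k'$.
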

By Lemma \ref{le8}, we can prove that
\begin{corollary} \label{cor:dilation}
For any $\frac{1}{2}\leq \lambda \leq 2$ we have $\|f(\lambda
\cdot)\|_{\dot{B}^{\gamma_{1},\gamma_{2}}_{p,q}} \approx
\|f\|_{\dot{B}^{\gamma_{1},\gamma_{2}}_{p,q}}$.
\end{corollary}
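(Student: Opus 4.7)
The plan is to identify the dilation map $D_{\lambda}:f\mapsto f(\lambda\cdot)$ as a Calder\'on-Zygmund type operator whose wavelet matrix satisfies the decay estimate (\ref{eq7}) of Lemma \ref{le8}, uniformly for $\lambda\in[1/2,2]$, and then to read off boundedness on $\dot{B}^{\gamma_{1},\gamma_{2}}_{p,q}$ directly from the wavelet characterization provided by Lemma \ref{lem:c}(ii).

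Expanding $f=\sum_{(\epsilon,j,k)\in\Lambda_n}a^{\epsilon}_{j,k}\Phi^{\epsilon}_{j,k}$ in a Meyer basis, the Meyer wavelet coefficients of $D_{\lambda}f$ are
\begin{equation*}
(D_{\lambda}f)^{\epsilon'}_{j',k'}=\sum_{(\epsilon,j,k)\in\Lambda_n} a^{\epsilon}_{j,k}\, c^{\epsilon,\epsilon'}_{j,k,j',k'},\qquad c^{\epsilon,\epsilon'}_{j,k,j',k'}=\langle\Phi^{\epsilon}_{j,k}(\lambda\cdot),\Phi^{\epsilon'}_{j',k'}\rangle.
\end{equation*}
After the substitution $y=\lambda x$, the coefficient $c^{\epsilon,\epsilon'}_{j,k,j',k'}$ becomes (up to the normalisation $\lambda^{-n/2}$) the inner product of the Meyer wavelet $\Phi^{\epsilon}$ at dyadic scale $2^{-j}$ centred near $k2^{-j}$ with a translated copy of $\Phi^{\epsilon'}$ living at the perturbed scale $\lambda 2^{-j'}\in[2^{-j'-1},2^{-j'+1}]$ and centred near $\lambda k'2^{-j'}$. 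Since $\lambda\in[1/2,2]$, both the scale perturbation and the centre displacement are controlled by a single dyadic level.

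The heart of the argument is verifying that $\{c^{\epsilon,\epsilon'}_{j,k,j',k'}\}$ obeys the decay (\ref{eq7}) with constant independent of $\lambda\in[1/2,2]$. This is obtained by essentially the same integration-by-parts argument as in the proof of Lemma \ref{le9}, exploiting the smoothness and the vanishing moments of Meyer wavelets—both of which are of arbitrarily high order—while carrying the mild scale rescaling $\lambda$ through the estimates; the constants remain uniform because $\lambda$ stays in a compact subset of $(0,\infty)$. Once (\ref{eq7}) holds, the Besov-Q norm is characterised by the wavelet sum (\ref{eq:c}) (Lemma \ref{lem:c}(ii)), so we may estimate $\|D_{\lambda}f\|_{\dot{B}^{\gamma_{1},\gamma_{2}}_{p,q}}$ directly in terms of the coefficients $(D_{\lambda}f)^{\epsilon'}_{j',k'}$. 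A Schur-type summation—in which the geometric decay $2^{-|j-j'|(n/2+N)}$ dominates the scale weight $2^{jq(\gamma_{1}+n/2-n/p)}$ after choosing $N$ large relative to $|\gamma_{1}|+|\gamma_{2}|+n/p+n/q$, while the spatial decay controls the $\ell^{p}$-sums over dyadic cubes $Q_{j,k}\subset Q$—then yields $\|D_{\lambda}f\|_{\dot{B}^{\gamma_{1},\gamma_{2}}_{p,q}}\lesssim\|f\|_{\dot{B}^{\gamma_{1},\gamma_{2}}_{p,q}}$. The reverse inequality follows by applying the same argument to $D_{\lambda^{-1}}$, since $\lambda^{-1}\in[1/2,2]$ as well.

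The main obstacle I foresee is the non-dyadic character of $\lambda$: the family $\{\Phi^{\epsilon}_{j,k}(\lambda\cdot)\}$ is neither orthonormal nor lives at dyadic scales, so Lemma \ref{le9} cannot be invoked verbatim. The restriction $\lambda\in[1/2,2]$ is precisely what saves the argument—the scale mismatch between $2^{-j'}$ and $\lambda 2^{-j'}$ never exceeds a single dyadic level, and the centre perturbation $|k'2^{-j'}-\lambda k'2^{-j'}|$ is absorbed into the spatial decay factor of (\ref{eq7}) by choosing $N$ large enough that the extra multiplicative slack on $|k2^{-j}-k'2^{-j'}|$ costs only a constant. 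The resulting proof is therefore a careful, but essentially bookkeeping, extension of Lemma \ref{le9} to one non-dyadically rescaled family of wavelets.
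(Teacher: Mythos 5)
Your overall strategy — expand in Meyer wavelets, show that the matrix $c^{\epsilon,\epsilon'}_{j,k,j',k'}=\langle\Phi^{\epsilon}_{j,k}(\lambda\cdot),\Phi^{\epsilon'}_{j',k'}\rangle$ satisfies a Calder\'on--Zygmund type decay uniformly in $\lambda\in[1/2,2]$, and then read off boundedness from the wavelet characterization of Lemma \ref{lem:c} via Schur summation — is precisely the route the paper takes, since the corollary is stated immediately after the words ``By Lemma \ref{le8}, we can prove that.''

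One step in your verification of the decay estimate is not quite right, however. You claim the centre perturbation $|k'2^{-j'}-\lambda k'2^{-j'}|=|\lambda-1|\,|k'|2^{-j'}$ is ``absorbed into the spatial decay factor of (\ref{eq7}) by choosing $N$ large.'' Choosing $N$ large cannot help here: this quantity grows without bound as $|k'|\to\infty$, so the matrix $c$ does \emph{not} satisfy (\ref{eq7}) verbatim with the distance $|k2^{-j}-k'2^{-j'}|$; the genuine decay is around the shifted separation $|\lambda^{-1}k2^{-j}-k'2^{-j'}|$ (equivalently $|k2^{-j}-\lambda k'2^{-j'}|$), and these two quantities are not pointwise comparable. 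What rescues the argument is not the size of $N$ but the structure of the Besov-Q norm: it is a supremum over all cubes $Q$, the wavelet sum in (\ref{eq:c}) is over $Q_{j,k}\subset Q$, and a shift of centres by the bounded multiplicative factor $\lambda^{\pm1}\in[1/2,2]$ only replaces $Q$ by a comparable cube $\lambda Q$, which changes the supremum by at most a constant. With that correction the Schur argument and the reverse inequality via $D_{\lambda^{-1}}$ go through exactly as you describe, so the gap is minor and the approach is sound.
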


\subsection{Critical spaces and their inclusions}
\begin{definition}
An initial data space is called critical for (\ref{eqn:ns}), if it
is invariant under the scaling $f_{\lambda}(x) = \lambda^{2\beta-1}
f(\lambda x)$.
\end{definition}

Note that, if $u(t,x)$ is a solution of (\ref{eqn:ns}) and we replace
$u(t,x), p(t,x), a(x)$ by
$$u_{\lambda}(t,x) = \lambda^{2\beta-1}
u(\lambda^{2\beta}t, \lambda x),\ p_{\lambda}(t,x) =
\lambda^{4\beta-2} u(\lambda^{2\beta}t, \lambda x)$$ and
$a_{\lambda}(x) = \lambda^{2\beta-1} a(\lambda x),$ respectively,
$u_{\lambda}(t,x)$ is also a solution of (\ref{eqn:ns}). So, the critical spaces
occupy a significant place for (\ref{eqn:ns}). For
$\beta=1$,
$$
\begin{cases}
\dot{L}^{2}_{\frac{n}{2}-1}(\mathbb{R}^{n})=
\dot{B}^{-1+\frac{n}{2},2}_{2}(\mathbb{R}^{n});\\
L^{n}(\mathbb{R}^{n});\\
\dot{B}^{-1+\frac{n}{p},
\infty}_{p}(\mathbb{R}^{n}), p<\infty;\\
BMO^{-1}(\mathbb{R}^{n});\\
\dot{B}^{\alpha-1,\alpha}_{2,2}(\mathbb{R}^{n}),
\end{cases}
$$
are critical spaces. For the general $\beta$,
$$
\begin{cases}
\dot{B}^{1+\frac{n}{p}-2\beta,
\infty}_{p}(\mathbb{R}^{n}), p<\infty;\\
\dot{B}^{\alpha-\beta+1,\alpha+\beta}_{2,2}(\mathbb{R}^{n}),
\end{cases}
$$
are critical spaces.

By Corollary \ref{cor:dilation}, it is easy to see that each $\dot{B}^{\gamma_{1},\gamma_{2}}_{p,q}$ enjoys following dilation-invariance.
For $\beta>\frac{1}{2}$ and $\gamma_{1}-\gamma_{2} = 1-2\beta$,
each $\dot{B}^{\gamma_{1},\gamma_{2}}_{p,q}$ is a critical
space, i.e.,
$$\|\lambda^{\gamma_{2}-\gamma_{1}} f(\lambda
\cdot)\|_{\dot{B}^{\gamma_{1},\gamma_{2}}_{p,q}} \approx
\|f\|_{\dot{B}^{\gamma_{1},\gamma_{2}}_{p,q}}\quad\forall\quad \lambda>0.$$

To better understand why the Besov-Q spaces are larger
than many spaces cited in the introduction, we should observe the basic fact below.

\begin{lemma}\label{lem:2.9}
Given $1<p, q<\infty$ and $\gamma_{1},\gamma_{2}\in\mathbb{R}$.

\item{\rm(i)} If $q_{1}\leq q_{2}$, then
$\dot{B}^{\gamma_{1},\gamma_{2}}_{p,q_{1}}\subset
\dot{B}^{\gamma_{1},\gamma_{2}}_{p,q_{2}}$.

\item{\rm(ii)}
$\dot{B}^{\gamma_{1},\gamma_{2}}_{p,q}\subset
\dot{B}^{\gamma_{1}-\gamma_{2},\infty}_{\infty}(\mathbb R^n)$.

\item{\rm(iii)} Given $p_1\geq 1$. For $w=0,q_1=1$ or $w>0,
1\leq q_1\leq \infty$, one has
$\dot{B}^{\gamma_{1},
\gamma_{2}+w}_{p,q} \subset
\dot{B}^{\gamma_{1}-w,\gamma_{2}} _{\frac{p}{p_1},
\frac{q}{q_1}}$.
\end{lemma}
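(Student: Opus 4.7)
My plan is to prove the three statements via the wavelet characterization (\ref{eq:c}) given in Lemma \ref{lem:c}, which expresses the Besov-Q norm of $f = \sum a^{\epsilon}_{j,k} \Phi^{\epsilon}_{j,k}$ as a supremum over dyadic cubes $Q$ of a weighted $\ell^{p}(\epsilon,k)$–$\ell^{q}(j)$ norm of the wavelet coefficients whose supports $Q_{j,k}$ lie in $Q$. Part (i) is then immediate from the elementary $\ell^{q_1} \hookrightarrow \ell^{q_2}$ embedding applied to the $j$-summation inside the brackets of (\ref{eq:c}), since those brackets are $\ell^{q}$ norms of nonnegative numbers indexed by $j$ and the prefactor $|Q|^{\gamma_{2}/n - 1/p}$ is independent of $q$.

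For (ii), I would combine (\ref{eq:c}) with Theorem \ref{th2} specialized to $p = q = \infty$, which identifies $\|f\|_{\dot{B}^{\gamma_{1} - \gamma_{2}, \infty}_{\infty}}$ with $\sup_{(\epsilon, j, k)} 2^{j(\gamma_{1} - \gamma_{2} + n/2)} |f^{\epsilon}_{j,k}|$. For any index $(\epsilon_{0}, j_{0}, k_{0})$, I would take $Q = Q_{j_{0}, k_{0}}$ in (\ref{eq:c}), so that $|Q| = 2^{-n j_{0}}$, and retain inside the brackets only the single term at scale $j_{0}$ with $(\epsilon, k) = (\epsilon_{0}, k_{0})$. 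The prefactors collapse via
\begin{equation*}
2^{-j_{0}(\gamma_{2} - n/p)} \cdot 2^{j_{0}(\gamma_{1} + n/2 - n/p)} = 2^{j_{0}(\gamma_{1} - \gamma_{2} + n/2)},
\end{equation*}
so that a supremum over $(\epsilon_{0}, j_{0}, k_{0})$ yields the desired embedding.

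The substantive claim is (iii). I would fix a dyadic cube $Q$ of side length $r$, set $\tilde{p} = p/p_{1}$ and $\tilde{q} = q/q_{1}$, and evaluate the target norm via (\ref{eq:c}). Since $\tilde{p} \leq p$ and the number of indices $(\epsilon, k)$ with $Q_{j,k} \subset Q$ is $\lesssim 2^{nj} r^{n}$, Hölder's inequality in $(\epsilon, k)$ gives
\begin{equation*}
\Big(\sum_{Q_{j,k} \subset Q} |a^{\epsilon}_{j,k}|^{\tilde{p}}\Big)^{\tilde{q}/\tilde{p}} \lesssim (2^{nj} r^{n})^{\tilde{q}(p_{1}-1)/p} \Big(\sum_{Q_{j,k} \subset Q} |a^{\epsilon}_{j,k}|^{p}\Big)^{\tilde{q}/p}.
\end{equation*}
A direct bookkeeping of exponents then converts the inner weight $2^{j\tilde{q}(\gamma_{1} - w + n/2 - n/\tilde{p})}$ into $r^{n\tilde{q}(p_{1}-1)/p} \cdot 2^{-j\tilde{q} w} \cdot 2^{j\tilde{q}(\gamma_{1} + n/2 - n/p)}$. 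For $1 < q_{1} < \infty$, a second Hölder in $j$ with conjugate exponents $q_{1}$ and $q_{1}'$ (and $\tilde{q} q_{1} = q$) splits the $j$-sum into
\begin{equation*}
\Big(\sum_{j \geq j_{Q}} 2^{-j \tilde{q} w q_{1}'}\Big)^{1/q_{1}'} \cdot \Big(\sum_{j \geq j_{Q}} 2^{jq(\gamma_{1} + n/2 - n/p)} \bigl(\textstyle\sum |a^{\epsilon}_{j,k}|^{p}\bigr)^{q/p}\Big)^{1/q_{1}}.
\end{equation*}
For $w > 0$ the first factor is a convergent geometric tail comparable to $r^{w}$, and the second factor is controlled by $\|f\|_{\dot{B}^{\gamma_{1}, \gamma_{2}+w}_{p,q}}^{q/q_{1}}$. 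The remaining $r$-powers recombine with $|Q|^{\gamma_{2}/n - 1/\tilde{p}} = r^{\gamma_{2} - np_{1}/p}$ exactly to $r^{(\gamma_{2}+w) - n/p}$. The degenerate case $w = 0, q_{1} = 1$ is immediate because $\tilde{q} = q$ and no $j$-Hölder is needed; the endpoint cases $q_{1} \in \{1, \infty\}$ with $w > 0$ are handled by replacing Hölder by the obvious sup–sum pairing.

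I expect the main obstacle to be exactly the exponent bookkeeping in (iii): one must verify that the various $r$-powers and $2^{j}$-powers produced by the two Hölder steps and the geometric tail cancel precisely against the normalization $|Q|^{\gamma_{2}/n - 1/\tilde{p}}$. The Hölder applications themselves are routine, but any arithmetic slip in the exponents silently destroys the scaling that makes the embedding work.
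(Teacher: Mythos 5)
The paper states this lemma without proof, so there is no official argument to compare against; your proposal supplies a correct one, and it follows exactly the route the paper's machinery suggests. Part (i) is the $\ell^{q_1}\hookrightarrow\ell^{q_2}$ embedding applied inside the brackets of (\ref{eq:c}); part (ii) follows by testing (\ref{eq:c}) against a single dyadic cube $Q_{j_0,k_0}$ and matching against Theorem \ref{th2} with $p=q=\infty$; part (iii) is the two-step Hölder (first in $(\epsilon,k)$ using $\#\{(\epsilon,k):Q_{j,k}\subset Q\}\approx 2^{nj}|Q|$, then in $j$ with conjugates $q_1,q_1'$) and a geometric-tail estimate. I checked the exponent bookkeeping in (iii): with $\tilde p=p/p_1$, the $(\epsilon,k)$-Hölder factor $(2^{nj}r^n)^{\tilde q(p_1-1)/p}$ converts $2^{j\tilde q(\gamma_1-w+n/2-n/\tilde p)}$ into $2^{-j\tilde q w}2^{j\tilde q(\gamma_1+n/2-n/p)}$ times $r^{n\tilde q(p_1-1)/p}$, and after the $j$-Hölder the geometric tail contributes $r^w$, so the total $r$-power is $r^{\gamma_2-n/\tilde p+n(p_1-1)/p+w}=r^{(\gamma_2+w)-n/p}$, which is precisely $|Q|^{(\gamma_2+w)/n-1/p}$; the argument closes. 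One small caveat: for $q_1=\infty$ the target index $q/q_1$ degenerates to $0$, a gap in the lemma's own formulation rather than in your proof; your sup--sum remark covers the substantive content, though it would be worth stating explicitly that in that case one bounds $\sup_j$ of the summand by a constant times $r^{\tilde q w}$ and then appeals to the source norm for the remaining factor.
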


For $0\leq \alpha-\beta+1$ and $\alpha+\beta-1\leq \frac{n}{2}$,
we say that $f$ belongs to the Q-type space $Q_{\alpha}^{\beta}(\mathbb{R}^{n})$ provided
$$
\begin{array}{rl}
\sup\limits_{Q} r^{2(\alpha+\beta-1)-n} \int_{Q}\int_{Q}
\frac{|f(x)-f(y)|^{2}}{|x-y|^{n+2(\alpha-\beta+1)}} dxdy<\infty,
\end{array}$$
where the supremum is taken over all cubes with sidelength $r$. This definition was used in \cite{LZ} to extend the results in \cite{X} which initiated a PDE-analysis of the original Q-spaces introduced in \cite{EJPX} (cf. \cite{DX, DX1}, \cite{PY}, \cite{WX}, and \cite{Yang1} for more information). The following is a direct consequence of Lemmas \ref{lem:c} and
\ref{lem:2.9}.

\begin{corollary}\label{co:1}
\item{\rm(i)} If $0\leq \alpha-\beta+1< 1, \alpha+\beta-1\leq \frac{n}{2}$,
then $Q_{\alpha}^{\beta}(\mathbb{R}^{n})= \dot{B}^{\alpha-\beta+1,
\alpha+\beta-1}_{2,2}$.

\item{\rm(ii)} If $p=\frac{n}{\gamma_{2}}$, then $\dot{B}^{\gamma_{1},
\gamma_{2}}_{p,q}=\dot{B}^{\gamma_{1},q}_{p}(\mathbb{R}^{n})$.

\item{\rm(iii)} Given $w=0,v=1$ or $w>0, 1\leq v\leq \infty$. If
$p=\frac{n}{\gamma_{2}+w}$, then
$$\begin{array}{l}
\dot{B}^{\gamma_{1}, q}_{p}(\mathbb{R}^{n}) \subset
\dot{B}^{\gamma_{1}-w,\gamma_{2}} _{\frac{n}{u(w+\gamma_{2})},
\frac{q}{v}}.
\end{array}$$
\end{corollary}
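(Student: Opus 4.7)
\textbf{Proof proposal for Corollary \ref{co:1}.} The plan is to read the three statements off the wavelet characterization in Lemma~\ref{lem:c} combined with the inclusions in Lemma~\ref{lem:2.9}, treating (ii) as the key bridge that reduces (iii) to Lemma~\ref{lem:2.9}(iii) and handling (i) by matching the wavelet sum with the integral Gagliardo-type norm.

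For \textbf{(ii)}, I would specialize Lemma~\ref{lem:c} to the exponent $p=n/\gamma_{2}$, so that $|Q|^{\gamma_{2}/n-1/p}=1$ and the supremum over $Q$ in (\ref{eq:c}) acts only on the restriction $Q_{j,k}\subset Q$. One inclusion is immediate: enlarging the family of permitted $(\epsilon,j,k)$ by letting $|Q|\to\infty$ recovers the full wavelet Besov norm from Theorem~\ref{th2}, yielding $\dot{B}^{\gamma_{1},q}_{p}\supseteq\dot{B}^{\gamma_{1},\gamma_{2}}_{p,q}$. The reverse inclusion follows because any dyadic cube $Q$ contributes no more than the full sum $\sum_{(\epsilon,k)}|a^{\epsilon}_{j,k}|^{p}$, whose rearrangement gives the Besov wavelet norm. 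Thus under $p=n/\gamma_{2}$ the two norms agree.

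For \textbf{(iii)}, use (ii) with $\gamma_{2}$ replaced by $\gamma_{2}+w$: when $p=n/(\gamma_{2}+w)$, we have the identification $\dot{B}^{\gamma_{1},q}_{p}(\mathbb{R}^{n})=\dot{B}^{\gamma_{1},\gamma_{2}+w}_{p,q}$. Then apply Lemma~\ref{lem:2.9}(iii) (with the parameters $p_{1},q_{1}$ denoted in the statement by $u,v$), which gives $\dot{B}^{\gamma_{1},\gamma_{2}+w}_{p,q}\subset \dot{B}^{\gamma_{1}-w,\gamma_{2}}_{p/p_{1},\,q/q_{1}}$. Substituting $p=n/(\gamma_{2}+w)$ into the target index yields the claimed inclusion.

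For \textbf{(i)}, I would apply Lemma~\ref{lem:c} with $p=q=2$, $\gamma_{1}=\alpha-\beta+1$, $\gamma_{2}=\alpha+\beta-1$. Since $n/2-n/p=0$, condition (\ref{eq:c}) simplifies to
\[
\sup_{Q}\,|Q|^{\tfrac{2(\alpha+\beta-1)-n}{2n}}\Big[\sum_{nj\geq-\log_{2}|Q|}2^{2j(\alpha-\beta+1)}\!\!\sum_{Q_{j,k}\subset Q}|a^{\epsilon}_{j,k}|^{2}\Big]^{1/2}<\infty,
\]
which is exactly the wavelet description of $Q_{\alpha}^{\beta}$ under the hypotheses $0\le\alpha-\beta+1<1$ and $\alpha+\beta-1\leq n/2$ (so that all the exponents stay in their natural range and the polynomial-mod-out in $S^{\gamma_{1},\gamma_{2}}_{2,2,f}$ is trivial enough to be absorbed). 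The matching of this wavelet expression with the integral defining $Q_{\alpha}^{\beta}$ is standard: expand $f(x)-f(y)$ in the wavelet basis, use the support/cancellation properties to localize the sum to $Q_{j,k}\subset Q$ up to the fast-decaying off-diagonal tails controlled by Lemma~\ref{le9}, and then compute the double integral of $|x-y|^{-(n+2(\alpha-\beta+1))}$ against the pair of wavelets.

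The main obstacle will be (i): one must ensure that the restriction $0\leq \alpha-\beta+1<1$ is exactly what makes the Gagliardo seminorm equivalent to the wavelet sum (if $\alpha-\beta+1\geq 1$ one would need higher-order differences rather than the bare $|f(x)-f(y)|^{2}$), and that the side condition $\alpha+\beta-1\leq n/2$ keeps the cube normalization non-degenerate. The reductions in (ii) and (iii) are essentially book-keeping once the wavelet characterization of Lemma~\ref{lem:c} is in hand.
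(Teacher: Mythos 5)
Your reading of the paper's intent is right: (ii) follows from Lemma \ref{lem:c} once $p=n/\gamma_{2}$ kills the prefactor $|Q|^{\gamma_{2}/n-1/p}$ and the cube-restricted sums saturate the full Besov norm of Theorem \ref{th2} as $|Q|\to\infty$, and (iii) is precisely (ii) applied with $\gamma_{2}\mapsto\gamma_{2}+w$ followed by Lemma \ref{lem:2.9}(iii) (the paper's unexplained ``$u$'' is the $p_{1}$ of that lemma). The one caveat is in (i): the equivalence between the Lemma \ref{lem:c} wavelet condition for $\dot B^{\alpha-\beta+1,\alpha+\beta-1}_{2,2}$ and the Gagliardo double-integral norm defining $Q^{\beta}_{\alpha}$ is not itself a consequence of Lemmas \ref{lem:c} and \ref{lem:2.9}; you correctly identify the restriction $0\le\alpha-\beta+1<1$ as what makes the first-order difference suffice and sketch the wavelet-expansion argument, but like the paper you ultimately lean on the wavelet characterization of $Q^{\beta}_{\alpha}$ imported from the Q-space literature (\cite{Yang1}, \cite{WX}, \cite{DX}) rather than proving it.
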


\begin{remark}\label{remark1}
In \cite{W2}, J. Wu got the well-posedness of
(\ref{eqn:ns}) with an initial data in the critical Besov space
$\dot{B}^{1+\frac{n}{p}-2\beta,q}_{p}(\mathbb{R}^{n})$. Given
$1<p_{0},q_{0}<\infty$.  By Lemma \ref{lem:2.9}, we can see that if
$1<p\leq p_{0}$, $1<q\leq q_{0}$ and $\beta>0$,
$$\dot{B}^{1+\frac{n}{p}-2\beta,q}_{p}(\mathbb{R}^{n})\subset
\dot{B}^{1+\frac{n}{p}-2\beta,\frac{n}{p}}_{p_{0},q_{0}}.$$
\end{remark}

\section{Besov-Q spaces via semigroups}\label{sec4}

To establish a semigroup characterization of the
Besov-Q spaces, recall the following semigroup characterization of $Q^{\beta}_{\alpha}(\mathbb{R}^{n})$, see \cite{LZ}:

{\it Given $\max\{\alpha,1/2\}<\beta<1$ and
$\alpha+\beta-1\geq0$. $f\in Q^{\beta}_{\alpha}(\mathbb{R}^{n})$
if and only if
$$\begin{array}{rl}
\sup\limits_{x\in\mathbb R^n\ \&\ r\in (0,\infty)}
r^{2\alpha-n+2\beta-2}\int^{r^{2\beta}}_{0} \int_{|y-x|<r} |\nabla
e^{-t(-\Delta)^{\beta}} f(y)|^{2} t^{-\frac{\alpha}{\beta}}
dydt<\infty.
\end{array}$$}

This characterization was used to derive the global existence and uniqueness of a mild solution to (\ref{eqn:ns}) with a small initial data in
$\nabla\cdot(Q^{\beta}_{\alpha}(\mathbb{R}^{n}))^{n}$. Notice that
$$
Q^{\beta}_{\alpha}(\mathbb{R}^{n})=\dot{B}^{\alpha+\beta-1,\alpha-\beta+1}_{2,2}.
$$
So, in order to get the corresponding result of (\ref{eqn:ns}) with a
small initial data in
$\dot{B}^{\gamma_{1},\gamma_{2}}_{p,q}$ under $|p-2|+
|q-2|\neq 0$, we need a more meticulous relation among time, frequency
and locality. For this purpose, by the Meyer wavelets and the
fractional heat semigroups, we introduce some new tent spaces
associated with
$\dot{B}^{\gamma_{1},\gamma_{2}}_{p,q}$, and then
establish some connections between these tent spaces and
$\dot{B}^{\gamma_{1},\gamma_{2}}_{p,q}$.

\subsection{Wavelets and semigroups}

For $\beta>0$, let $\hat{K}^{\beta}_{t} (\xi) =
e^{-t|\xi|^{2\beta}}$.
%and $\phi^{\beta}_{t}(x) =
%t^{-\frac{n}{2\beta}}\phi(t^{-\frac{1}{2\beta}} x)$ with the Fourier
%transform $\hat{\phi}^{\beta}_{t}(\xi)=
%\hat{\phi}(t^{\frac{1}{2\beta}}\xi)$.
 We have
\begin{equation*}
f(t,x)= e^{-t(-\Delta)^{\beta}} f(x) = K_{t}^{\beta}*f(x).
\end{equation*}

 For the Meyer wavelets
$\{\Phi^{\epsilon}_{j,k}\}_{(\epsilon,j,k)\in\Lambda_{n}}$, let
$a^{\epsilon}_{j,k}(t) = \langle f(t,\cdot),
\Phi^{\epsilon}_{j,k}\rangle$
 and $a^{\epsilon}_{j,k} =
\langle f, \Phi^{\epsilon}_{j,k}\rangle$. By Lemma \ref{le1} we get
$$
\begin{array}{rl}
&f(x)= \sum\limits_{\epsilon,j,k} a^{\epsilon}_{j,k}
\Phi^{\epsilon}_{j,k}(x)\ \text{ and }\  f(t,x)=
\sum\limits_{\epsilon, j, k} a^{\epsilon}_{j,k}(t)
\Phi^{\epsilon}_{j,k}(x).
\end{array}
$$

If $f(t,x)=K^{\beta}_{t}*f(x)$, then

\begin{equation}\label{eq3}
\begin{array}{rl}
a^{\epsilon}_{j,k}(t) &= \sum\limits_{\epsilon',|j-j'|\leq 1, k'}
a^{\epsilon'}_{j',k'} \langle K^{\beta}_{t}
\Phi^{\epsilon'}_{j',k'},
\Phi^{\epsilon}_{j,k}\rangle\nonumber\\
&= \sum\limits_{\epsilon',|j-j'|\leq 1, k'} a^{\epsilon'}_{j',k'}
\int e^{-t 2^{2j\beta} |\xi|^{2\beta}} \hat{\Phi}^{\epsilon'}
(2^{j-j'}\xi) \hat{\Phi}^{\epsilon} (\xi) e^{-i(2^{j-j'}k'-k)\xi}
d\xi.\nonumber
\end{array}
\end{equation}

\begin{lemma}\label{le4} Let $\{\Phi^{\epsilon}_{j,k}\}_{(\epsilon,j,k)\in\Lambda_{n}}$ be Meyer wavelets.
For $\beta>0$ there exist a large constant $N_{\beta}>0$ and a small constant $\tilde c>0$ such that if $N>N_{\beta}$ then
\begin{equation}\label{eq3.5}
\begin{array}{l}
|a^{\epsilon}_{j,k}(t)|\lesssim e^{-\tilde c t 2^{2j\beta}}
\sum\limits_{\epsilon',|j-j'|\leq 1, k'} |a^{\epsilon'}_{j',k'}|
(1+|2^{j-j'}k'-k|)^{-N}\quad\forall\quad t 2^{2\beta j} \geq 1
\end{array} \end{equation}
and
\begin{equation}\label{eq3.6}
\begin{array}{rl}|a^{\epsilon}_{j,k}(t)|\lesssim  \sum\limits_{|j-j'|\leq 1}
\sum\limits_{\epsilon',k'} |a^{\epsilon'}_{j',k'}|
(1+|2^{j-j'}k'-k|)^{-N}\quad\forall\quad 0< t 2^{2\beta j} \leq 1.
\end{array}
\end{equation}
\end{lemma}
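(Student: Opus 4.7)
\textbf{Proof plan for Lemma \ref{le4}.}
My starting point is the explicit formula for $a^{\epsilon}_{j,k}(t)$ expressed in the excerpt, namely
\[
 a^{\epsilon}_{j,k}(t) = \sum_{\epsilon',\,|j-j'|\le 1,\,k'} a^{\epsilon'}_{j',k'}\,I^{\epsilon,\epsilon'}_{j,k,j',k'}(t),
\]
where
\[
 I^{\epsilon,\epsilon'}_{j,k,j',k'}(t)=\int e^{-t 2^{2j\beta}|\xi|^{2\beta}}\,\hat{\Phi}^{\epsilon'}(2^{j-j'}\xi)\,\hat{\Phi}^{\epsilon}(\xi)\,e^{-i(2^{j-j'}k'-k)\cdot\xi}\,d\xi.
\]
The whole lemma reduces to showing that $|I^{\epsilon,\epsilon'}_{j,k,j',k'}(t)| \lesssim e^{-\tilde c\, t\, 2^{2j\beta}}(1+|2^{j-j'}k'-k|)^{-N}$ when $t 2^{2j\beta}\ge 1$ and $|I| \lesssim (1+|2^{j-j'}k'-k|)^{-N}$ when $0<t 2^{2j\beta}\le 1$. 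The restriction $|j-j'|\le 1$ in the sum already reflects the fact that $\hat\Phi^{\epsilon}$, $\hat\Phi^{\epsilon'}$ are supported in a fixed annulus bounded away from $0$; otherwise the product $\hat\Phi^{\epsilon'}(2^{j-j'}\xi)\hat\Phi^{\epsilon}(\xi)$ vanishes identically.

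The spatial decay comes from integration by parts against the oscillation $e^{-i(2^{j-j'}k'-k)\cdot\xi}$. Using that $(I-\Delta_\xi)e^{-i(2^{j-j'}k'-k)\cdot\xi}=(1+|2^{j-j'}k'-k|^2)e^{-i(2^{j-j'}k'-k)\cdot\xi}$ and integrating by parts $N$ times, I obtain
\[
 |I^{\epsilon,\epsilon'}_{j,k,j',k'}(t)|\le (1+|2^{j-j'}k'-k|)^{-2N}\int \bigl|(I-\Delta_\xi)^N\!\bigl[e^{-t 2^{2j\beta}|\xi|^{2\beta}}\hat\Phi^{\epsilon'}(2^{j-j'}\xi)\hat\Phi^{\epsilon}(\xi)\bigr]\bigr|\,d\xi.
\]
The integrand is supported in a fixed compact annulus independent of $j,j'$ (because $|j-j'|\le 1$), so the $L^1$-norm is controlled by the $L^\infty$-norm up to a fixed constant. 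On this annulus $|\xi|$ stays bounded below and above, so derivatives of $|\xi|^{2\beta}$ of every order are uniformly bounded, and the $2^{j-j'}$-factors produced by differentiating $\hat\Phi^{\epsilon'}(2^{j-j'}\xi)$ are harmless since $|j-j'|\le 1$.

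The only nontrivial point is to bound $|(I-\Delta_\xi)^N e^{-s|\xi|^{2\beta}}|$ uniformly on the annulus, where I set $s:=t 2^{2j\beta}$. By Faà di Bruno, any such derivative is a finite linear combination of terms of the form $e^{-s|\xi|^{2\beta}}\,s^{m}\,R(\xi)$ with $0\le m\le 2N$ and $R$ bounded on the annulus. For the small-time regime $s\le 1$ one has $s^m\le 1$ and $e^{-s|\xi|^{2\beta}}\le 1$, which yields (\ref{eq3.6}). For $s\ge 1$, set $c_0:=(2\pi/3)^{2\beta}$ (a positive lower bound for $|\xi|^{2\beta}$ on the annulus) and split
\[
 e^{-s|\xi|^{2\beta}}s^m = \bigl(e^{-\tfrac12 s|\xi|^{2\beta}}s^m\bigr)\cdot e^{-\tfrac12 s|\xi|^{2\beta}}\le \bigl(e^{-\tfrac{c_0}{2}s}s^m\bigr)\cdot e^{-\tfrac{c_0}{2}s}\lesssim e^{-\tilde c\, s}
\]
for some $\tilde c\in(0,c_0/2)$, because $s^m e^{-(c_0/2)s}$ is bounded on $[1,\infty)$. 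This delivers the exponential factor $e^{-\tilde c\,t 2^{2j\beta}}$ in (\ref{eq3.5}). Replacing $2N$ by $N$ (for any chosen $N$, take $2N$ integrations by parts) and summing over the finite range of $j'$ and $\epsilon'$ completes the proof.

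The main technical point is the splitting of $e^{-s|\xi|^{2\beta}}$ used to absorb the polynomial factors $s^m$ produced by differentiating the heat multiplier; once this is handled the rest is standard Meyer-wavelet bookkeeping together with the compact support of $\hat\Phi^{\epsilon}$.
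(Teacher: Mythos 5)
Your proof is correct and follows essentially the same route as the paper's: write $a^{\epsilon}_{j,k}(t)$ as a finite sum against the heat multiplier, integrate by parts to produce the spatial decay factor $(1+|2^{j-j'}k'-k|)^{-N}$, and absorb the polynomial-in-$s$ factors produced by differentiating $e^{-s|\xi|^{2\beta}}$ into the exponential when $s=t2^{2j\beta}\ge 1$. The one cosmetic difference is that you integrate by parts with $(I-\Delta_\xi)^N$, which handles the case $|2^{j-j'}k'-k|$ small or large uniformly, whereas the paper singles out the largest coordinate $l_{i_0}$ of $2^{j-j'}k'-k$, applies $\partial_{\xi_{i_0}}^N$, and therefore has to split into the subcases $|2^{j-j'}k'-k|\le 2$ and $\ge 2$ (and further subcases on $|k'-8k''|$ when the same technique is reused in Lemma \ref{le6}). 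Your version avoids these case splits and is slightly cleaner, but the estimates obtained and the mechanism (compact annular support of $\hat\Phi^\epsilon$ bounding $|\xi|$ away from $0$, Faà di Bruno/Leibniz to control derivatives of the multiplier, $(t2^{2j\beta})^m e^{-c_0 t2^{2j\beta}}\lesssim e^{-\tilde c t 2^{2j\beta}}$ for $t2^{2j\beta}\ge 1$) are the same.
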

\begin{proof}
Formally, we can write
$$\begin{array}{rl}
a^{\epsilon}_{j,k}(t) =&\sum\limits_{\epsilon',|j-j'|\leq 1,
k'}a^{\epsilon'}_{j',k'}\Big\langle
K^{\beta}_{t}\ast\Phi^{\epsilon'}_{j',k'},
\Phi^{\epsilon}_{j,k}\Big\rangle\\
=&\sum\limits_{\epsilon',|j-j'|\leq 1,
k'}a^{\epsilon'}_{j',k'}\Big\langle
e^{-t(-\Delta)^{\beta}}\Phi^{\epsilon'}_{j',k'},
\Phi^{\epsilon}_{j,k}\Big\rangle\\
=&\sum\limits_{\epsilon',|j-j'|\leq 1,
k'}a^{\epsilon'}_{j',k'}\int e^{-t2^{2j\beta}|\xi|^{2\beta}}\widehat{\Phi^{\epsilon'}}(2^{j-j'}\xi)\widehat{\Phi^{\epsilon}}(\xi)e^{-i(2^{j-j'}k'-k)\xi}d\xi.
\end{array}$$
We divide the rest of the argument into two cases.

{\it Case 1:} $|2^{j-j'}k'-k|\le 2$. Notice that $\widehat{\Phi^{\epsilon}}$ is supported on a ring. By a direct computation, we can get
$$\begin{array}{rl}
|a^{\epsilon}_{j,k}(t)|\lesssim&\sum\limits_{\epsilon',|j-j'|\leq 1,
k'}|a^{\epsilon'}_{j',k'}|e^{-t2^{2j\beta}}\\
\lesssim&\sum\limits_{\epsilon',|j-j'|\leq 1,
k'}\frac{|a^{\epsilon'}_{j',k'}|}{(1+|2^{j-j'}k'-k|)^{N}}e^{-t2^{2j\beta}}.
\end{array}$$

{\it Case 2:} $|2^{j-j'}k'-k|\geq 2$. Denote by $l_{i_{0}}$ the largest component of $2^{j-j'}k'-k$. Then $(1+|l_{i_{0}}|)^{N}\sim (1+|2^{j-j'}k'-k|)^{N}$. We have
$$\begin{array}{rl}
a^{\epsilon}_{j,k}(t)=&\sum\limits_{\epsilon',|j-j'|\leq 1,
k'}\frac{a^{\epsilon'}_{j',k'}}{(l_{i_{0}})^{N}}\int e^{-t2^{2j\beta}|\xi|^{2\beta}}\widehat{\Phi^{\epsilon'}}(2^{j-j'}\xi)\widehat{\Phi^{\epsilon}}(\xi)\\
&\times[(\frac{-1}{i}\partial_{\xi_{i_{0}}})^{N}e^{-i(2^{j-j'}k'-k)\xi}]d\xi.
\end{array}$$
By an integration-by-parts, we can obtain that if $C^l_N$ is the binomial coefficient indexed by $N$ and $l$ then
$$\begin{array}{rl}
|a^{\epsilon}_{j,k}(t)|=&\Big|\sum\limits_{\epsilon',|j-j'|\leq 1,
k'}(-1)^{N}\frac{a^{\epsilon'}_{j',k'}}{(l_{i_{0}})^{N}}\int \sum\limits^{N}_{l=0}C^{l}_{N}\partial_{\xi_{i_{0}}}^{l}(e^{-t2^{2j\beta}|\xi|^{2\beta}})\\
&\times\partial_{\xi_{i_{0}}}^{N-l}(\widehat{\Phi^{\epsilon'}}(2^{j-j'}\xi)\widehat{\Phi^{\epsilon}}(\xi))e^{-i(2^{j-j'}k'-k)\xi}d\xi\Big|\\
\lesssim&\sum\limits_{\epsilon',|j-j'|\leq 1,
k'}\frac{|a^{\epsilon'}_{j',k'}|}{(1+|2^{j-j'}k'-k|)^{N}}\Big|\int \sum\limits^{N}_{l=0}C^{l}_{N}(-t2^{2j\beta}|\xi|^{2\beta})^{l}|\xi|^{2\beta-2}\xi_{i_{0}}e^{-t2^{2j\beta}|\xi|^{2\beta}}\\
&\times\partial_{\xi_{i_{0}}}^{N-l}(\widehat{\Phi^{\epsilon'}}(2^{j-j'}\xi)\widehat{\Phi^{\epsilon}}(\xi))e^{-i(2^{j-j'}k'-k)\xi}d\xi\Big|.
\end{array}$$
If $t2^{2j\beta}\ge1$, there exists a constant $c$ such that
$(t2^{2j\beta})^{l}e^{-t2^{2j\beta}}\lesssim e^{-ct2^{2j\beta}}.$
Since $\widehat{\Phi^{\epsilon'}}$ is defined on a ring, we can get
$$\begin{array}{rl}
|a^{\epsilon}_{j,k}(t)|\lesssim&\sum\limits_{\epsilon',|j-j'|\leq 1,
k'}\frac{|a^{\epsilon'}_{j',k'}|}{(1+|2^{j-j'}k'-k|)^{N}}(t2^{2j\beta})^{l}e^{-t2^{2j\beta}}\\
\lesssim&\sum\limits_{\epsilon',|j-j'|\leq 1,
k'}e^{-ct2^{2j\beta}}\frac{|a^{\epsilon'}_{j',k'}|}{(1+|2^{j-j'}k'-k|)^{N}}.
\end{array}$$
If $0<t2^{2j\beta}\le1$, then we can directly deduce
$$\begin{array}{rl}
|a^{\epsilon}_{j,k}(t)|\lesssim\sum\limits_{\epsilon',|j-j'|\leq 1,
k'}\frac{|a^{\epsilon'}_{j',k'}|}{(1+|2^{j-j'}k'-k|)^{N}}.
\end{array}$$
\end{proof}

\subsection{Tent spaces generated by Besov-Q spaces} Over $\mathbb{R}^{1+n}_{+}$ we introduce a new
tent type space $\B^{\gamma_{1},\gamma_{2}}_{p,q,m,m'}$ associated
with $\dot{B}^{\gamma_{1},\gamma_{2}}_{p,q}$, and then establish a relation between
$\dot{B}^{\gamma_{1},\gamma_{2}}_{p,q}$ and
$\B^{\gamma_{1},\gamma_{2}}_{p,q,m,m'}$ via the fractional heat
semigroup $e^{-t(-\Delta)^{\beta}}$.

\begin{definition}
Let $\gamma_{1}, \gamma_{2},m\in \mathbb{R}$, $m' >0$, $1<p,
q<\infty$ and
$$
a(t,x)= \sum\limits_{(\epsilon,j,k)\in \Lambda_{n}}
 a^{\epsilon}_{j,k}(t) \Phi^{\epsilon}_{j,k}(x).
 $$ We say that:
\item{(i)} $f\in \B^{\gamma_{1},\gamma_{2},I}_{p,q ,m }$ if $\sup\limits_{t\geq 0} \sup\limits_{ x_{0},r}I^{\gamma_{1},\gamma_{2}}_{p, q,Q_{r},m}(t) <\infty,$ where
$$\begin{array}{rl}
I^{\gamma_{1},\gamma_{2}}_{p, q,Q_{r},m}(t)=:|Q_{r}
|^{\frac{q\gamma_{2}}{n}-\frac{q}{p}} \sum\limits_{j\geq \max\{
-\log_{2}r, \frac{-\log_{2}t}{2\beta}\} }
2^{jq(\gamma_{1}+\frac{n}{2}-\frac{n}{p})}\Big[\sum\limits_{(\epsilon,k):Q_{j,k}\subset Q_{r}}
|a^{\epsilon}_{j,k}(t)|^{p}(t2^{2j\beta})^{m}
\Big]^{\frac{q}{p}};
\end{array}$$

\item{(ii)} $f\in \B^{\gamma_{1},\gamma_{2},II}_{p, q}$ if
$\sup\limits_{t\geq 0} \sup\limits_{x_{0},r}
II^{\gamma_{1},\gamma_{2}}_{p, q, Q_{r}}(t)<\infty$, where
$$\begin{array}{rl}
II^{\gamma_{1},\gamma_{2}}_{p, q, Q_{r}}(t)=:|Q_{r}
|^{\frac{q\gamma_{2}}{n}-\frac{q}{p}} \sum\limits_{-\log_{2}r\leq
j<\frac{-\log_{2}t}{2\beta} }
2^{jq(\gamma_{1}+\frac{n}{2}-\frac{n}{p})}\Big[\sum\limits_{(\epsilon,k):Q_{j,k}\subset Q_{r}}
|a^{\epsilon}_{j,k}(t)|^{p}\Big]^{\frac{q}{p}};
\end{array}$$

\item{(iii)}  $f\in \B^{\gamma_{1},\gamma_{2},III}_{p, q, m }$ if
$\sup\limits_{x_{0},r} III^{\gamma_{1},\gamma_{2}}_{p, q,
Q_{r},m}<\infty$, where
$$\begin{array}{rl}
III^{\gamma_{1},\gamma_{2}}_{p, q, Q_{r},m} =|Q_{r}
|^{\frac{q\gamma_{2}}{n}-\frac{q}{p}} \sum\limits_{j\geq -\log_{2}r
} 2^{jq(\gamma_{1}+\frac{n}{2}-\frac{n}{p})}
\left(\int^{r^{2\beta}}_{2^{-2j\beta}} \sum\limits_{(\epsilon,k):Q_{j,k}\subset Q_{r}}
|a^{\epsilon}_{j,k}(t)|^{p}(t2^{2j\beta})^{m}
\frac{dt}{t}\right)^{\frac{q}{p}};
\end{array}$$

\item{(iv)} $f\in \B^{\gamma_{1},\gamma_{2},IV}_{p,
q, m' }$ if $\sup\limits_{x_{0},r} IV^{\gamma_{1},\gamma_{2}}_{p, q,
Q_{r},m'} <\infty$, where
$$\begin{array}{rl}
 IV^{\gamma_{1},\gamma_{2}}_{p, q,
Q_{r},m'}=:|Q_{r}
|^{\frac{q\gamma_{2}}{n}-\frac{q}{p}} \sum\limits_{j\geq -\log_{2}r
} 2^{jq(\gamma_{1}+\frac{n}{2}-\frac{n}{p})}
\left(\int^{2^{-2j\beta}}_{0} \sum\limits_{(\epsilon,k):Q_{j,k}\subset Q_{r}}
|a^{\epsilon}_{j,k}(t)|^{p}(t2^{2j\beta})^{m'}
\frac{dt}{t}\right)^{\frac{q}{p}}.
\end{array}$$
Moreover, the associated tent type spaces are defined as

$$\begin{array}{rl}\B^{\gamma_{1},\gamma_{2}}_{p,q,
m,m' }= \B^{\gamma_{1},\gamma_{2},I}_{p, q, m }\bigcap
\B^{\gamma_{1},\gamma_{2},II}_{p,q  }\bigcap
\B^{\gamma_{1},\gamma_{2},III}_{p, q, m }\bigcap
\B^{\gamma_{1},\gamma_{2},IV}_{p, q, m' }.
\end{array}$$
\end{definition}

To continue our discussion, we need to introduce two more function spaces
$\B^{\gamma}_{\tau,\infty}$ and $\B^{\gamma_{1}}_{\tau,\infty}$.

\begin{definition}\label{Besov-infinity}
  For $(\epsilon,j,k)\in \Lambda_{n}$, write
$a^{\epsilon}_{j,k}(t)=\langle a(t,\cdot),
\Phi^{\epsilon}_{j,k}(\cdot)\rangle$. Given $\tau>0$ and $\gamma\in\mathbb{R}$. We say that
$$
\begin{cases}
a(\cdot,\cdot)\in \B^{\gamma}_{\tau,\infty}\ \ \hbox{if}\ \
\sup\limits_{t2^{2j\beta}\geq 1}
(t2^{2j\beta})^{\tau} 2^{\frac{nj}{2}}
2^{j\gamma}|a^{\epsilon}_{j,k}(t)| +\sup\limits_{0<t2^{2j\beta}<1}
2^{\frac{nj}{2}} 2^{j\gamma}|a^{\epsilon}_{j,k}(t)|<\infty;\\
a(\cdot,\cdot)\in\B^{\gamma}_{0,\infty}\ \ \hbox{if}\ \
t^{\frac{-\gamma}{2\beta}} 2^{\frac{nj}{2}}|\langle a(t,\cdot),
\Phi^{0}_{j,k}\rangle|<\infty.
\end{cases}
$$
\end{definition}

It is easy to verify the following inclusions.

\begin{lemma}\label{le:tau}
Given $1<p, q<\infty$, $\gamma_{1},\gamma_{2}\in \mathbb{R}$,
$m>p$ and $m',\tau>0$.

\item{\rm(i)} If $m>0$, then $\B^{\gamma_{1},\gamma_{2}}_{p, q, m,m'}
\subset\B^{\gamma_{1}-\gamma_{2}}_{\frac{m}{p},\infty}$.
\item{\rm(ii)} If $-2\beta\tau<\gamma<0<\beta$, then $\B^{\gamma}_{\tau,\infty}\subset
\B^{\gamma}_{0,\infty}.$
\end{lemma}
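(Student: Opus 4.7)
For part (i), the plan is to test the defining norms of $\B^{\gamma_{1},\gamma_{2},I}_{p,q,m}$ and $\B^{\gamma_{1},\gamma_{2},II}_{p,q}$ against the smallest admissible cube. Fix $(\epsilon,j,k)\in\Lambda_{n}$ and take $Q_{r}=Q_{j,k}$, i.e., $r=2^{-j}$, so $|Q_{r}|=2^{-jn}$. Then only the index $j'=j$ survives the sum starting at $j'\geq -\log_{2}r$, and the only $(\epsilon',k')$ with $Q_{j',k'}\subset Q_{r}$ is $(\epsilon,k)$ itself. If $t2^{2j\beta}\geq 1$ (so $j\geq -\log_{2}t/(2\beta)$), the $I$-expression collapses to
\[
I^{\gamma_{1},\gamma_{2}}_{p,q,Q_{j,k},m}(t)\ \geq\ 2^{jq(\gamma_{1}-\gamma_{2}+n/2)}\,|a^{\epsilon}_{j,k}(t)|^{q}\,(t2^{2j\beta})^{mq/p},
\]
and taking $q$-th roots gives exactly the pointwise bound for $\B^{\gamma_{1}-\gamma_{2}}_{m/p,\infty}$ in the regime $t2^{2j\beta}\geq 1$. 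A parallel computation with $II$ handles the regime $0<t2^{2j\beta}<1$, and the intersection defining $\B^{\gamma_{1},\gamma_{2}}_{p,q,m,m'}$ yields both uniform bounds.

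For part (ii), the idea is to pass from scaling-function coefficients to wavelet coefficients by duality. Writing $a(t,\cdot)=\sum_{\epsilon',j',k'}a^{\epsilon'}_{j',k'}(t)\Phi^{\epsilon'}_{j',k'}$ and using the Fourier-support localization of Meyer wavelets, the pairing $\langle \Phi^{\epsilon'}_{j',k'},\Phi^{0}_{j,k}\rangle$ vanishes unless $j'\leq j+O(1)$. Apply Lemma \ref{le9} (with $\{\Phi^{0}\}$ and $\{\Phi^{\epsilon'}\}$ playing the roles of the two wavelet bases) to obtain, for arbitrarily large $N$,
\[
|\langle \Phi^{0}_{j,k},\Phi^{\epsilon'}_{j',k'}\rangle|\ \lesssim\ 2^{-(j-j')(n/2+N)}\bigl(1+|2^{j'-j}k-k'|\bigr)^{-n-N},
\]
and combine with the hypothesis $|a^{\epsilon'}_{j',k'}(t)|\lesssim 2^{-nj'/2}2^{-j'\gamma}\max\{1,(t2^{2j'\beta})^{-\tau}\}$ from $\B^{\gamma}_{\tau,\infty}$. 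Summing over $k'$ gives $O(1)$, so one is left with a one-dimensional sum over $j'\leq j+O(1)$.

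Setting $j_{0}=-\log_{2}t/(2\beta)$, split according to whether $j\leq j_{0}$ or $j>j_{0}$. When $j\leq j_{0}$ the $\max\{\cdot\}$ equals $1$ throughout, and the geometric series $\sum_{j'\leq j}2^{j'(N-\gamma)}$ converges to its top term because $N-\gamma>0$; the resulting bound $2^{-jn/2-j\gamma}$, multiplied by $t^{-\gamma/(2\beta)}2^{jn/2}$, yields $(t^{1/(2\beta)}2^{j})^{-\gamma}\leq 1$ since $\gamma<0$ and $2^{j}\leq t^{-1/(2\beta)}$. When $j>j_{0}$ split the sum at $j_{0}$: the $j'\leq j_{0}$ piece repeats the previous argument and contributes $2^{-(j-j_{0})N}\leq 1$ after normalization, while the $j_{0}<j'\leq j$ piece carries the factor $(t2^{2j'\beta})^{-\tau}$ and reduces to $t^{-\tau}\sum_{j_{0}<j'\leq j}2^{j'(N-\gamma-2\beta\tau)}$. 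Choosing $N$ large makes this sum geometric with top term $j'=j$, yielding the normalized bound $(t2^{2j\beta})^{-(\gamma+2\beta\tau)/(2\beta)}$, which is $\leq 1$ precisely because $\gamma+2\beta\tau>0$ and $t2^{2j\beta}>1$.

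The main obstacle is the second case of part (ii): one has to simultaneously exploit the $\tau$-decay above the critical scale $j_{0}$, the off-diagonal decay of Lemma \ref{le9}, and the sharp hypothesis $-2\beta\tau<\gamma$, which is exactly what makes the geometric series sum to a bounded quantity after the $t^{-\gamma/(2\beta)}2^{nj/2}$ normalization. Part (i) is essentially bookkeeping once the correct test cube is identified.
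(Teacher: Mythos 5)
The paper states this lemma without proof (``It is easy to verify the following inclusions''), so there is no paper argument to compare against; I assess your proposal on its own.

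Your part (i) is correct. Testing the $I$- and $II$-conditions on the dyadic cube $Q_r=Q_{j,k}$ and retaining only the single term $j'=j$, $(\epsilon',k')=(\epsilon,k)$, the two factors $|Q_r|^{q\gamma_2/n-q/p}$ and $2^{jq(\gamma_1+n/2-n/p)}$ combine to $2^{jq(\gamma_1-\gamma_2+n/2)}$, and taking $q$-th roots yields exactly the defining bounds of $\B^{\gamma_1-\gamma_2}_{m/p,\infty}$ in the two regimes $t2^{2j\beta}\geq 1$ (from membership in $\B^{\gamma_1,\gamma_2,I}_{p,q,m}$) and $0<t2^{2j\beta}<1$ (from $\B^{\gamma_1,\gamma_2,II}_{p,q}$).

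Part (ii) is right in outline but misapplies Lemma \ref{le9}. That lemma concerns two \emph{wavelet} bases indexed by $\Lambda_n$ (i.e.\ $\epsilon\in E_n$); the factor $2^{-|j-j'|(n/2+N)}$ is produced by the vanishing moments of the \emph{finer}-scale function tested against the smoothness of the coarser one. In the pairing $\langle \Phi^0_{j,k},\Phi^{\epsilon'}_{j',k'}\rangle$ with $j'<j$, the finer-scale function is the scaling function $\Phi^0_{j,k}$, which has no vanishing moments, so the extra $2^{-(j-j')N}$ is simply not there. A direct Fourier computation (using that for $j'\leq j-2$ one has $\widehat{\Phi^0}(2^{-j}\xi)\equiv 1$ on the support of $\widehat{\Phi^{\epsilon'}}(2^{-j'}\xi)$) gives instead
\[
\bigl|\langle \Phi^0_{j,k},\Phi^{\epsilon'}_{j',k'}\rangle\bigr|\ \lesssim\ 2^{-n(j-j')/2}\,\bigl(1+|2^{j'-j}k-k'|\bigr)^{-N}\qquad\text{for every }N,
\]
with no scale-separation gain beyond the $L^2$-normalization $2^{-n(j-j')/2}$. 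Also the hypothesis bound you write should read $\min\{1,(t2^{2j'\beta})^{-\tau}\}$, not $\max$ (your subsequent casework is consistent with $\min$, so this is a typo). With the corrected pairing estimate the proof still closes, but the bookkeeping shifts: after summing in $k'$ you get $2^{-nj/2}\sum_{j'\leq j}2^{-j'\gamma}\min\{1,(t2^{2j'\beta})^{-\tau}\}$. The $j'\leq j_0$ block is a geometric sum dominated by its \emph{top} term because $\gamma<0$, while the $j_0<j'\leq j$ block, which equals $2^{-nj/2}t^{-\tau}\sum_{j_0<j'\leq j}2^{-j'(\gamma+2\beta\tau)}$, is a geometric sum dominated by its \emph{bottom} term $j'\approx j_0$ because $\gamma+2\beta\tau>0$, producing, after the normalization $t^{-\gamma/(2\beta)}2^{nj/2}$, a bound $\approx 1$ rather than the extra decay $(t2^{2j\beta})^{-(\gamma+2\beta\tau)/(2\beta)}$ you claim. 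So the conclusion and the overall strategy are sound, but the invocation of Lemma \ref{le9} is not legitimate, the resulting estimate is too strong, and the final geometric-series argument needs to be rerun with the correct (weaker) pairing bound.
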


In sake of our convenience, for any dyadic cube $Q_{j_{0}, k_{0}}$, we always use $\widetilde{Q}_{j_{0},k_{0}}$ to denote the dyadic cube containing $Q_{j_{0}, k_{0}}$ with side length $2^{8-j_{0}}$.
Given
$(\epsilon,j,k)\in \Lambda_{n}$. If $\epsilon\in E_{n}$ and
$Q_{j,k}\subset Q_{j_{0}, k_{0}}$, we write $(\epsilon,k)\in
S^{j}_{j_{0},k_{0}}$.
For any $w\in \mathbb{Z}^{n}$, denote
$\widetilde{Q}^w_{j_{0},k_{0}}= 2^{8-j_{0}}w + \widetilde{Q}_{j_{0},
k_{0}}$. Denote $(\epsilon,k)\in S^{w,j}_{j_{0},k_{0}}$ whenever
$Q_{j,k}\subset \widetilde{Q}^w_{j_{0},k_{0}}$. Furthermore, we frequently utilize the so-called $\alpha$-triangle inequality below:
$$
(a+b)^{\alpha}\leq a^{\alpha}+b^{\alpha}\quad\forall\quad (\alpha,a,b)\in (0,1]\times(0,\infty)\times(0,\infty).
$$

Now we characterize the Besov-Q spaces by using a
semigroup operator.
\begin{theorem}\label{th1}
Given  $1<p<m<\infty$, $m'>0$, $1<q<\infty$,
$\gamma_{1}-\gamma_{2}<0 <\beta$. If $f\in \dot{B}^{\gamma_{1},\gamma_{2}}_{p,q
}$, then $f*K^{\beta}_{t}\in
\B^{\gamma_{1},\gamma_{2}}_{p, q, m,m'}$.
\end{theorem}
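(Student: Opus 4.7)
The plan is to use the wavelet decomposition of the semigroup iterate and then check membership in each of the four pieces $\B^{\gamma_1,\gamma_2,I}_{p,q,m}$, $\B^{\gamma_1,\gamma_2,II}_{p,q}$, $\B^{\gamma_1,\gamma_2,III}_{p,q,m}$, $\B^{\gamma_1,\gamma_2,IV}_{p,q,m'}$ separately. By Lemma \ref{le1} we write $f = \sum a^{\epsilon}_{j,k} \Phi^{\epsilon}_{j,k}$ in the Meyer basis, so that $f*K^{\beta}_t = \sum a^{\epsilon}_{j,k}(t)\Phi^{\epsilon}_{j,k}$ with $a^{\epsilon}_{j,k}(t)=\langle f*K^\beta_t,\Phi^\epsilon_{j,k}\rangle$. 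The wavelet characterization (Lemma \ref{lem:c}(ii)) allows us to read off $\|f\|_{\dot{B}^{\gamma_1,\gamma_2}_{p,q}}$ from the coefficients $a^{\epsilon}_{j,k}$, and Lemma \ref{le4} reduces $|a^{\epsilon}_{j,k}(t)|$ to a weighted sum of $|a^{\epsilon'}_{j',k'}|$ with $|j-j'|\le 1$. Substituting Lemma \ref{le4} into each of $I,II,III,IV$ reduces each claim to bounding a supremum of the Besov-Q wavelet norm over dyadic cubes, which is what we need.

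For the high-frequency-in-time regimes $t2^{2j\beta}\ge 1$ (quantities $I$ and $III$), I apply Lemma \ref{le4}(i). For $I$ the gain comes from $(t2^{2j\beta})^{m}e^{-p\tilde c\, t2^{2j\beta}}\lesssim 1$, so the $t$-weight is absorbed and we are left with the frequency–space Besov-Q sum. For $III$, the change of variables $s=t2^{2j\beta}$ turns the integrand into $s^{m}e^{-p\tilde c s}\,ds/s$ on $[1,(2^j r)^{2\beta}]$, yielding a finite constant independent of $j$ and $r$. For the low-frequency-in-time regimes $0<t2^{2j\beta}<1$ (quantities $II$ and $IV$), I apply Lemma \ref{le4}(ii), which gives a clean pointwise estimate without extra $t$-factor. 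In $IV$ the same change of variables reduces $\int_0^{2^{-2j\beta}}(t2^{2j\beta})^{m'}\,dt/t$ to $\int_0^1 s^{m'-1}\,ds=1/m'<\infty$, again absorbing the $t$-weight uniformly in $j$. In $II$ the estimate is direct. After these steps, all four quantities are dominated by an expression of the shape $|Q_r|^{q\gamma_2/n-q/p}\sum_{j\ge -\log_2 r}2^{jq(\gamma_1+n/2-n/p)}\Bigl[\sum_{(\epsilon,k):Q_{j,k}\subset Q_r}\bigl|\sum_{\epsilon',|j-j'|\le 1,k'}|a^{\epsilon'}_{j',k'}|(1+|2^{j-j'}k'-k|)^{-N}\bigr|^{p}\Bigr]^{q/p}$.

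The main technical step, which I expect to be the main obstacle, is showing that this last quantity is controlled by $\|f\|_{\dot{B}^{\gamma_1,\gamma_2}_{p,q}}^{q}$. The trouble is that the outer sum is restricted to $Q_{j,k}\subset Q_r$ while the inner sum runs over all $k'$. I would split the $k'$-sum using the translates $\widetilde{Q}^{w}_{j_0,k_0}$ and the index sets $S^{w,j}_{j_0,k_0}$ introduced before the theorem, writing $\sum_{k'}=\sum_{w\in\mathbb Z^n}\sum_{(\epsilon',k')\in S^{w,j}_{j',k'}}$, so that the decay factor contributes $(1+|w|)^{-N}$ for each translate. Choosing $N$ much larger than $n/\min\{1,p,q\}$ and applying the $\alpha$-triangle inequality (when $\min\{p,q\}\le 1$ contexts occur; otherwise Minkowski/Hölder) lets me move the $(1+|w|)^{-N}$ weight outside the $\ell^p$/$\ell^q$ norms. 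After this, on each translate the inner block is bounded by a discrete convolution, estimated by Young's inequality with an $\ell^1$ kernel, reducing the inner sum to $\sum_{(\epsilon',k'):Q_{j',k'}\subset \widetilde{Q}^w_r}|a^{\epsilon'}_{j',k'}|^p$.

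Finally, since $|\widetilde{Q}^w_r|\approx |Q_r|$ and $|j-j'|\le 1$, the wavelet characterization of $\dot{B}^{\gamma_1,\gamma_2}_{p,q}$ from Lemma \ref{lem:c}(ii), applied to each translate $\widetilde{Q}^w_r$, bounds the corresponding quantity by $\|f\|_{\dot{B}^{\gamma_1,\gamma_2}_{p,q}}^{q}$. Summing over $w$ via the integrable weight $(1+|w|)^{-N}$ yields the required global bound. The constraint $m>p$ is used only to ensure the exponential absorbs $(t2^{2j\beta})^{m}$ in the $p$-th power form; the hypothesis $\gamma_1-\gamma_2<0$ does not appear directly in these estimates but guarantees that $f*K^\beta_t$ also lies in the auxiliary space $\B^{\gamma_1-\gamma_2}_{m/p,\infty}$ from Lemma \ref{le:tau}, which will be needed later in the paper.
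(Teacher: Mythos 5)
Your proposal follows the same route as the paper's proof: decompose $f$ in a Meyer wavelet basis, apply Lemma \ref{le4} to obtain the pointwise estimate of $a^{\epsilon}_{j,k}(t)$ with exponential decay in $t2^{2j\beta}$ (for $t2^{2j\beta}\ge 1$) or plain polynomial decay (for $t2^{2j\beta}<1$), check the four pieces $I$, $II$, $III$, $IV$ separately by absorbing the $t$-weights (via $(t2^{2j\beta})^{m}e^{-\tilde c p\, t2^{2j\beta}}\lesssim 1$ for $I$ and $III$, and $\int_0^1 s^{m'-1}\,ds<\infty$ for $IV$), localize the $k'$-sum over translated cubes $\widetilde{Q}^{w}_r$ with $(1+|w|)^{-N}$ decay, and close the argument using the $\alpha$-triangle inequality when $q\le p$ and H\"older when $q>p$, exactly as the paper's Cases 1.1--4.2 do. Your sketch is correct and essentially identical in structure to the paper's; the only cosmetic imprecision is your framing of the $\alpha$-triangle/H\"older split as depending on $\min\{p,q\}$ (always $>1$ here) rather than on whether $q/p\le 1$, but your intent is clearly the latter, which is what the paper does.
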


\begin{proof} We will prove
$$\begin{array}{rl}
&f= \sum\limits_{(\epsilon,j,k)\in
\Lambda_{n}} a^{\epsilon}_{j,k} \Phi^{\epsilon}_{j,k}\in
\dot{B}^{\gamma_{1},\gamma_{2}}_{p,q}\Longrightarrow f*K^{\beta}_{t}=
\sum\limits_{(\epsilon,j,k)\in \Lambda_{n}} a^{\epsilon}_{j,k}(t)
\Phi^{\epsilon}_{j,k} \in \B^{\gamma_{1},\gamma_{2}}_{p,q, m,m'}.\end{array}
$$
via handling four situations.

{\bf Situation 1}: $K^{\beta}_{t}\ast f\in \B^{\gamma_{1},\gamma_{2}, I}_{p,q,m}$.
For $t2^{2j\beta}>1, m>0$, by (\ref{eq3.5}), there
exists a constant $N$ large enough such that
$$\begin{array}{rl}
|a^{\epsilon}_{j,k}(t)|\lesssim&e^{-\tilde c t 2^{2j\beta}}
\sum\limits_{\epsilon',|j-j'|\leq 1, k'}\frac{
|a^{\epsilon'}_{j',k'}|}{ (1+|2^{j-j'}k'-k)|)^{N}} \lesssim
2^{\frac{-nj}{2}} 2^{j(\gamma_{2}-\gamma_{1})}e^{-\tilde c t
2^{2j\beta}}.
\end{array}$$
Choosing a sufficiently large $N'$ (depending on $N$) in the last estimate we have
$$\begin{array}{rl}
I^{\gamma_{1},\gamma_{2}}_{p,q, Q_{r},
m}(t)
&\lesssim \quad
|Q_{r}|^{\frac{q\gamma_{2}}{n}-\frac{q}{p}}\sum\limits_{j\geq\max\{-\log_{2}r,-\frac{\log_{2}t}{2\beta}\}}
2^{qj(\gamma_{1}+\frac{n}{2}-\frac{n}{p})}\\
& \quad\Big[\sum\limits_{(\epsilon,k)\in
S^{j}_{r}}e^{-\tilde{c}t2^{2j\beta}}(\sum\limits_{\epsilon',|j-j'|\leq1,
k'}\frac{|a^{\epsilon'}_{j',k'}|}{(1+|2^{j-j'}k'-k|)^{N}})^{p}(t2^{2j\beta})^{m}\Big]^{\frac{q}{p}},
\end{array}$$
where $p>1$ has been used. In the sequel, we divide the proof into two cases.

Case 1.1: $q\le p$. Because $|j-j'|\leq1$ and $j>-\log_{2}r$, one gets
$2^{-(j'+1)n}\<r^{n}$. This implies that $(2^{nj'}|Q|)^{-N'}\lesssim1$. Hence
$$\begin{array}{rl}
I^{\gamma_{1},\gamma_{2}}_{p,q,Q_{r}, m}(t)
 &\lesssim \!\!\!
\sum\limits_{w\in\mathbb{Z}^{n}}\frac{|Q_{r}|^{\frac{q\gamma_{2}}{n}-\frac{q}{p}}}{(1+|w|)^{{Nq}/{p}}}\sum\limits_{j'\geq\max\{-\log_{2}r,-\frac{\log_{2}t}{2\beta}\}}
2^{qj'(\gamma_{1}+\frac{n}{2}-\frac{n}{p})}
[\sum\limits_{(\epsilon',k')\in
S^{w,j'}_{r}}|a^{\epsilon'}_{j',k'}|^{p}]^{\frac{q}{p}}\\
&\lesssim\quad \|f\|_{\dot{B}^{\gamma_{1},\gamma_{2}}_{p,q}}.
\end{array}
$$

Case 1.2: $q>p$. Applying H\"{o}lder's inequality to
$w\in\mathbb{Z}^{n}$, we similarly have
$$\begin{array}{rl}
I^{\gamma_{1},\gamma_{2}}_{p,q,Q_{r},
m}(t)
&\lesssim\ \sum\limits_{w\in\mathbb{Z}^{n}}\Big\{|Q_{r}|^{\frac{q\gamma_{2}}{p}-\frac{q}{p}}
\sum\limits_{j'\geq\max\{-\log_{2}r,-\frac{\log_{2}t}{2\beta}\}}2^{qj'(\gamma_{1}+\frac{n}{2}-\frac{n}{p})}
[\sum\limits_{(\epsilon',k')\in
S^{w,j'}_{r}}|a^{\epsilon'}_{j',k'}|^{p}]^{\frac{q}{p}}\Big\}\\
&\lesssim\ \|f\|_{\dot{B}^{\gamma_{1},\gamma_{2}}_{p,q}}.
\end{array}$$

{\bf Situation 2}: $K^{\beta}_{t}\ast f\in \B^{\gamma_{1},\gamma_{2}, II}_{p,q}$.
For $t2^{2\beta j}\leq1$ and $m'>0$, by (\ref{eq3.6}), there exists a natural number $N$ large enough
such that $N>2n$ and
$$\begin{array}{rl}
|a^{\epsilon}_{j,k}(t)|\lesssim \sum\limits_{\epsilon', |j-j'|\leq1,
k'}|a^{\epsilon'}_{j',k'}|(1+|2^{j-j'}k'-k|)^{-N}\end{array}.$$
Consequently, we have
$$\begin{array}{rl}
II^{\gamma_{1},\gamma_{2}}_{p,q,Q_{r}}(t)\lesssim&|Q|^{\frac{q\gamma_{2}}{n}-\frac{q}{p}}\sum\limits_{-\log_{2}r\leq
j\leq-\frac{\log_{2}t}{2\beta}}2^{jq(\gamma_{1}+\frac{n}{2}-\frac{n}{p})}\Big[\sum\limits_{(\epsilon,k)\in
S^{j}_{r}}\Big(\sum\limits_{\epsilon',|j'-j|\leq1,
k'}\frac{|a^{\epsilon'}_{j',k'}|}{(1+|2^{j-j'}k'-k|)^{N}}\Big)^{p}\Big]^{\frac{q}{p}}.
\end{array}$$
Case 2.1: $q\le p$. Notice that $(2^{nj'}|Q|)^{-N}\lesssim1$. We have
$$\begin{array}{rl}
II^{\gamma_{1},\gamma_{2}}_{p,q,Q_{r}}(t)\lesssim&\!\!\!\sum\limits_{|w|\in\mathbb{Z}^{n}}
\frac{|Q_{r}|^{\frac{q\gamma_{2}}{n}-\frac{q}{p}}}{(1+|w|)^{qN/p}}
\sum\limits_{-\log_{2}r-1\leq
j'\leq-\frac{\log_{2}t}{2\beta}-1}2^{j'q(\gamma_{1}+\frac{n}{2}-\frac{n}{p})}
\Big(\sum\limits_{(\epsilon', k')\in
S^{w,j'}_{r}}|a^{\epsilon'}_{j',k'}|^{p}\Big)^{\frac{q}{p}}\\
\lesssim&\!\!\!\|f\|_{\dot{B}^{\gamma_{1},\gamma_{2}}_{p,q}}.
\end{array}$$
Case 2.2: $q>p$. By H\"{o}lder's inequality, by a similar manner, we can obtain
$$\begin{array}{rl}
II^{\gamma_{1},\gamma_{2}}_{p,q,Q_{r}}(t)\lesssim&\!\!\!
|Q_{r}|^{\frac{q\gamma_{2}}{n}-\frac{q}{p}}\sum\limits_{-\log_{2}r-1\leq
j'\leq-\frac{\log_{2}t}{2\beta}-1}2^{j'q(\gamma_{1}+\frac{n}{2}-\frac{n}{p})}\\
& \Big[\sum\limits_{(\epsilon',k')\in
S^{w,j'}_{Q_{r}}}|a^{\epsilon'}_{j',k'}|^{p}(1+|2^{j-j'}k'-k|)^{-N'}\Big]^{\frac{q}{p}}\\
\lesssim&\!\!\!\|f\|_{\dot{B}^{\gamma_{1},\gamma_{2}}_{p,q}}.
\end{array}$$

{\bf Situation 3}: $K^{\beta}_{t}\ast f\in \B^{\gamma_{1},\gamma_{2},
III}_{p,q,m}$. For this case we have $2^{-2j\beta}<t<r^{2\beta}$ and thus
$$\begin{array}{rl}
|a^{\epsilon}_{j,k}(t)|\lesssim&e^{-ct2^{2j\beta}}\sum\limits_{\epsilon',
|j-j'|\leq1,k'}\frac{|a^{\epsilon'}_{j',k'}|}{(1+|2^{j-j'}k'-k|)^{N'}}
\lesssim2^{-\frac{nj}{2}}2^{j(\gamma_{2}-\gamma_{1})}(t2^{2j\beta})^{-\tau}.
\end{array}$$
This yields
$$\begin{array}{rl}
III^{\gamma_{1},\gamma_{2}}_{p,q,Q_{r},m}
&\lesssim\ |Q_{r}|^{\frac{q\gamma_{2}}{n}-\frac{q}{p}}\sum\limits_{j\geq-\log_{2}r}
2^{jq(\gamma_{1}+\frac{n}{2}-\frac{n}{p})}\Big[\int^{r^{2\beta}}_{2^{-2j\beta}}(t2^{2j\beta})^{m}\\
&\ \sum\limits_{(\epsilon,k)\in S^{j}_{r}}e^{-cpt2^{2j\beta}}\Big(\sum\limits_{\epsilon',|j-j'|\leq 3,
k'}|a^{\epsilon'}_{j',k'}|(1+|2^{j-j'}k'-k|)^{-N}\Big)^{p}\frac{dt}{t}\Big]^{\frac{q}{p}}\\
\end{array}$$
Notice that $j\sim j'$ and the number of $\epsilon'$ is finite. Applying H\"older's inequality on $k'$ we obtain
$$\begin{array}{rl}
\Big[\sum\limits_{\epsilon',|j-j'|\leq3, k'}\frac{|a^{\epsilon'}_{j',k'}|}{(1+|2^{j-j'}k'-k|)^{N}}\Big]^{p}
&\lesssim \sum\limits_{|j-j'|\leq3}\sum\limits_{\epsilon', k'}\frac{|a^{\epsilon'}_{j',k'}|^{p}}{(1+|2^{j-j'}k'-k|)^{N}}.
\end{array}$$
Let $Q_{j,k}$ and $Q_{j',k'}$ be two dyadic cubes. Denote by $\widetilde{Q}_{j,k}$ the dyadic cube containing $Q_{j,k}$ with side length $2^{8-j}$. For $w\in\mathbb{Z}^{n}$, denote by $Q^{w}_{j,k}$ the cube $\widetilde{Q}_{j,k}+2^{8-j}w$.
It is easy to see that if $Q_{j',k'}\subset Q^{w}_{j,k}$, then
$$(1+|2^{j-j'}k'-k|)^{-N}\lesssim (1+|w|)^{-N}$$ (see also (\ref{eqn:est1})). We obtain that
$$\begin{array}{rl}
III^{\gamma_{1},\gamma_{2}}_{p,q,Q_{r},m}
&\lesssim\ |Q_{r}|^{\frac{q\gamma_{2}}{n}-\frac{q}{p}}\sum\limits_{j\geq-\log_{2}r}
2^{jq(\gamma_{1}+\frac{n}{2}-\frac{n}{p})}\Big[\int^{r^{2\beta}}_{2^{-2j\beta}}(t2^{2j\beta})^{m}\sum\limits_{(\epsilon,k)\in S^{j}_{r}}e^{-cpt2^{2j\beta}}\\
&\ \sum\limits_{|j-j'|\leq3}\sum\limits_{w\in\mathbb{Z}^{n}}\sum\limits_{(\epsilon',k')\in S^{w,j'}_{j,k}}|a^{\epsilon'}_{j',k'}|^{p}(1+|2^{j-j'}k'-k|)^{-N'}\frac{dt}{t}\Big]^{\frac{q}{p}}.
\end{array}$$
The number of $Q_{j',k'}$ which
are contained in the dyadic cube $ Q_{j,k}^{w}=
2^{8-j}w+\widetilde{Q}_{j,k}$ equals to $2^{n(8+j'-j)}$. On the
other hand, for any dyadic cube $Q_{r}$ with radius $r$, the number
of $Q_{j,k}\subset Q_{r}$ equals to $(2^{j}r)^{n}$. Then the number
of $Q_{j',k'}$ which are contained in the dyadic cube $Q^{w}_{r}$
equals $(2^{8+j'}r)^{n}$. Denote $S^{w,j'}_{r}$ the set of $(\epsilon',k') $ such that $Q_{j',k'}\subset Q^{w}_{r}$.
Finally we have
$$\begin{array}{rl}
III^{\gamma_{1},\gamma_{2}}_{p,q,Q_{r},m}&\lesssim\ |Q_{r}|^{\frac{q\gamma_{2}}{n}-\frac{q}{p}}\sum\limits_{j'\geq-\log_{2}r-3}2^{j'q(\gamma_{1}+\frac{n}{2}-\frac{n}{p})}
\Big\{\int^{r^{2\beta}}_{2^{-2j\beta}}(t2^{2j'\beta})\\
&\quad [\sum\limits_{|w|\leq
2^{n}}+\sum\limits_{|w|>
2^{n}}](1+|w|)^{-N} \sum\limits_{(\epsilon',k')\in
 S^{w,j'}_{r}}e^{-ct2^{2j'\beta}}|a^{\epsilon'}_{j',k'}|^{p}\frac{dt}{t}\Big\}^{\frac{q}{p}}\\
 &=:\ M_{1}+M_{2}.
\end{array}$$
By the definition of $\dot{B}^{\gamma_{1},\gamma_{2}}_{p,q}$, it is easy to see that $M_{1}\lesssim \|f\|_{\dot{B}^{\gamma_{1},\gamma_{2}}_{p,q}}$.
For the term $M_{2}$, we divide the
estimate into two cases.

Case 3.1: $q\le p$. For this case, $j'\geq-\log_{2}r-3$ implies
$(2^{nj'}r^{n})^{N'}\lesssim 1$ and
$$\begin{array}{rl}
M_{2}\lesssim&\ |Q_{r}|^{\frac{q\gamma_{2}}{n}-\frac{q}{p}}\sum\limits_{j'\geq-\log_{2}r-3}2^{qj'(\gamma_{1}+\frac{n}{2}-\frac{n}{p})}
\sum\limits_{w\in\mathbb{Z}^{n}}(1+|w|)^{-\frac{qN'}{p}}\\
&\Big[\int^{r^{2\beta}}_{2^{-2j\beta}}(t2^{2j'\beta})^{m}\sum\limits_{(\epsilon',k')\in
S^{w,j'}_{r}}e^{-cpt2^{2j'\beta}}|a^{\epsilon}_{j',k'}|^{p}(2^{nj'}|Q|)^{-N'}\frac{dt}{t}\Big]^{\frac{q}{p}}\\
\lesssim&\ \|f\|_{\dot{B}^{\gamma_{1},\gamma_{2}}_{p,q}}.
\end{array}$$

Case 3.2: $q>p$. For this case, by H\"older's inequality and $j\sim j'$ we obtain
$$\begin{array}{rl}
&\Big[\int^{r^{2\beta}}_{2^{-2j\beta}}(t2^{2j\beta})^{m}\sum\limits_{|w|>2^{n}}\sum\limits_{(\epsilon',k')\in
S^{w,j'}_{r}}e^{-cpt2^{2j\beta}}(1+|w|)^{-N}|a^{\epsilon'}_{j',k'}|^{p}(2^{nj'}|Q|)^{-N'}\frac{dt}{t}\Big]^{\frac{q}{p}}\\
&\lesssim\sum\limits_{|w|>2^{n}}(1+|w|)^{-\frac{qN'}{p}}\Big(\int^{r^{2\beta}}_{2^{-2j\beta}}(t2^{2j'\beta})^{m}
\sum\limits_{(\epsilon',k')\in
S^{w,j'}_{r}}e^{-cpt2^{2j'\beta}}|a^{\epsilon'}_{j',k'}|^{p}\frac{dt}{t}\Big)^{\frac{q}{p}}.
\end{array}$$
The rest of the argument is similar to that of Case 3.1, and so omitted.

{\bf Situation 4}: $K^{\beta}_{t}\ast f\in \B^{\gamma_{1},\gamma_{2}, IV}_{p,q,m}$.
Because $|j-j'|\leq 1$ and $0<t<2^{-2j\beta}$,  we can obtain
$$\begin{array}{rl}
IV^{\gamma_{1},\gamma_{2}}_{p,q,Q_{r},
m'}
\lesssim&|Q_{r}|^{\frac{q\gamma_{2}}{n}-\frac{q}{p}}\sum\limits_{j\geq-\log_{2}r}
2^{jq(\gamma_{1}+\frac{n}{2}-\frac{n}{p})} \Big[\sum\limits_{|w|\leq
2^{n}}\frac{1}{(1+|w|)^{N}}\sum\limits_{(\epsilon',k')\in
S^{w,j'}_{r}}|a^{\epsilon'}_{j',k'}|^{p}\Big]^{\frac{q}{p}}\\
+&|Q_{r}|^{\frac{q\gamma_{2}}{n}-\frac{q}{p}}\sum\limits_{j\geq-\log_{2}r}
2^{jq(\gamma_{1}+\frac{n}{2}-\frac{n}{p})} \Big[\sum\limits_{|w|>
2^{n}}\frac{1}{(1+|w|)^{N}}\sum\limits_{(\epsilon',k')\in
S^{w,j'}_{r}}\frac{|a^{\epsilon'}_{j',k'}|^{p}}{(2^{nj'}|Q|)^{N'}}\Big]^{\frac{q}{p}}.
\end{array}$$

Case 4.1: $q\le p$. For this case, by the $\alpha-$triangle inequality we have
$$\begin{array}{rl}
IV^{\gamma_{1},\gamma_{2}}_{p,q,Q_{r},m'}\lesssim&\sum\limits_{w\in\mathbb{Z}^{n}}\frac{|Q|^{\frac{q\gamma_{2}}{n}-\frac{q}{p}}}{(1+|w|)^{\frac{qN}{p}}}
\sum\limits_{j'\geq-\log_{2}r-1}
2^{qj'(\gamma_{1}+\frac{n}{2}-\frac{n}{p})}
\Big(\sum\limits_{(\epsilon',k')\in
S^{w,j'}_{r}}|a^{\epsilon'}_{j',k'}|^{p}\Big)^{\frac{q}{p}}\\
\lesssim&\|f\|_{\dot{B}^{\gamma_{1},\gamma_{2}}_{p,q}}.
\end{array}$$

Case 4.2: $q>p$. Using H\"older's inequality we have
$$\begin{array}{rl}
IV^{\gamma_{1},\gamma_{2}}_{p,q,Q_{r},m'}\lesssim&\sum\limits_{w\in\mathbb{Z}^{n}}\frac{|Q|^{\frac{q\gamma_{2}}{n}-\frac{q}{p}}}{(1+|w|)^{N}}
\sum\limits_{j'\geq-\log_{2}r-1}
2^{qj'(\gamma_{1}+\frac{n}{2}-\frac{n}{p})}
\Big(\sum\limits_{(\epsilon',k')\in
S^{w,j'}_{r}}|a^{\epsilon'}_{j',k'}|^{p}\Big)^{\frac{q}{p}}\\
\lesssim&\|f\|_{\dot{B}^{\gamma_{1},\gamma_{2}}_{p,q}}.
\end{array}$$
This completes the proof of Theorem \ref{th1}.

\end{proof}

We close this section by showing the following continuity of the Riesz transforms acting on the Besov-Q spaces; see also \cite{Al} and \cite{Yang1} for some related results.

\begin{theorem}\label{th4}
For $1<p, q<\infty, \gamma_{1},\gamma_{2}\in \mathbb{R}, m>p$,
and $m'>0$, the Riesz transforms $R_1,R_2,...,R_n$ are continuous on
$\B^{\gamma_{1},\gamma_{2}}_{p, q, m,m'}$.
\end{theorem}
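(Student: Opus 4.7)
The strategy is to combine Lemma \ref{le8} on the almost-diagonal wavelet matrix representation of Calder\'on--Zygmund operators with the four-piece decomposition of the $\B^{\gamma_{1},\gamma_{2}}_{p,q,m,m'}$-norm, in the spirit of the proof of Theorem \ref{th1}. Since each Riesz transform $R_{l}$ belongs to $CZO(N_{0})$ for arbitrarily large $N_{0}$, Lemma \ref{le8}(i) applied to its kernel yields
$$\tilde a^{\epsilon}_{j,k}(t)=\sum_{\epsilon',j',k'} c^{\epsilon,\epsilon'}_{j,k,j',k'}\,a^{\epsilon'}_{j',k'}(t)$$
for the wavelet coefficients of $R_{l}a(t,\cdot)$ in terms of those of $a(t,\cdot)$, with $|c^{\epsilon,\epsilon'}_{j,k,j',k'}|$ controlled by the right-hand side of (\ref{eq7}).

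My plan is to bound each of the four pieces $I,II,III,IV$ comprising $\|R_{l}a\|_{\B^{\gamma_{1},\gamma_{2}}_{p,q,m,m'}}$ separately. Setting $j'=j+s$ with $s\in\mathbb{Z}$ and bounding the spatial factor in (\ref{eq7}) by an $s$-dependent constant times $(1+|w|)^{-(n+N_{0})}$, where $w\in\mathbb{Z}^{n}$ indexes the dyadic translates $\widetilde Q^{w}_{j,k}$ of the enlarged cube containing $Q_{j,k}$, I can mimic the four Situations in the proof of Theorem \ref{th1} verbatim. After the same case split $q\leq p$ (handled by the $\alpha$-triangle inequality) versus $q>p$ (handled by H\"older's inequality in $w$), each interior sum is dominated by the corresponding expression built from the untilded coefficients $a^{\epsilon'}_{j+s,k'}(t)$, multiplied by a scale-shift penalty $2^{-|s|(n/2+N_{0})}\cdot 2^{C_{0}|s|}$, where $C_{0}=C_{0}(\gamma_{1},\gamma_{2},n,p,q,\beta,m,m')$ arises from translating the weights $2^{jq(\gamma_{1}+n/2-n/p)}$ and $(t2^{2j\beta})^{m},(t2^{2j\beta})^{m'}$ from scale $j$ to scale $j+s$. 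Choosing $N_{0}$ large enough so that $n/2+N_{0}>C_{0}$ renders the geometric sum $\sum_{s\in\mathbb{Z}}2^{-|s|(n/2+N_{0}-C_{0})}$ convergent, yielding the desired bound
$$\|R_{l}a\|_{\B^{\gamma_{1},\gamma_{2}}_{p,q,m,m'}}\lesssim \|a\|_{\B^{\gamma_{1},\gamma_{2}}_{p,q,m,m'}}.$$

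The main obstacle I anticipate is not the cross-scale decay, which is supplied by Lemma \ref{le8}, but rather a careful bookkeeping in the pieces $III$ and $IV$: there both the integration range ($[2^{-2j\beta},r^{2\beta}]$ or $[0,2^{-2j\beta}]$) and the weight $(t2^{2j\beta})^{m}$ or $(t2^{2j\beta})^{m'}$ are tied to the scale $j$, and after reindexing $j\mapsto j+s$ these intervals and weights no longer align with those naturally associated to $a^{\epsilon'}_{j+s,k'}(t)$. This forces a further split of the $t$-integral into the subregions where $t2^{2(j+s)\beta}$ is smaller or larger than $1$, with each piece re-estimated via $e^{-\tilde c t 2^{2j\beta}}\lesssim (t2^{2j\beta})^{-\tau}$ to convert exponential decay into polynomial decay before summing in $s$. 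Once this accounting is carried out in parallel with the four Situations of the proof of Theorem \ref{th1}, the continuity of $R_{l}$ on $\B^{\gamma_{1},\gamma_{2}}_{p,q,m,m'}$ follows.
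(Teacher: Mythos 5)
Your overall plan—invoke Lemma \ref{le8} for the matrix elements of $R_l$, then run the four-piece estimate with the $q\le p$ versus $q>p$ split and sums over translates $w$—is the same skeleton the paper uses, and the Picard-principle framing is correct. But you have missed the observation that makes the paper's proof short, and your proposed fix for pieces $III$ and $IV$ rests on an exponential factor that is not actually present.

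The paper uses the Meyer wavelet basis precisely because $\widehat{\Phi^{\epsilon'}}$ is supported in an annulus for $\epsilon'\in E_n$, and the Riesz multiplier $-i\xi_l/|\xi|$ does not change that support. Consequently $\langle R_l\Phi^{\epsilon'}_{j',k'},\Phi^{\epsilon}_{j,k}\rangle=0$ unless $|j-j'|\le1$, so the coefficient expansion reads
$b^{\epsilon}_{j,k}(t)=\sum_{|j-j'|\le1}\sum_{\epsilon',k'}a^{\epsilon,\epsilon'}_{j,k,j',k'}g^{\epsilon'}_{j',k'}(t)$
and the cross-scale sum you are laboring over is a bounded sum of three terms. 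There is then no scale-shift penalty, no choice of $N_0$ relative to a constant $C_0$, no mismatch between $[2^{-2j\beta},r^{2\beta}]$ and $[2^{-2j'\beta},r^{2\beta}]$ (they differ by at most a factor $2^{\pm2\beta}$), and no need to sum a geometric series in $s$. The only almost-diagonality that must be handled is the purely spatial one in $k,k'$, and that is done exactly as in the four Situations of Theorem \ref{th1}—which is what the paper means by "the rest of the proof is similar."

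Within your more general scheme (summing over all $j'$), the fix you propose for $III$ and $IV$ does not work: you suggest re-estimating via $e^{-\tilde c t 2^{2j\beta}}\lesssim(t2^{2j\beta})^{-\tau}$, but the wavelet coefficients of $R_l g$ carry no such exponential factor. That factor appears in the proof of Theorem \ref{th1} only because Lemma \ref{le4} gives it to the heat semigroup kernel; the Riesz transform matrix elements in Lemma \ref{le8}(i) decay only polynomially in space and exponentially in $|j-j'|$, not in $t2^{2j\beta}$. So the further split of the $t$-integral you describe is not supported by anything you actually have in hand, and the mismatch between the $j$-dependent integration ranges/weights and the $j'$-natural ones would have to be controlled purely by taking $N_0$ large—possible, but not by the mechanism you wrote, and in any case entirely unnecessary once you use that the Meyer basis truncates the scale sum to $|j-j'|\le1$.
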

\begin{proof}
For the sake of convenience, we choose the classical Meyer wavelet basis
$\{\Phi^{\epsilon}_{j,k}\}_{(\epsilon,j,k)\in\Lambda_{n}}$. For
any $g(\cdot,\cdot)\in\B^{\gamma_{1},\gamma_{2}}_{p,q,m,m'}$ and $l=1,2,...,n$, we
need to prove $(R_{l}g)(\cdot,\cdot)\in
\B^{\gamma_{1},\gamma_{2}}_{p,q,m,m'}$. Write
$g(t,x)=\sum\limits_{(\epsilon,j,k)\in\Lambda_{n}}g^{\epsilon}_{j,k}(t)\Phi^{\epsilon}_{j,k}(x)$.
Then
$$\begin{array}{rl}
&R_{l}g(t,x)=\sum\limits_{(\epsilon,j,k)\in\Lambda_{n}}g^{\epsilon}_{j,k}(t)R_{l}\Phi^{\epsilon}_{j,k}(x)
=:\sum\limits_{(\epsilon,j,k)\in\Lambda_{n}}b^{\epsilon}_{j,k}(t)\Phi^{\epsilon}_{j,k}(x),
\end{array}$$
where  $b^{\epsilon}_{j,k}(t)$ is defined by
$$\begin{array}{rl}
b^{\epsilon}_{j,k}(t)
&=\ \sum\limits_{(\epsilon',j',k')\in\Lambda_{n}}g^{\epsilon'}_{j',k'}(t)\Big\langle
R_{l}\Phi^{\epsilon'}_{j',k'},\
\Phi^{\epsilon}_{j,k}\Big\rangle=:\ \sum\limits_{|j-j'|\leq1}\sum\limits_{\epsilon',k'}a^{\epsilon,\epsilon'}_{j,k,j',k'}g^{\epsilon'}_{j',k'}(t).
\end{array}$$
Because  $R_{l}$ is a Calder\'on-Zygmund operator, by (\ref{eq7}) we get
$$\begin{array}{rl}
|a^{\epsilon,\epsilon'}_{j,k,j',k'}|\lesssim2^{-|j-j'|(\frac{n}{2}+N_{0})}
\Big(\frac{2^{-j}+2^{-j'}}{2^{-j}+2^{-j'}+|2^{-j}k-2^{-j'}k'|}\Big)^{n+N_{0}}.
\end{array}$$
The rest of the proof is similar to Theorem \ref{th1}. We omit the details.
\end{proof}

\section{Nonlinear terms and their a prior
estimates}\label{sec5}

\subsection{Decompositions of non-linear terms} From now on, let
$$
\begin{cases}
u(t,x)=\sum\limits_{(\epsilon,j,k)\in
\Lambda_n} u^{\epsilon}_{j,k}(t) \Phi^{\epsilon}_{j,k}(x);\\
v(t,x)=\sum\limits_{(\epsilon,j,k)\in \Lambda_n}
v^{\epsilon}_{j,k}(t) \Phi^{\epsilon}_{j,k}(x).
\end{cases}
$$
For $l=1,\cdots, n$, we will derive some inequalities about
$$\begin{array}{rl}
B_{l}(u,v)(t,x)=&\int^{t}_{0} e^{-(t-s)(-\Delta)^{\beta}}
\frac{\partial}{\partial x_{l}}(uv) ds.\end{array}
$$
Here, it is worth pointing out that (\ref{eq:decompose}) gives
$$\begin{array}{rl}
e^{-(t-s)(-\Delta)^{\beta}} \frac{\partial}{\partial
x_{l}}(uv)(s,t,x)=\sum\limits_{j'\in\mathbb{Z}}\sum\limits_{i=1}^{4}I^{i,l}_{j'}(s,t,x),
\end{array}$$
where
$$\begin{array}{rcl}
I^{1,l}_{j'}(u,v)(s,t,x)&=& \sum\limits_{\epsilon',k'}
\sum\limits_{k''}
 u^{\epsilon'}_{j',k'}(s) v^{0}_{j'-3,k''}(s)\\
&&\quad\quad\times\ \ e^{-(t-s)(-\Delta)^{\beta}}
\frac{\partial}{\partial x_{l}}
(\Phi^{\epsilon'}_{j',k'}(x)\Phi^{0}_{j'-3,k''}(x)), \\
I^{2,l}_{j'}(u,v)(s,t,x)&=&\sum\limits_{\epsilon',k'}
\sum\limits_{\epsilon'',k''} u^{\epsilon'}_{j',k'}(s)
v^{\epsilon''}_{j',k''}(s)\\
&&\quad\quad \times\ \ e^{-(t-s)(-\Delta)^{\beta}}
\frac{\partial}{\partial x_{l}}
(\Phi^{\epsilon'}_{j',k'}(x)\Phi^{\epsilon''}_{j',k''}(x)),\\
I^{3,l}_{j'}(u,v)(s,t,x)&=& \sum\limits_{0<|j'- j''|\leq 3}
\sum\limits_{\epsilon',k'} \sum\limits_{\epsilon'',k''}
 u^{\epsilon'}_{j',k'}(s) v^{\epsilon''}_{j'',k''}(s)\\
&&\quad\quad\times\ \ e^{-(t-s)(-\Delta)^{\beta}}
\frac{\partial}{\partial x_{l}}
(\Phi^{\epsilon'}_{j',k'}(x)\Phi^{\epsilon''}_{j'',k''}(x)),\\
I^{4,l}_{j'}(u,v)(s,t,x)&=& \sum\limits_{\epsilon',k'}
\sum\limits_{k''} v^{\epsilon'}_{j',k'}(s) u^{0}_{j'-3,k''}(s)\\
&&\quad\quad\times e^{-(t-s)(-\Delta)^{\beta}}
\frac{\partial}{\partial x_{l}}
(\Phi^{\epsilon'}_{j',k'}(x)\Phi^{0}_{j'-3,k''}(x)).
\end{array}$$
Hence
$$\begin{array}{rl}
B_{l}(u,v)(t,x)
&=:\ \int^{t}_{0}\sum\limits_{j'\in\mathbb{Z}}\sum\limits_{i=1}^{4}I^{i,l}_{j'}(s,t,x)ds
=:\ \sum\limits_{i=1}^{4}\int^{t}_{0}I^{i}_{l}(s,t,x)ds.\\
\end{array}$$
Therefore, we can write
\begin{equation}\label{eq:de}
\begin{array}{rl}
B_{l}(u,v)(t,x) :=& \sum\limits^{4}_{i=1} I^{i}_{l}(u,v)(t,x),
\end{array}
\end{equation}
where
$$\begin{array}{rl}
I^{i}_{l}(u,v)(t,x)=\int^{t}_{0}I^{i}_{l}(s,t,x)ds.
\end{array}$$
In order to estimate the bilinear term $B(u,v)$ in some suitable function spaces on
$\mathbb{R}^{n}$, we are required to decompose the terms $I^{i}_{l}(u,v)(t,x)$,
$i=1,2,\cdots, 4$, respectively.

{\bf Decomposition of $I^{1}_{l}(u,v)(t,x)$.} The term $I^{1}_{l}(u,v)(t,x)$ is decomposed according to two cases.

Case $[I^{1}_{l}]_1$: $t\geq 2^{-2j\beta}$. For this case, we write
$I^1_l(u,v)(t,x)$ as the sum of the following three terms:
$$\begin{array}{rl}
I^{1}_l(u,v)(t,x) =&\!\!\!
%%%%%%%%%%%%%%%%%%%%%%%%%%%%%%%%%%%%%%%%%%%%%%%%%%%%%%%
\sum\limits_{\epsilon',j',k'} \sum\limits_{k''}
\Big(\int^{2^{-1-2j'\beta}}_{0}+\int^{\frac{t}{2}}_{2^{-1-2j'\beta}}+\int_{\frac{t}{2}}^{t}\Big) \Big\{u^{\epsilon'}_{j',k'}(s)
v^{0}_{j'-3,k''}(s)\\
&\quad\times\ \ e^{-(t-s)(-\Delta)^{\beta}}
\frac{\partial}{\partial x_{l}}
(\Phi^{\epsilon'}_{j',k'}(x)\Phi^{0}_{j'-3,k''}(x))\Big\}ds\\
=:& I^{1,1}_l(u,v)(t,x)+I^{1,2}_l(u,v)(t,x)+I^{1,3}_l(u,v)(t,x).
%%%%%%%%%%%%%%%%%%%%%%%%%%%%%%%%%%%%%%%%%%%%%%%%%%%
\end{array}$$
%%%%%%%%%%%%%%%%%%%
%%%%%%%%%%%%%%%%%%

For $i=1,2,3$, denote
$$\begin{array}{rl}
I^{1,i}_l(u,v)(t,x) = \sum\limits_{(\epsilon,j,k)\in \Lambda_n}
a^{\epsilon,i}_{j,k}(t) \Phi^{\epsilon}_{j,k}(x). \end{array}$$
%%%%%%%%%%%%%%%%%%%%%

Case $[I^{1}_{l}]_2$: $t<2^{-2j\beta}$. For this case , we denote
$a^{\epsilon,4}_{j,k}(t)=a^{\epsilon}_{j,k}(t)$ and then have
$$\begin{array}{rl}I^1_l(u,v)(t,x) = \sum\limits_{(\epsilon,j,k)\in \Lambda_n}
a^{\epsilon,4}_{j,k}(t) \Phi^{\epsilon}_{j,k}(x).\end{array}$$

{\bf Decomposition of $I^{2}_{l}(u,v)(t,x)$.} The decomposition of
$I^{2}_{l}(u,v)(t,x)$ is made according to two cases.

Case $[I^{2}_{l}]_1$: $t\geq 2^{-2j\beta}$. Naturally, $I^2_l(u,v)(t,x)$ can be divided into
the following three terms:
$$\begin{array}{rl}
I^{2}_l(u,v)(t,x) =&\!\!\! \sum\limits_{j'}
\sum\limits_{\epsilon',k'} \sum\limits_{\epsilon'',k''}
\Big(\int^{2^{-1-2j'\beta}}_{0}+\int^{\frac{t}{2}}_{2^{-1-2j'\beta}}+\int_{\frac{t}{2}}^{t}\Big)
\Big\{u^{\epsilon'}_{j',k'}(s) v^{\epsilon''}_{j',k''}(s)\\
&\quad\times\ \ e^{-(t-s)(-\Delta)^{\beta}}  \frac{\partial}{\partial
x_{l}}
(\Phi^{\epsilon'}_{j',k'}(x)\Phi^{\epsilon''}_{j',k''}(x))\Big\}ds\\
=:&\!\!\! I^{2,1}_l(u,v)(t,x)+I^{2,2}_l(u,v)(t,x)+I^{2,3}_l(u,v)(t,x).
\end{array}$$

Case $[I^{2}_{l}]_2$: $t\leq 2^{-2j\beta}$. This $I^{2}_{l}(u,v)(t,x)$ can be decomposed
into the sum of $II^{4}(u,v)(t,x)$ and $II^{5}(u,v)(t,x)$, where
$$\begin{array}{rl}
I^{2}_l(u,v)(t,x) =&\!\!\! \sum\limits_{j'}
\sum\limits_{\epsilon',k'} \sum\limits_{\epsilon'',k''}
\Big(\int^{2^{-2j'\beta}}_{0}+\int^{t}_{2^{-2j'\beta}}\Big)
\Big\{u^{\epsilon'}_{j',k'}(s) v^{\epsilon''}_{j',k''}(s)\\
&\quad\times e^{-(t-s)(-\Delta)^{\beta}}  \frac{\partial}{\partial
x_{l}}
(\Phi^{\epsilon'}_{j',k'}(x)\Phi^{\epsilon''}_{j',k''}(x))\Big\}ds\\
=:&\!\!\!I^{2,4}_l(u,v)(t,x)+I^{2,5}_l(u,v)(t,x).
\end{array}$$
For $i=1,2,3,4,5$, set
$$\begin{array}{rl}I^{2,i}_l(u,v)(t,x) = \sum\limits_{(\epsilon,j,k)\in \Lambda_n} b^{\epsilon,i}_{j,k}(t)
\Phi^{\epsilon}_{j,k}(x).\end{array}$$

{\bf Decompositions of $I^{3}_{l}(u,v)(t,x)$.} Similarly, we have the following two cases:

Case $[I^{3}_{l}]_1$: $t\geq 2^{-2j\beta}$. This $I^3_l(u,v)(t,x)$ can be divided into
the following three terms:
$$\begin{array}{rl}
I^{3}_l(u,v)(t,x) =&\!\!\! \sum\limits_{0<|j'- j''|\leq 3}
\sum\limits_{\epsilon',k'}
\sum\limits_{\epsilon'',k''}\Big(\int^{2^{-1-2j'\beta}}_{0}+\int^{\frac{t}{2}}_{2^{-1-2j'\beta}}+\int_{\frac{t}{2}}^{t}\Big)
\Big\{u^{\epsilon'}_{j',k'}(s) v^{\epsilon''}_{j'',k''}(s)\\
&\quad\times\ \ e^{-(t-s)(-\Delta)^{\beta}}  \frac{\partial}{\partial
x_{l}}
(\Phi^{\epsilon'}_{j',k'}(x)\Phi^{\epsilon''}_{j'',k''}(x))\Big\}ds\\
=:&\!\!\!I^{3,1}_l(u,v)(t,x)+I^{3,2}_l(u,v)(t,x) +I^{3,3}_l(u,v)(t,x).
\end{array}$$

Case $[I^{3}_{l}]_2$: $t\leq 2^{-2j\beta}$. This $I^{3}_{l}(u,v)(t,x)$ can be decomposed
into the sum of $II^{4}(u,v)(t,x)$ and $II^{5}(u,v)(t,x)$, where
$$\begin{array}{rl}
I^{3}_l(u,v)(t,x) =&\!\!\! \sum\limits_{0<|j'- j''|\leq 3}
\sum\limits_{\epsilon',k'} \sum\limits_{\epsilon'',k''}
\Big(\int^{2^{-2j'\beta}}_{0}+\int^{t}_{2^{-2j'\beta}}\Big)
\Big\{u^{\epsilon'}_{j',k'}(s) v^{\epsilon''}_{j'',k''}(s)\\
&\quad\times\ \ e^{-(t-s)(-\Delta)^{\beta}}  \frac{\partial}{\partial
x_{l}}
(\Phi^{\epsilon'}_{j',k'}(x)\Phi^{\epsilon''}_{j'',k''}(x))\Big\}ds\\
=:&\!\!\!I^{3,4}_l(u,v)(t,x)+I^{3,5}_l(u,v)(t,x).
\end{array}$$
For $i=1,2,3,4,5$, denote
$$\begin{array}{rl}I^{3,i}_l(u,v)(t,x) = \sum\limits_{(\epsilon,j,k)\in \Lambda_n} b^{\epsilon,i}_{j,k}(t)
\Phi^{\epsilon}_{j,k}(x).\end{array}$$

{\bf Decomposition of $I^{4,l}_{j}(u,v)(t,x)$.} It is easy to
see that the terms $I^{1, l}_{j}(u,v)(t,x)$ and
$I^{4,l}_{j}(u,v)(t,x)$ are symmetric associated with $u(t,x)$ and
$v(t,x)$. Hence for $I^{4}_{l}(u,v)$ we have a similar
decomposition.

Case $[I^{4}_{l}]_1$: $t\geq 2^{-2j\beta}$. For this case, we write
$I^{4}_{l}(u,v)(t,x)$ as the sum of the following three terms:
$$\begin{array}{rl}
I^{4}_l(u,v)(t,x) =&\!\!\!
%%%%%%%%%%%%%%%%%%%%%%%%%%%%%%%%%%%%%%%%%%%%%%%%%%%%%%%
\sum\limits_{\epsilon',j',k'} \sum\limits_{k''}
\Big(\int^{2^{-1-2j'\beta}}_{0}+\int^{\frac{t}{2}}_{2^{-1-2j'\beta}}+\int_{\frac{t}{2}}^{t}\Big)\Big\{ v^{\epsilon'}_{j',k'}(s)
u^{0}_{j'-3,k''}(s)\\
&\quad\quad\times\ \ e^{-(t-s)(-\Delta)^{\beta}}
\frac{\partial}{\partial x_{l}}
(\Phi^{\epsilon'}_{j',k'}(x)\Phi^{0}_{j'-3,k''}(x))\Big\}ds\\
=:&\!\!\!I^{4,1}_l(u,v)(t,x)+I^{4,2}_l(u,v)(t,x)+I^{4,3}_l(u,v)(t,x).
%%%%%%%%%%%%%%%%%%%%%%%%%%%%%%%%%%%%%%%%%%%%%%%%%%%
\end{array}$$
%%%%%%%%%%%%%%%%%%%
%%%%%%%%%%%%%%%%%%

%%%%%%%%%%%%%%%%%%%
For $i=1,2,3$, denote
$$\begin{array}{rl}
I^{4,i}_l(u,v)(t,x) = \sum\limits_{(\epsilon,j,k)\in \Lambda_n}
a^{\epsilon,i}_{j,k}(t) \Phi^{\epsilon}_{j,k}(x). \end{array}$$
%%%%%%%%%%%%%%%%%%%%%

Case $[I^{4}_{l}]_2$: $t<2^{-2j\beta}$. For this case, we denote
$a^{\epsilon,4}_{j,k}(t)=a^{\epsilon}_{j,k}(t)$ and then have
$$\begin{array}{rl}I^1_l(u,v)(t,x) = \sum\limits_{(\epsilon,j,k)\in \Lambda_n}
a^{\epsilon,4}_{j,k}(t) \Phi^{\epsilon}_{j,k}(x).\end{array}$$

\subsection{Induced a prior estimates} In the sequel we are about to dominate the above-defined $a^{\epsilon,i}_{j,k}$, $b^{\epsilon,i}_{j,k}$ by $u^{\epsilon'}_{j',k'}$ and $v^{\epsilon''}_{j',k''}$.

\begin{lemma}\label{le6} There is a constant $\tilde{c}>0$ such that:
\item{\rm(i)} For $i=1, 2$,
$$\begin{array}{rl}
|a^{\epsilon,i}_{j,k}(t)|\lesssim &\!\!\!
 2^{\frac{nj}{2}+j} \sum\limits_{|j-j'|\leq 2}
\sum\limits_{\epsilon',k',k''}
\int_{I_{i}}\frac{|u^{\epsilon'}_{j',k'}(s)|}{(1+ |2^{j-j'}k'-k|)^{N}}\frac{| v^{0}_{j'-3,k''}(s)|}{(1+|2^{j-j'+3}k''-k'|)^{N}}
 e^{-\tilde c t2^{2j\beta}}ds,
\end{array}$$
where $I_{1}=[0, 2^{-1-2j'\beta}]$ and $I_{2}=[2^{-1-2j'\beta}, \frac{t}{2}]$.

\item{\rm(ii)} For $i=3, 4$,
$$\begin{array}{rl}
|a^{\epsilon,i}_{j,k}(t)|\lesssim &\!\!\!
2^{\frac{nj}{2}+j} \sum\limits_{|j-j'|\leq 2}
\sum\limits_{\epsilon',k',k''}
\int_{I_{i}}\frac{|u^{\epsilon'}_{j',k'}(s)|}{(1+ |2^{j-j'}k'-k|)^{N}}\frac{| v^{0}_{j'-3,k''}(s)|}{(1+|2^{j-j'+3}k''-k'|)^{N}}
e^{-\tilde c (t-s)2^{2j\beta}}ds,
\end{array}$$
where $I_{3}=[\frac{t}{2}, t]$ and $I_{4}=[0, t]$.

\end{lemma}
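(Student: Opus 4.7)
The plan is to unpack the definition
\[
a^{\epsilon,i}_{j,k}(t)=\bigl\langle I^{1,i}_l(u,v)(t,\cdot),\,\Phi^\epsilon_{j,k}\bigr\rangle
\]
(and analogously for $I^{4,i}_l$), which by the Bony-type paraproduct decomposition (\ref{eq:decompose}) reads as a sum over $\epsilon',j',k',k''$ of $u^{\epsilon'}_{j',k'}(s)v^{0}_{j'-3,k''}(s)$ multiplied by the scalar ``kernel''
\[
K(t-s):=\Bigl\langle e^{-(t-s)(-\Delta)^{\beta}}\tfrac{\partial}{\partial x_l}\bigl(\Phi^{\epsilon'}_{j',k'}\Phi^{0}_{j'-3,k''}\bigr),\,\Phi^{\epsilon}_{j,k}\Bigr\rangle,
\]
integrated in $s$ over the appropriate interval $I_i$. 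So the whole lemma reduces to showing that, up to a factor $2^{nj/2+j}$, this $K(t-s)$ is bounded in absolute value by $e^{-\tilde c(t-s)2^{2j\beta}}(1+|2^{j-j'}k'-k|)^{-N}(1+|2^{j-j'+3}k''-k'|)^{-N}$ provided $|j-j'|\le 2$, and vanishes otherwise. The replacement of $e^{-\tilde c(t-s)2^{2j\beta}}$ by $e^{-\tilde c\, t\, 2^{2j\beta}}$ in the cases $i=1,2$ will then come for free from the constraints on $s$.

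To prove this kernel bound, I first move everything to the Fourier side, writing
\[
K(t-s)=\int e^{-(t-s)|\xi|^{2\beta}}(i\xi_l)\,\widehat{\Phi^{\epsilon'}_{j',k'}\Phi^{0}_{j'-3,k''}}(\xi)\,\overline{\widehat{\Phi^\epsilon_{j,k}}(\xi)}\,d\xi.
\]
Since $\widehat{\Phi^{\epsilon'}}$ is supported in the Meyer ring $|\xi|\approx 2\pi$ and $\widehat{\Phi^{0}}$ is supported in the low-frequency ball, the convolution $\widehat{\Phi^{\epsilon'}_{j',k'}\Phi^{0}_{j'-3,k''}}$ is frequency-localized around $|\xi|\approx 2^{j'}$. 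Coupled with the frequency localization of $\widehat{\Phi^\epsilon_{j,k}}$ around $|\xi|\approx 2^{j}$, this forces $|j-j'|\le 2$ for a nonzero answer. On this support $|\xi|^{2\beta}\gtrsim 2^{2j\beta}$, hence $e^{-(t-s)|\xi|^{2\beta}}\le e^{-\tilde c(t-s)2^{2j\beta}}$, which gives the exponential time-decay factor and, together with the derivative multiplier $\xi_l$, the factor $2^{j}$.

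To extract the two polynomial spatial decays I rescale $\xi=2^{j}\eta$, which produces the $L^2$-normalizing factor $2^{nj/2}$ and an integrand of the form
\[
e^{-(t-s)2^{2j\beta}|\eta|^{2\beta}}\,e^{i(k-2^{j-j'}k')\cdot\eta}\,e^{i(2^{j-j'}k'-2^{j-j'+3}k'')\cdot\eta}\,\Psi(\eta,2^{j-j'},2^{j-j'+3}),
\]
where $\Psi$ is a smooth, compactly supported symbol in $\eta$ (uniformly in the bounded parameters $2^{j-j'},2^{j-j'+3}$). Proceeding as in the proof of Lemma \ref{le4}, I would select the largest component of $k-2^{j-j'}k'$ and integrate by parts $N$ times against the first oscillatory factor, then select the largest component of $2^{j-j'}k'-2^{j-j'+3}k''$ and integrate by parts $N$ more times against the second oscillatory factor. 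Each integration by parts costs at most $O(1)$ because the $\eta$-derivatives of the symbol $\Psi$ and of $e^{-(t-s)2^{2j\beta}|\eta|^{2\beta}}$ remain bounded (the exponential decays and absorbs any polynomial growth from $(t-s)2^{2j\beta}$ of the kind already treated in Lemma~\ref{le4}). This yields the two required decay factors and completes the bound on $K(t-s)$.

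Finally, to convert $e^{-\tilde c(t-s)2^{2j\beta}}$ into $e^{-\tilde c\,t\,2^{2j\beta}}$ for $i=1,2$, I use that on those intervals $s\le t/2$ (with room to spare, once the ratio of $2^{-2j'\beta}$ to $2^{-2j\beta}$ for $|j-j'|\le 2$ is absorbed into $\tilde c$), so $t-s\ge t/2$; on $I_3=[t/2,t]$ and $I_4=[0,t]$ with $t<2^{-2j\beta}$ there is no improvement and the estimate remains in the form $e^{-\tilde c(t-s)2^{2j\beta}}$. The main obstacle I anticipate is not the Fourier-localization step, which is essentially already recorded in Lemma \ref{le9} and Lemma \ref{le4}, but the bookkeeping of the two independent spatial oscillations: one has to iterate integration by parts in two different $\eta$-directions chosen adaptively from the largest components of $k-2^{j-j'}k'$ and of $2^{j-j'}k'-2^{j-j'+3}k''$, and verify that the mixed partial derivatives of $\Psi$ together with the $\eta$-derivatives of the heat symbol still remain of order one in the regime that matters. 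Once this is handled, substituting the kernel estimate back into the expressions defining $a^{\epsilon,i}_{j,k}(t)$ yields the four bounds (i)--(ii) of the lemma.
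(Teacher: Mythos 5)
Your proposal follows essentially the same route as the paper's own proof: write $a^{\epsilon,i}_{j,k}(t)$ as the $s$-integral over $I_i$ of $u^{\epsilon'}_{j',k'}(s)v^0_{j'-3,k''}(s)$ times the scalar Fourier pairing, rescale $\xi\to 2^j\xi$ to pull out $2^{nj/2+j}$, note the Meyer ring support forces $|j-j'|\le 2$ and gives the exponential decay, and obtain the two polynomial decays by integrating by parts $N$ times in the direction of the largest component of $k-2^{j-j'}k'$ (against $\xi$) and then of $k'-8k''$ (against the inner $\eta$-variable coming from the convolution $\widehat{\Phi^{\epsilon'}_{j',k'}\Phi^0_{j'-3,k''}}$). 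The paper carries out exactly these two integrations by parts, organizing them into the four cases $|2^{j-j'}k'-k|\lessgtr 2$, $|k'-8k''|\lessgtr 2$, and handles the growth $(t2^{2j\beta})^l$ from differentiating the heat symbol by absorbing it into $e^{-ct2^{2j\beta}}$ using the ring support — precisely the point you flagged as the main bookkeeping step. Your observation that $s\le t/2$ on $I_1,I_2$ allows replacing $e^{-\tilde c(t-s)2^{2j\beta}}$ by $e^{-\tilde c\,t\,2^{2j\beta}}$ also matches the paper's use of $(t-s)\sim t$ when $0<s<2^{-1-2j'\beta}$.
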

\begin{proof}
$$\begin{array}{rl}
a^{\epsilon,1}_{j,k}(t)=&\!\!\!\Big\langle I^{1,1}_{l}(u,v),
\Phi^{\epsilon}_{j,k}\Big\rangle\\
=&\!\!\!\sum\limits_{\epsilon',k',k''}~\sum\limits_{|j-j'|\leq2}\int^{2^{-1-2j'\beta}}_{0}\Big\{u^{\epsilon'}_{j',k'}(s)v^{0}_{j'-3,k''}(s)\\
&\quad\quad\Big\langle
e^{-(t-s)(-\Delta)^{\beta}}\frac{\partial}{\partial
x_{l}}(\Phi^{\epsilon'}_{j',k'}\Phi^{0}_{j'-3,k''}),\
\Phi^{\epsilon}_{j,k}\Big\rangle \Big\}ds.
\end{array}$$
The Fourier transform gives
$$\begin{array}{rl}
&\Big\langle
e^{-(t-s)(-\Delta)^{\beta}}\frac{\partial}{\partial
x_{l}}(\Phi^{\epsilon'}_{j',k'}\Phi^{0}_{j'-3,k''}),\
\Phi^{\epsilon}_{j,k}\Big\rangle\\
&=\int e^{-(t-s)|\xi|^{2\beta}}\xi_{l}\widehat{(\Phi^{\epsilon'}_{j',k'}\Phi^{0}_{j'-3,k''})}(\xi)
2^{-jn/2}e^{-i2^{-j}k\xi}\widehat{\Phi^{\epsilon}}(2^{-j}\xi)d\xi\\
&=\int e^{-(t-s)|\xi|^{2\beta}}\xi_{l}e^{-i2^{-j'}k'\xi}\Big[\int e^{ik'\eta}\widehat{\Phi^{\epsilon'}}(2^{-j'}\xi-\eta)e^{-8ik''\eta}\widehat{\Phi^{0}}(8\eta)d\eta\Big]\\
&\quad\times 2^{-jn/2}e^{-i2^{-j}k\xi}\widehat{\Phi^{\epsilon}}(2^{-j}\xi)d\xi.
\end{array}$$
Because $0<s<2^{-1-2j'\beta}$, we can see $(t-s)\sim t$. Hence
$$\begin{array}{rl}
&\Big\langle
e^{-(t-s)(-\Delta)^{\beta}}\frac{\partial}{\partial
x_{l}}(\Phi^{\epsilon'}_{j',k'}\Phi^{0}_{j'-3,k''}),\
\Phi^{\epsilon}_{j,k}\Big\rangle\\
&=2^{jn/2+j}\int e^{-(t-s)2^{2j\beta}|\xi|^{2\beta}}\xi_{l}e^{-i(k-2^{j-j'}k')\xi}\Big[\int \widehat{\Phi^{\epsilon'}}(2^{j-j'}\xi-\eta)\widehat{\Phi^{0}}(8\eta)\\
&\quad\times e^{-i(k-8k'')\eta}d\eta\Big] \widehat{\Phi^{\epsilon}}(\xi)d\xi\\
&=2^{jn/2+j}\int e^{-t2^{2j\beta}|\xi|^{2\beta}}\xi_{l}e^{-i(k-2^{j-j'}k')\xi}\Big[\int \widehat{\Phi^{\epsilon'}}(2^{j-j'}\xi-\eta)\widehat{\Phi^{0}}(8\eta)\\
&\quad\times e^{-i(k-8k'')\eta}d\eta\Big] \widehat{\Phi^{\epsilon}}(\xi)d\xi.
\end{array}$$

{\bf Situation I: } We first consider the case: $|2^{j-j'}k'-k|\leq 2$. We can see $$(1+|2^{j-j'}k'-k|)^{-N}\gtrsim 1.$$
 Under this situation, we divide the argument into two cases.

{\it Case 1:} $|k'-8k''|\leq 2$. For any positive integer $N$, $$(1+|2^{j-j'+3}k'-k''|)^{-N}\gtrsim 1.$$
On the other hand, the support of $\widehat{\Phi^{\epsilon}}(\xi)$ is a ring. A direct computation derives
$$\begin{array}{rl}
&\Big|\Big\langle
e^{-(t-s)(-\Delta)^{\beta}}\frac{\partial}{\partial
x_{l}}(\Phi^{\epsilon'}_{j',k'}\Phi^{0}_{j'-3,k''}),\
\Phi^{\epsilon}_{j,k}\Big\rangle\Big|\\
&\lesssim e^{-\tilde c t2^{2j\beta}}2^{jn/2+j}  (1+ |2^{j-j'}k'-k|)^{-N}
(1+|2^{j-j'+3}k''-k'|)^{-N}.
\end{array}$$

{\it Case 2:} $|k'-8k''|\geq 2$. Denote by $l_{j_{0}}$ the largest component of $k'-8k''$. We have
$$\begin{array}{rl}
&\Big|\int \widehat{\Phi^{\epsilon'}}(2^{j-j'}\xi-\eta)\widehat{\Phi^{0}}(8\eta) e^{-i(k-8k'')\eta}d\eta\Big|\\
&\lesssim\frac{1}{(1+|k'-8k''|)^{N}}\Big|\int\widehat{\Phi^{\epsilon'}}(2^{j-j'}\xi-\eta)\widehat{\Phi^{0}}(8\eta) (\frac{1}{i}\partial_{\eta_{i_{0}}})^{N}(e^{-i(k-8k'')\eta})d\eta\Big|\\
&\lesssim\frac{1}{(1+|k'-8k''|)^{N}}\Big|\int\sum\limits_{l=0}^{N}C^{l}_{N}\partial_{\eta_{i_{0}}}^{l}(\widehat{\Phi^{\epsilon'}}(2^{j-j'}\xi-\eta))
\partial_{\eta_{i_{0}}}^{N-l}(\widehat{\Phi^{0}}(8\eta)) e^{-i(k-8k'')\eta}d\eta\Big|\\
&\lesssim\frac{1}{(1+|k'-8k''|)^{N}},
\end{array}$$
which gives
$$\begin{array}{rl}
&\Big|\Big\langle
e^{-(t-s)(-\Delta)^{\beta}}\frac{\partial}{\partial
x_{l}}(\Phi^{\epsilon'}_{j',k'}\Phi^{0}_{j'-3,k''}),\
\Phi^{\epsilon}_{j,k}\Big\rangle\Big|\\
&\lesssim 2^{jn/2+j}e^{-\tilde c t2^{2j\beta}}  (1+ |2^{j-j'}k'-k|)^{-N}
(1+|2^{j-j'+3}k''-k'|)^{-N}.
\end{array}$$

{\bf Situation II: } We then consider the case: $|2^{j-j'}k'-k|\geq 2$. We still divide the discussion into the following two cases.

{\it Case 3: } $|k'-8k''|\leq 2$. Denote by $k_{i_{0}}$ the largest component of $2^{j-j'}k'-k$. Then
$$(1+|k_{i_{0}}|)^{N}\sim (1+|2^{j-j'}k'-k|)^{N}.$$
This fact implies
$$\begin{array}{rl}
&\Big|\Big\langle
e^{-(t-s)(-\Delta)^{\beta}}\frac{\partial}{\partial
x_{l}}(\Phi^{\epsilon'}_{j',k'}\Phi^{0}_{j'-3,k''}),\
\Phi^{\epsilon}_{j,k}\Big\rangle\Big|\\
&\lesssim\frac{2^{jn/2+j}}{(1+|2^{j-j'}k'-k|)^{N}}\Big|\int (\frac{1}{i}\partial_{\xi_{i_{0}}})^{N}(e^{-i(k-2^{j-j'}k')\xi})e^{-t2^{2j\beta}|\xi|^{2\beta}}\xi_{l}\\
&\quad\times\Big[\int \widehat{\Phi^{\epsilon'}}(2^{j-j'}\xi-\eta)\widehat{\Phi^{0}}(8\eta)
 e^{-i(k-8k'')\eta}d\eta\Big] \widehat{\Phi^{\epsilon}}(\xi)d\xi\Big|\\
&\lesssim\frac{2^{jn/2+j}}{(1+|2^{j-j'}k'-k|)^{N}}\Big|\int e^{-i(k-2^{j-j'}k')\xi}\sum\limits^{N}_{l=0}C^{l}_{N}\partial_{\xi_{i_{0}}}^{l}(e^{-t2^{2j\beta}|\xi|^{2\beta}}\xi_{l})\\
&\quad\times\partial_{\xi_{i_{0}}}^{N-l}\Big(\int \widehat{\Phi^{\epsilon'}}(2^{j-j'}\xi-\eta)\widehat{\Phi^{0}}(8\eta)
 e^{-i(k-8k'')\eta}d\eta\Big) \widehat{\Phi^{\epsilon}}(\xi)d\xi\Big|.\\
\end{array}$$
In the above and below, $C^l_N$ stands for the binomial coefficient indexed by $N$ and $l$.
Because the support of $\widehat{\Phi^{\epsilon}}(\xi)$ is a ring, there exists a small constant $c>0$ such that
$$|\partial_{\xi_{i_{0}}}^{l}(e^{-t2^{2j\beta}|\xi|^{2\beta}}\xi_{l})|\lesssim e^{-ct2^{2j\beta}}.$$
Consequently, we have a constant $\tilde{c}>0$ such that
$$\begin{array}{rl}
&\Big|\Big\langle
e^{-(t-s)(-\Delta)^{\beta}}\frac{\partial}{\partial
x_{l}}(\Phi^{\epsilon'}_{j',k'}\Phi^{0}_{j'-3,k''}),\
\Phi^{\epsilon}_{j,k}\Big\rangle\Big|\\
&\lesssim\frac{2^{jn/2+j}}{(1+|2^{j-j'}k'-k|)^{N}}\Big|\int e^{-i(k-2^{j-j'}k')\xi}\sum\limits^{N}_{l=0}C^{l}_{N}\partial_{\xi_{i_{0}}}^{l}(e^{-t2^{2j\beta}|\xi|^{2\beta}}\xi_{l})\\
&\quad\times\partial_{\xi_{i_{0}}}^{N-l}\Big(\int \widehat{\Phi^{\epsilon'}}(2^{j-j'}\xi-\eta)\widehat{\Phi^{0}}(8\eta)
 e^{-i(k-8k'')\eta}d\eta\Big) \widehat{\Phi^{\epsilon}}(\xi)d\xi\Big|\\
&\lesssim e^{-\tilde c t2^{2j\beta}} 2^{jn/2+j} (1+ |2^{j-j'}k'-k|)^{-N}
(1+|2^{j-j'+3}k''-k'|)^{-N}.
\end{array}$$

{\it Case 4:} $|k'-8k''|\geq 2$. In a similar manner to treat Case 3, we denote by $k_{i_{0}}$ the largest component of $2^{j-j'}k'-k$, and then obtain
$$\begin{array}{rl}
&\Big|\Big\langle
e^{-(t-s)(-\Delta)^{\beta}}\frac{\partial}{\partial
x_{l}}(\Phi^{\epsilon'}_{j',k'}\Phi^{0}_{j'-3,k''}),\
\Phi^{\epsilon}_{j,k}\Big\rangle\Big|\\
&\lesssim\frac{2^{jn/2+j}}{(1+|2^{j-j'}k'-k|)^{N}}\int \sum\limits^{N}_{l=0}C^{l}_{N}\Big|\partial_{\xi_{i_{0}}}^{l}(e^{-t2^{2j\beta}|\xi|^{2\beta}}\xi_{l})\Big|\\
&\quad\times\Big|\int \partial_{\xi_{i_{0}}}^{N-l}\Big(\widehat{\Phi^{\epsilon'}}(2^{j-j'}\xi-\eta)\Big)\widehat{\Phi^{0}}(8\eta)
 e^{-i(k-8k'')\eta}d\eta\Big| |\widehat{\Phi^{\epsilon}}(\xi)|d\xi.
\end{array}$$
As in Case 2, upon choosing $l_{j_{0}}$ as the largest component of $k'-8k’’$, applying an integration-by-parts, and utilizing the fact that $\widehat{\Phi^{\epsilon}}$ is supported on a ring, we can get a constant $\tilde{c}>0$ such that
$$\begin{array}{rl}
&\Big|\Big\langle
e^{-(t-s)(-\Delta)^{\beta}}\frac{\partial}{\partial
x_{l}}(\Phi^{\epsilon'}_{j',k'}\Phi^{0}_{j'-3,k''}),\
\Phi^{\epsilon}_{j,k}\Big\rangle\Big|\\
&\lesssim\frac{2^{jn/2+j}}{(1+|2^{j-j'}k'-k|)^{N}(1+|2^{j-j'+3}k''-k'|)^{N}}\int \sum\limits^{N}_{l=0}C^{l}_{N}\Big|\partial_{\xi_{i_{0}}}^{l}(e^{-t2^{2j\beta}|\xi|^{2\beta}}\xi_{l})\Big|\\
&\quad\times\Big|\int \partial_{\xi_{i_{0}}}^{N-l}\Big(\widehat{\Phi^{\epsilon'}}(2^{j-j'}\xi-\eta)\Big)\widehat{\Phi^{0}}(8\eta)
 (\frac{1}{i}\partial_{\eta_{i_{0}}})^{N}\Big(e^{-i(k'-8k'')\eta}\Big)d\eta\Big| |\widehat{\Phi^{\epsilon}}(\xi)|d\xi\\
&\lesssim  e^{-\tilde c t2^{2j\beta}} 2^{jn/2+j} (1+ |2^{j-j'}k'-k|)^{-N}
(1+|2^{j-j'+3}k''-k'|)^{-N}.
\end{array}$$
This completes the estimate of $a^{\epsilon,1}_{j,k}(t)$. The estimate of $a^{\epsilon,i}_{j,k}(t)$ can be obtained similarly.
\end{proof}

Using the same method, we can obtain the following estimates for $b^{\epsilon,i}_{j,k}(t), i=1,2,3,4,5$.
\begin{lemma}\label{le7}There is a constant $\tilde{c}>0$ such that:

\item{\rm (i)} For $i=1, 2$,
$$\begin{array}{rl}
|b^{\epsilon, i}_{j,k}(t)|\lesssim &\!\!\!
2^{\frac{nj}{2}+j} \sum\limits_{j\leq j'+ 2}
\sum\limits_{\epsilon',k',\epsilon'',k''}
\int_{I_{i}}\frac{|u^{\epsilon'}_{j',k'}(s)|}{(1+ |2^{j-j'}k'-k|)^{N}}\frac{| v^{\epsilon''}_{j',k''}(s)|}{(1+|2^{j-j'}k''-k'|)^{N}}
e^{-\tilde c t2^{2j\beta}}ds,
\end{array}$$
where $I_{1}=[0, 2^{-1-2j'\beta}]$ and $I_{2}=[2^{-1-2j'\beta}, \frac{t}{2}]$.

\item{\rm (ii)} For $i=3, 4, 5$,
$$\begin{array}{rl}
|b^{\epsilon,i}_{j,k}(t)|\lesssim &\!\!\!
2^{\frac{nj}{2}+j} \sum\limits_{j\leq j'+ 2}
\sum\limits_{\epsilon',k',\epsilon'',k''}
\int^{t}_{\frac{t}{2}}\frac{|u^{\epsilon'}_{j',k'}(s)|}{(1+ |2^{j-j'}k'-k|)^{N}}\frac{| v^{\epsilon''}_{j',k''}(s)|}{(1+|2^{j-j'}k''-k'|)^{N}}
e^{-\tilde c (t-s)2^{2j\beta}}ds,\\
\end{array}$$
where $I_{3}=[\frac{t}{2}, t]$, $I_{4}=[0, 2^{-2j'\beta}]$ and $I_{5}=[2^{-2j'\beta}, t]$.

\end{lemma}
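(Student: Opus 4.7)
The proof follows the same template as Lemma \ref{le6}, so the plan is to explain the adaptation rather than redo the calculation in full. The starting point is the Fourier representation
$$b^{\epsilon,i}_{j,k}(t)=\sum_{j'}\sum_{\epsilon',k',\epsilon'',k''}\int_{I_i}u^{\epsilon'}_{j',k'}(s)\,v^{\epsilon''}_{j',k''}(s)\,\bigl\langle e^{-(t-s)(-\Delta)^{\beta}}\partial_{x_l}(\Phi^{\epsilon'}_{j',k'}\Phi^{\epsilon''}_{j',k''}),\,\Phi^{\epsilon}_{j,k}\bigr\rangle\,ds$$
(together with the analogous expression for the $I^{3,i}_l$ piece with $0<|j'-j''|\le 3$), after which one passes to Fourier variables as in Lemma \ref{le6} and unfolds the product in $\Phi^{\epsilon'}_{j',k'}\Phi^{\epsilon''}_{j',k''}$ through the convolution $\widehat{\Phi^{\epsilon'}_{j',k'}}\ast\widehat{\Phi^{\epsilon''}_{j',k''}}$ in an inner variable $\eta$.

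The principal difference from Lemma \ref{le6} is that here both wavelet factors live at high frequency. Convolving two functions whose Fourier transforms sit in the ring $\{|\xi|\sim 2^{j'}\}$ produces a function whose Fourier support lies in $\{|\xi|\lesssim 2^{j'+2}\}$; pairing with $\widehat{\Phi^{\epsilon}_{j,k}}$, supported in the ring $\{|\xi|\sim 2^{j}\}$, therefore forces $j\le j'+2$ (or $j\le\max(j',j'')+2$ in the $I^{3,i}_l$ piece, which reduces to the same constraint since $|j'-j''|\le 3$). This is exactly the scale restriction appearing in the statement.

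To extract the spatial decay I would mimic the four-case split ($|2^{j-j'}k'-k|\lessgtr 2$ versus $|2^{j-j'}k''-k'|\lessgtr 2$) used in Lemma \ref{le6}. In the non-trivial cases, $N$ integrations by parts in $\xi$ against the oscillation $e^{-i(k-2^{j-j'}k')\xi}$ produce $(1+|2^{j-j'}k'-k|)^{-N}$, while $N$ integrations by parts in the inner convolution variable $\eta$ against $e^{-i(2^{j-j'}k''-k')\eta}$ produce $(1+|2^{j-j'}k''-k'|)^{-N}$; the small cases are trivial since the denominators are then $O(1)$. Derivatives land either on $\widehat{\Phi^{\epsilon'}}$, $\widehat{\Phi^{\epsilon''}}$, or on $e^{-(t-s)2^{2j\beta}|\xi|^{2\beta}}\xi_l$, the latter contributing at most polynomial-in-$(t-s)2^{2j\beta}$ corrections times $e^{-c(t-s)2^{2j\beta}}$, which is bounded uniformly by $e^{-\tilde c(t-s)2^{2j\beta}}$ on $|\xi|\sim 1$. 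The prefactor $2^{nj/2+j}$ arises from the change of variables $\xi\mapsto 2^{-j}\xi$ and from the gradient $\partial_{x_l}$, exactly as in Lemma \ref{le6}. Finally, for $i=1,2$ the integration interval satisfies $s\le t/2$, so $t-s\ge t/2$ and $e^{-\tilde c(t-s)2^{2j\beta}}\le e^{-\tilde c t\,2^{2j\beta}}$, giving the sharper form claimed in (i); for $i=3,4,5$ the $(t-s)$-form is retained as stated.

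I expect the most delicate point to be the coupled integration-by-parts. Unlike Lemma \ref{le6}, where one factor was the low-frequency scaling function $\Phi^{0}_{j'-3,k''}$ whose Fourier transform localizes $\eta$ near $0$, here both factors are high-frequency wavelets, so the inner $\eta$-integral must be handled by integration by parts on the same variable that already carries the $\widehat{\Phi^{\epsilon'}}(2^{j-j'}\xi-\eta)$-dependence. Care must be taken to ensure the two decay factors decouple into $(1+|2^{j-j'}k'-k|)^{-N}(1+|2^{j-j'}k''-k'|)^{-N}$ rather than mix into a single coupled factor; this is precisely where the compact Fourier support of the Meyer wavelets is decisive, since it cleanly separates the $\xi$-oscillation from the $\eta$-oscillation.
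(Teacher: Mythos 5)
Your overall plan is essentially what the paper intends; the paper gives no separate proof of Lemma~\ref{le7}, remarking only that it follows by ``the same method'' as Lemma~\ref{le6}, and you correctly identify the key adaptations: both wavelet factors now sit at high frequency, the Fourier support of the product of two ring-supported functions forces the one-sided constraint $j\le j'+2$ (replacing $|j-j'|\le 2$), and for $i=1,2$ one has $s\le t/2$ so $e^{-\tilde c(t-s)2^{2j\beta}}\le e^{-(\tilde c/2)t2^{2j\beta}}$, giving the cleaner form in part (i).

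There is, however, one substantive inaccuracy. The inner $\eta$-oscillation is not $e^{-i(2^{j-j'}k''-k')\eta}$. Writing $\widehat{\Phi^{\epsilon'}_{j',k'}\Phi^{\epsilon''}_{j',k''}}$ as the convolution $\widehat{\Phi^{\epsilon'}_{j',k'}}\ast\widehat{\Phi^{\epsilon''}_{j',k''}}$ and rescaling the inner variable by $2^{j'}$ yields the phase $e^{-i(k'-k'')\eta}$, with no $j$- or $j'$-dependence, so $N$ integrations by parts in $\eta$ produce $(1+|k'-k''|)^{-N}$ rather than $(1+|2^{j-j'}k''-k'|)^{-N}$. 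This is not a cosmetic discrepancy: unlike Lemma~\ref{le6}, where $|j-j'|\le 1$ keeps the two forms comparable, here $j'-j$ is unbounded, and for $j'\gg j$ the printed factor collapses to roughly $(1+|k'|)^{-N}$, i.e.\ decay in the absolute position of $k'$ rather than in the separation $|k'-k''|$. Taking $k'=k''$ with $|k'|$ large makes the true factor equal to $1$ while the printed one is tiny, even though the coefficient depends only on the position of $Q_{j',k'}$ relative to $Q_{j,k}$ -- so the printed bound cannot hold. The factor the paper actually feeds into later estimates, in (\ref{eqn:est3}) and (\ref{eqn:est6}), is $(1+|k'-k''|)^{-N}$, which is what your integration by parts will deliver; the form appearing in the statement of Lemma~\ref{le7} is evidently a typo, and your plan should aim for $(1+|2^{j-j'}k'-k|)^{-N}(1+|k'-k''|)^{-N}$, not the literal formula.
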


Let $Q_{j,k}$ and
$Q_{j',k'}$ be two dyadic cubes, and for $w\in\mathbb{Z}^{n}$ denote by
$Q^{w}_{j,k}$ the dyadic cube $\widetilde{Q}_{j,k}+2^{8-j}w$, where $\widetilde{Q}_{j,k}$ denotes the dyadic cube containing $Q_{j,k}$ with side length $2^{8-j}$. The forthcoming lemmas can be deduced from the Cauchy-Schwartz inequality.

\begin{lemma}\label{inequality1}
\item{\rm (i)}
 For $j, j'\in \mathbb{Z}$ and $w,k,k'\in \mathbb{Z}^{n}$, if
$Q_{j',k'}\subset Q_{j,k}^{w}$, then
\begin{equation}\label{eqn:est1} (1+ |2^{j-j'}k'-k|)^{-N} \lesssim(1+|w|)^{-N}.
\end{equation}
\item{\rm (ii)}
Let $0<j'-j''\leq 3$, $j\leq j'+5$ and $|w-w'|> 2^{n}$. If
$Q_{j',k'}\subset Q_{j,k}^{w}$ and $Q_{j'',k''}\subset
Q_{j,k}^{w'}$, then
\begin{equation}\label{eqn:est2}
(1+|2^{j'-j''}k''-k'|)^{-N}\lesssim 2^{N(j-j')} (1+|w-w'|)^{-N}.
\end{equation}
\end{lemma}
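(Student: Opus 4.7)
The plan is to derive both inequalities from elementary geometric observations about the dyadic grid, with no need for Fourier or semigroup machinery. Recall that $Q_{j,k}$ is a dyadic cube of side length $2^{-j}$ with corner at $2^{-j}k$, and $\widetilde{Q}_{j,k}$ is the unique dyadic cube of side length $2^{8-j}$ that contains $Q_{j,k}$. Hence the corner of $\widetilde{Q}_{j,k}$ has the form $2^{-j}(k-\vec{m})$ for some $\vec{m}\in\{0,1,\dots,2^{8}-1\}^{n}$, and the translated cube $Q^{w}_{j,k}=\widetilde{Q}_{j,k}+2^{8-j}w$ has corner $2^{-j}(k-\vec{m}+2^{8}w)$ and side length $2^{8-j}$. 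Both estimates will follow by tracking the position of $Q_{j',k'}$ (and $Q_{j'',k''}$) inside these translated cubes and then rescaling by an appropriate power of $2$.

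For part (i), the hypothesis $Q_{j',k'}\subset Q^{w}_{j,k}$ lets me write $2^{-j'}k'=2^{-j}(k-\vec{m}+2^{8}w)+\vec{\delta}$ with $\vec{\delta}\in[0,2^{8-j}]^{n}$. Multiplying by $2^{j}$ gives the identity $2^{j-j'}k'-k=-\vec{m}+2^{8}w+2^{j}\vec{\delta}$, and since $|\vec{m}|$ and $|2^{j}\vec{\delta}|$ are each bounded by a universal constant of order $2^{8}\sqrt{n}$, the reverse triangle inequality yields $|2^{j-j'}k'-k|\geq 2^{8}|w|-C$ for some $C=C(n)$. For $|w|$ above a fixed threshold this forces $1+|2^{j-j'}k'-k|\gtrsim 1+|w|$, while for $|w|$ below that threshold both sides of (\ref{eqn:est1}) are comparable to constants and the inequality is trivial. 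This handles (\ref{eqn:est1}).

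Part (ii) is obtained by applying the same decomposition twice, once to $Q_{j',k'}\subset Q^{w}_{j,k}$ and once to $Q_{j'',k''}\subset Q^{w'}_{j,k}$, and subtracting, which yields
\begin{equation*}
\bigl|2^{-j'}k'-2^{-j''}k''-2^{8-j}(w-w')\bigr|\lesssim 2^{8-j}.
\end{equation*}
Multiplying through by $2^{j'}$ produces $\bigl|2^{j'-j''}k''-k'-2^{8+j'-j}(w-w')\bigr|\lesssim 2^{8+j'-j}$, and the assumption $|w-w'|>2^{n}$ guarantees that $|w-w'|$ dominates the $O(1)$ error, so that $|2^{j'-j''}k''-k'|\gtrsim 2^{8+j'-j}|w-w'|$. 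Consequently
\begin{equation*}
(1+|2^{j'-j''}k''-k'|)^{-N}\lesssim \bigl(2^{j'-j}(1+|w-w'|)\bigr)^{-N}=2^{N(j-j')}(1+|w-w'|)^{-N},
\end{equation*}
which is (\ref{eqn:est2}). The conditions $0<j'-j''\leq 3$ and $j\leq j'+5$ play only an auxiliary role, keeping $2^{j'-j''}$ and $2^{j-j'}$ bounded so that the implied constants are uniform.

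The main obstacle is purely bookkeeping: keeping track simultaneously of the discrete offset $\vec{m}$ arising from the embedding $Q_{j,k}\subset\widetilde{Q}_{j,k}$, the continuous offset $\vec{\delta}$ coming from the interior of $Q^{w}_{j,k}$, and the rescaling factor $2^{8}$ built into the definition of $\widetilde{Q}_{j,k}$. In particular, one must verify that the threshold at which the ``large $|w|$'' (resp. ``large $|w-w'|$'') argument takes over is comparable to $2^{n}$, so that the hypothesis $|w-w'|>2^{n}$ in part (ii) is precisely what is needed to absorb the error term into the main term and recover the factor $2^{N(j-j')}$.
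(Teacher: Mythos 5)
Your argument is correct. The paper omits a proof of this lemma (the remark about the Cauchy--Schwarz inequality that precedes it pertains to the subsequent Lemmas 5.4--5.6, not to this purely geometric statement), so there is no official version to compare against; the expected argument is exactly the direct coordinate computation you give. For part (i), writing $2^{-j'}k' = 2^{-j}(k-\vec m + 2^8 w) + \vec\delta$ and isolating $2^{j-j'}k'-k = 2^8 w -\vec m + 2^j\vec\delta$ with $|\vec m|,|2^j\vec\delta|=O(2^8\sqrt n)$ gives $|2^{j-j'}k'-k|\gtrsim |w|$ once $|w|$ exceeds a dimensional constant, and the small-$|w|$ range is trivial. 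For part (ii), subtracting the two decompositions eliminates the common center $k-\vec m$, and multiplying by $2^{j'}$ yields $2^{j'-j''}k''-k' = \pm 2^{8+j'-j}(w-w') + O(2^{8+j'-j}\sqrt n)$; the assumption $|w-w'|>2^n\ (\geq 2\sqrt n)$ makes the main term dominate, giving $1+|2^{j'-j''}k''-k'| \gtrsim 2^{j'-j}(1+|w-w'|)$, and raising to the power $-N$ produces the factor $2^{N(j-j')}$. The side conditions $0<j'-j''\leq 3$ and $j\leq j'+5$ only ensure uniformity of the implied constants, as you observe, and the passage from $|w-w'|$ to $1+|w-w'|$ uses $|w-w'|>2$. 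One cosmetic overstatement: the threshold $|w-w'|>2^n$ is sufficient but not literally ``precisely what is needed''; any fixed threshold of order $\sqrt n$ would do.
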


\begin{lemma}\label{inequality3} Let $Q_{j,k}$ be a dyadic cube with radius $2^{-j}$. For
$w\in\mathbb{Z}^{n}$, set $Q^{w}_{j,k}$ be the dyadic cube
$2^{8-j}w+\widetilde{Q}_{j,k}$. Then
\begin{equation}\label{eqn:est3}
\begin{array}{rl}
&\sum\limits_{\epsilon',k'}\sum\limits_{\epsilon'',k''}|u^{\epsilon'}_{j',k'}(s)||v^{\epsilon''}_{j',k''}(s)|^{p-1}
(1+|2^{j-j'}k'-k|)^{-8N}(1+|k'-k''|)^{-8N}\\
&\lesssim \sum\limits_{w\in\mathbb{Z}^{n}}\sum\limits_{w'\in\mathbb{Z}^{n}}(1+|w|)^{-N}(1+|w'|)^{-N}\\
&\ \times\Big(\sum\limits_{(\epsilon',k')\in S^{w,j'}_{
j,k}}|u^{\epsilon'}_{j',k'}(s)|^{p}\Big)^{\frac{1}{p}}\Big(\sum\limits_{(\epsilon'',k'')\in S^{w',j'}_{j,k}}|v^{\epsilon''}_{j',k''}(s)|^{p} \Big)^{\frac{1}{p'}}.
\end{array}
\end{equation}
\end{lemma}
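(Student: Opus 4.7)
The plan is to partition the double sum over $k',k''$ into dyadic shells indexed by $(w,w')\in\mathbb{Z}^n\times\mathbb{Z}^n$, then use Lemma \ref{inequality1}(i) together with an analogous geometric estimate to extract the decay factors $(1+|w|)^{-N}(1+|w'|)^{-N}$, and finally invoke H\"older's inequality with conjugate exponents $p$ and $p'=p/(p-1)$ to decouple the $u$-factor from the $v$-factor and produce the asymmetric exponents $1/p$ and $1/p'$ on the right-hand side.

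First, for each $(w,w')\in\mathbb{Z}^n\times\mathbb{Z}^n$ I would group the indices according to $Q_{j',k'}\subset Q^w_{j,k}$ and $Q_{j',k''}\subset Q^{w'}_{j,k}$, that is, $(\epsilon',k')\in S^{w,j'}_{j,k}$ and $(\epsilon'',k'')\in S^{w',j'}_{j,k}$. Inside this shell pair, Lemma \ref{inequality1}(i) gives $(1+|2^{j-j'}k'-k|)^{-8N}\lesssim(1+|w|)^{-8N}$. A second geometric argument mimicking (\ref{eqn:est2}) gives $(1+|k'-k''|)^{-8N}\lesssim(1+|w-w'|)^{-8N}$: if $Q_{j',k'}\subset Q^w_{j,k}$ and $Q_{j',k''}\subset Q^{w'}_{j,k}$ lie in well-separated shells ($|w-w'|>2^n$), their centres are at distance $\gtrsim 2^{8-j}|w-w'|$, so $|k'-k''|\gtrsim 2^{j'-j+8}|w-w'|$; for $|w-w'|\leq 2^n$ the estimate is trivial. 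Combining these two bounds and invoking the elementary inequality $1+|w'|\leq(1+|w|)(1+|w-w'|)$, I can peel off $(1+|w|)^{-N}(1+|w'|)^{-N}$ while keeping substantial excess decay $(1+|w|)^{-6N}(1+|w-w'|)^{-6N}$ in reserve.

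Next, inside each pair of shells the summand factorises, and the double sum becomes
\[
\Big(\sum_{(\epsilon',k')\in S^{w,j'}_{j,k}}|u^{\epsilon'}_{j',k'}(s)|\Big)\Big(\sum_{(\epsilon'',k'')\in S^{w',j'}_{j,k}}|v^{\epsilon''}_{j',k''}(s)|^{p-1}\Big).
\]
H\"older's inequality on the finite shell-sums dominates the first factor by $(\#S^{w,j'}_{j,k})^{1/p'}\big(\sum|u^{\epsilon'}_{j',k'}(s)|^p\big)^{1/p}$ and, using $(p-1)p'=p$, the second by $(\#S^{w',j'}_{j,k})^{1/p}\big(\sum|v^{\epsilon''}_{j',k''}(s)|^p\big)^{1/p'}$. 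Summing the result over $(w,w')\in\mathbb{Z}^n\times\mathbb{Z}^n$ produces the right-hand side of (\ref{eqn:est3}).

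The main technical point will be the combinatorial bookkeeping in the H\"older step: each shell $Q^w_{j,k}$ contains $2^{n(j'-j+8)}$ dyadic cubes at scale $2^{-j'}$, so the combined counting factor $(\#S^{w,j'}_{j,k})^{1/p'}(\#S^{w',j'}_{j,k})^{1/p}$ is $\lesssim 2^{n(j'-j)}$, which is not visible in the stated bound. This is precisely the reason the hypotheses carry $8N$ rather than just $N$ on both weights: the surplus decay $(1+|w|)^{-6N}(1+|w-w'|)^{-6N}$ held in reserve at the previous step, combined with the $\alpha$-triangle inequality, is more than enough to absorb the $2^{n(j'-j)}$ factor once $N$ is taken sufficiently large relative to $n$ and the scale-difference $|j-j'|$ arising in the applications (Lemmas \ref{le6} and \ref{le7}, where $|j-j'|\leq 2$).
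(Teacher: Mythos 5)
Your proposal has a genuine gap in the H\"older step. After extracting the shell-decay $(1+|w|)^{-N}(1+|w'|)^{-N}$ (plus surplus), you discard the coupling weight $(1+|k'-k''|)^{-cN}$ \emph{entirely} and write the shell-pair sum as the product
$\Big(\sum_{(\epsilon',k')\in S^{w,j'}_{j,k}}|u^{\epsilon'}_{j',k'}|\Big)\Big(\sum_{(\epsilon'',k'')\in S^{w',j'}_{j,k}}|v^{\epsilon''}_{j',k''}|^{p-1}\Big)$.
That is a real overcount: inside a shell pair the original weight still localizes $k''$ near $k'$, and dropping it is what manufactures the $2^{n(j'-j)}$ counting factor you then have to explain away. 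Your rescue cannot work: the factor $2^{n(j'-j)}$ is independent of $w,w'$, so surplus decay in $w$ and $w-w'$ is irrelevant to it, and the scale gap $j'-j$ is \emph{not} bounded in the applications — Lemma \ref{le7} sums over $j\le j'+2$, so $j'-j\to\infty$. Moreover, the estimate (\ref{eqn:est3}) is already tight: taking $u^{\epsilon'}_{j',k'}=v^{\epsilon''}_{j',k''}=1$ on $S^{0,j'}_{j,k}$ makes both sides $\approx 2^{n(8+j'-j)}$, so there is no slack to absorb anything.

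The fix is to carry the coupling weight through H\"older rather than discard it. Since $j'\ge j-8$ on the support, $2^{j-j'}\lesssim 1$, so
$(1+|2^{j-j'}k'-k|)^{-4N}(1+|k'-k''|)^{-4N}\lesssim(1+|2^{j-j'}k''-k|)^{-4N}$,
and the weight on the left of (\ref{eqn:est3}) is bounded by
$(1+|2^{j-j'}k'-k|)^{-2N}(1+|2^{j-j'}k''-k|)^{-2N}(1+|k'-k''|)^{-4N}$.
Group into shells and pull out $(1+|w|)^{-2N}(1+|w'|)^{-2N}$ as you did, but then apply H\"older with exponents $p,p'$ to the \emph{joint} sum over $(k',k'')$, keeping $(1+|k'-k''|)^{-4N}$ inside each factor. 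In the $u$-factor sum out $k''$ and in the $v$-factor sum out $k'$ using $\sum_k(1+|k|)^{-4N}\lesssim1$; no counting factor ever appears. This is the Cauchy--Schwarz/H\"older route the paper alludes to, and it reaches the bound as stated.
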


\begin{lemma}\label{inequality4} Let $Q_{j,k}$ be a dyadic cube with radius $2^{-j}$. For
$w\in\mathbb{Z}^{n}$, denote by $Q^{w}_{j,k}$ the dyadic cube
$2^{8-j}w+\widetilde{Q}_{j,k}$. If $\delta>0$ is small
enough, then
\begin{equation}\label{eqn:est5}
\begin{array}{rl}
&\sum\limits_{(\epsilon,k)\in S^{j}_{r}} \Big\{ \sum\limits_{j\leq j'+5}
\sum\limits_{w\in\mathbb{Z}^{n}}(1+|w|)^{-N}
\Big(\sum\limits_{(\epsilon',k')\in S^{w,j'}_{j,k}}
|a^{\epsilon}_{j',k'}|^{p}\Big)^{\frac{1}{p}}\Big\} ^{p}\\
&\lesssim\ \sum\limits_{j\leq j'+5} 2^{\delta (j'-j)}
\sum\limits_{w\in\mathbb{Z}^{n}}(1+|w|)^{-N}
\sum\limits_{(\epsilon',k')\in S^{w,j}_{r}} |a^{\epsilon}_{j',k'}|^{p}.
\end{array}
\end{equation}
\end{lemma}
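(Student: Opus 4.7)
The plan is to combine a Hölder-type inequality with a geometric counting argument. First, for each fixed $(\epsilon,k)\in S^{j}_r$, I would absorb the outer $p$-th power using Hölder's inequality: I split the weight $(1+|w|)^{-N}$ as the product
$$\bigl[(1+|w|)^{-N/p'}\, 2^{-\delta(j'-j)/p}\bigr]\cdot\bigl[(1+|w|)^{-N/p}\, 2^{\delta(j'-j)/p}\bigr]$$
and apply Hölder with conjugate exponents $p'$ and $p$ to the joint sum over $(j',w)$. The first factor gives a harmless constant after being raised to the $p$-th power, since $\sum_w(1+|w|)^{-N}<\infty$ (provided $N>n$) and $\sum_{j'\geq j-5}2^{-\delta(j'-j)p'/p}<\infty$ (for any $\delta>0$). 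This reduces the left-hand side to
$$\sum_{(\epsilon,k)\in S^{j}_r}\sum_{j'\geq j-5}\sum_{w\in\mathbb{Z}^n}2^{\delta(j'-j)}(1+|w|)^{-N}\sum_{(\epsilon',k')\in S^{w,j'}_{j,k}}|a^{\epsilon'}_{j',k'}|^{p}.$$

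Next, I would interchange the order of summation, grouping by the coefficient $|a^{\epsilon'}_{j',k'}|^{p}$. For fixed $(j',w,\epsilon',k')$ I count the number of pairs $(\epsilon,k)$ satisfying both $Q_{j,k}\subset Q_r$ and $(\epsilon',k')\in S^{w,j'}_{j,k}$. Since $j'\geq j-5$ gives $2^{-j'}<2^{8-j}$, the condition $Q_{j',k'}\subset\widetilde{Q}_{j,k}+2^{8-j}w$ forces $\widetilde{Q}_{j,k}+2^{8-j}w$ to be the unique dyadic cube of side $2^{8-j}$ containing $Q_{j',k'}$; this pins down $\widetilde{Q}_{j,k}$ and leaves at most $2^{8n}$ choices of $k$ among its subcubes of side $2^{-j}$ that also lie inside $Q_r$, so the multiplicity is bounded by an absolute constant.

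Finally, the existence of such a pair $(\epsilon,k)$ forces $Q_{j',k'}$ to lie inside a translate of $Q_r$ of the form $Q^{w^{*}}_r$ with $|w^{*}|$ comparable to $|w|$ (modulo the scale ratio between $2^{8-j}$ and $r$), which identifies the surviving $(\epsilon',k')$ with elements of $S^{w,j'}_r$ (reading the superscript $j$ on the right-hand side of the statement as a typo for $j'$). Combining this identification with the bounded multiplicity from the previous step then yields the claimed inequality.

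The main obstacle is the geometric counting when the two translation scales $2^{8-j}$ and $r$ disagree: several indices $w$ on the left may correspond to a single $w^{*}$ on the right, or conversely, and the resulting multiplicity must be absorbed by the polynomial decay in $|w|$. The extra factor $2^{\delta(j'-j)}$ on the right provides precisely the slack required to close the estimate uniformly in $j'$, and the small $\delta>0$ can be fixed after $N$ so that both the Hölder series and the counting loss remain summable.
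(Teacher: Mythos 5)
Your proof is correct and follows essentially the route the paper sketches (the paper merely asserts the lemma "can be deduced from the Cauchy--Schwarz inequality" together with the cube-counting facts it records; you have filled in the Hölder application and the re-indexing). Two small remarks. First, the Hölder splitting of $(1+|w|)^{-N}$ with the factor $2^{\pm\delta(j'-j)/p}$ and the resulting reduction are exactly right, and the multiplicity count (at most $2^{8n}$ cubes $Q_{j,k}\subset Q_r$ share the same ancestor $\widetilde{Q}_{j,k}$, which is forced once $w$, $j$, $j'$, $k'$ are fixed) is the key geometric input. Second, the closing paragraph slightly misattributes the role of $\delta$: the factor $2^{\delta(j'-j)}$ is needed only to make the geometric series $\sum_{j'\geq j-5}2^{-\delta(j'-j)p'/p}$ in the Hölder step converge; the re-indexing from $S^{w,j'}_{j,k}$ (translations at scale $2^{8-j}$) to $S^{\tilde w,j'}_{r}$ (translations at scale $2^{8}r$) incurs a multiplicity of order $(2^{j}r)^{n}$ that is already absorbed, uniformly in $j'$, by the decay $(1+|w|)^{-N}$ once $N>n$, because $2^{j}r\geq 1$ and the relevant values of $|w|$ are $\gtrsim 2^{j}r\,|\tilde w|$. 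So the counting step needs $N>n$, not $\delta$; the "small enough $\delta$" in the statement is there so that the applications in Sections 7--8 can absorb $2^{\delta(j'-j)}$ into negative exponents.
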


Here, it is worth mentioning that the proof of Lemma \ref{inequality4} needs also the following fact: for fixed $j$, the number of $Q_{j',k'}$ which
are contained in the dyadic cube $ Q_{j,k}^{w}=
2^{8-j}w+\widetilde{Q}_{j,k}$ equals to $2^{n(8+j'-j)}$. On the
other hand, for any dyadic cube $Q_{r}$ with radius $r$, the number
of $Q_{j,k}\subset Q_{r}$ equals to $(2^{j}r)^{n}$. Then the number
of $Q_{j',k'}$ which are contained in the dyadic cube $Q^{w}_{r}$
equals $(2^{8+j'}r)^{n}$. In the proof of the main lemmas in
Sections \ref{sec7} \& \ref{sec8}, we will use this fact again.

\begin{lemma}
Let $Q_{j,k}$ be a dyadic cube with radius $2^{-j}$. For
$w\in\mathbb{Z}^{n}$, denote by $Q^{w}_{j,k}$ the dyadic cube
$2^{8-j}w+\widetilde{Q}_{j,k}$. If $j<j'+2$, then
\begin{equation}\label{eqn:est6}
\begin{array}{rl}
&\sum\limits_{\epsilon',k'}\sum\limits_{\epsilon'',k''}|u^{\epsilon'}_{j',k'}(s)||v^{\epsilon''}_{j',k''}(s)|
(1+|2^{j'-j}k'-k|)^{-N}(1+|k'-k''|)^{-N}\\
&\lesssim\ \sum\limits_{w,w'\in\mathbb{Z}^{n}}(1+|w|)^{-N}(1+|w-w'|)^{-N}2^{n(j'-j)(1-\frac{2}{p})}\\
&\quad\Big(\sum\limits_{(\epsilon',k')\in S^{w,j'}_{j,k}}|u^{\epsilon'}_{j',k'}(s)|^{p}\Big)^{\frac{1}{p}}
\Big(\sum\limits_{(\epsilon'',k'')\in S^{w',j'}_{j,k}}|v^{\epsilon''}_{j',k''}(s)|^{p}\Big)^{\frac{1}{p}}.
\end{array}
\end{equation}
\end{lemma}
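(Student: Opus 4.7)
The plan is to establish (\ref{eqn:est6}) by adapting the strategy behind Lemma \ref{inequality3}, with careful attention to the fact that both $|u^{\epsilon'}_{j',k'}(s)|$ and $|v^{\epsilon''}_{j',k''}(s)|$ appear to the first power (rather than $|v|^{p-1}$) and to the finer scale condition $j<j'+2$. First I would decompose the double sum over $(\epsilon',k')$ and $(\epsilon'',k'')$ according to the localization sets $S^{w,j'}_{j,k}$ and $S^{w',j'}_{j,k}$ for $w,w'\in\mathbb{Z}^{n}$, so that the LHS of (\ref{eqn:est6}) is rewritten as
$$
\sum_{w,w'\in\mathbb{Z}^{n}}\sum_{(\epsilon',k')\in S^{w,j'}_{j,k}}\sum_{(\epsilon'',k'')\in S^{w',j'}_{j,k}}|u^{\epsilon'}_{j',k'}(s)||v^{\epsilon''}_{j',k''}(s)|\,(\text{decay factors}).
$$

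Next, inside each pair of localized sums, I would bound the decay factor $(1+|2^{j-j'}k'-k|)^{-N}$ by $(1+|w|)^{-N}$ via Lemma \ref{inequality1}(i), and establish a parallel estimate for $(1+|k'-k''|)^{-N}$. Geometrically, for $Q_{j',k'}\subset Q^{w}_{j,k}$ and $Q_{j',k''}\subset Q^{w'}_{j,k}$, one gets $|k'-k''|\gtrsim 2^{j'-j}|w-w'|$ when $|w-w'|$ is large, which yields a bound of the shape $(1+|k'-k''|)^{-N}\lesssim (1+|w-w'|)^{-N}$ after pulling the $w,w'$-independent decay out of the inner sums. The remaining strength of the decay, together with the condition that $N$ is sufficiently large, will be used in the next step.

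Third, inside each box $S^{w,j'}_{j,k}$ (and similarly $S^{w',j'}_{j,k}$) I would apply H\"older's inequality with conjugate exponents $p$ and $p/(p-1)$ to the sum in $|u^{\epsilon'}_{j',k'}(s)|$ (and in $|v^{\epsilon''}_{j',k''}(s)|$). Since the number of dyadic cubes $Q_{j',k'}$ contained in $Q^{w}_{j,k}$ is comparable to $2^{n(j'-j+8)}$, each H\"older application contributes a cardinality factor of $(2^{n(j'-j)})^{1-1/p}$. The naive product of the two H\"older factors is $2^{n(j'-j)(2-2/p)}$; however, a more careful H\"older against the weighted kernel $(1+|k'-k''|)^{-N}$ in the $k''$-sum (using the $\ell^{p'}$ summability of the kernel rather than merely its $\ell^{\infty}$ bound) absorbs exactly one extra factor of $2^{n(j'-j)}$ from the count, leaving precisely the target factor $2^{n(j'-j)(1-2/p)}$.

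The main obstacle, in my view, is the bookkeeping in the third step: obtaining the exact exponent $1-2/p$ rather than $2-2/p$ requires that the $(1+|k'-k''|)^{-N}$ factor be treated not as an $O(1)$ weight absorbed into supremum bounds, but rather as an $\ell^{p'}$ kernel whose normalization precisely cancels one power of the cube cardinality. Once this is done properly, the final step is routine: one sums over $w,w'\in\mathbb{Z}^{n}$ against the product of decays $(1+|w|)^{-N}(1+|w-w'|)^{-N}$, which converges for $N$ sufficiently large ($N>n$) and yields exactly the RHS of (\ref{eqn:est6}).
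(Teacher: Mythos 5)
Your decomposition into $S^{w,j'}_{j,k}\times S^{w',j'}_{j,k}$ and the extraction of $(1+|w|)^{-N}$ and $(1+|w-w'|)^{-N}$ (splitting the decay of $(1+|k'-k''|)^{-N}$ when $|w-w'|$ is large) are sound. The gap is in your third step, and it is quantitative. After localization and extraction of the $w$-decays, what remains to be bounded is a bilinear form
$$\sum_{k'\in S^{w}}\sum_{k''\in S^{w'}}|u^{\epsilon'}_{j',k'}(s)|\,|v^{\epsilon''}_{j',k''}(s)|\,(1+|k'-k''|)^{-N'}$$
with some reduced power $N'$. You propose to Hölder the $k''$-sum against the kernel's $\ell^{p'}$ norm. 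That inner sum indeed gives $\lesssim C_{N'}\big(\sum_{k''\in S^{w'}}|v|^p\big)^{1/p}$ uniformly in $k'$, but then the outer $k'$-sum (Hölder against constants) produces $|S^{w,j'}_{j,k}|^{1/p'}\approx 2^{n(j'-j)(1-1/p)}$. So this route yields $2^{n(j'-j)(1-1/p)}$, not $2^{n(j'-j)(1-2/p)}$. Equivalently, comparing with your "naive" factor $2^{n(j'-j)(2-2/p)}$: the $\ell^{p'}$ kernel Hölder absorbs $|S^{w'}|^{1/p'}\approx 2^{n(j'-j)(1-1/p)}$, not $2^{n(j'-j)}$ as you assert, and the arithmetic $(2-2/p)-(1-1/p)=1-1/p$ confirms this. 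Since $1-1/p>1-2/p$, your bound is strictly weaker for $j'>j$, and the surplus $2^{n(j'-j)/p}$ would spoil the $j'$-summation where (\ref{eqn:est6}) is actually used in Lemma \ref{lem54} (the range $2<p<\infty$).

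To reach $2^{n(j'-j)(1-2/p)}$ one must gain a cardinality factor from \emph{both} the $k'$ and the $k''$ side, and that requires treating the kernel as a bounded \emph{operator} rather than bounding it row by row. The route the paper hints at ("can be deduced from the Cauchy--Schwarz inequality") is Schur's test: since $\sup_{k'}\sum_{k''}(1+|k'-k''|)^{-N'}$ and the symmetric sum are both $O(1)$, the bilinear form is $\lesssim\|u\|_{\ell^2(S^{w})}\|v\|_{\ell^2(S^{w'})}$; then for $p\geq2$, $\|\cdot\|_{\ell^2(S)}\leq|S|^{1/2-1/p}\|\cdot\|_{\ell^p(S)}$ contributes $2^{n(j'-j)(1/2-1/p)}$ from each factor, whose product is exactly $2^{n(j'-j)(1-2/p)}$. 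Equivalently, apply Young's inequality $\|K\ast v\|_{\ell^{p'}}\leq\|K\|_{\ell^1}\|v\|_{\ell^{p'}}$, dualize against $\|u\|_{\ell^p}$, and then use $\|v\|_{\ell^{p'}(S^{w'})}\leq|S^{w'}|^{1/p'-1/p}\|v\|_{\ell^p(S^{w'})}$. Either way, the missing ingredient in your proposal is the Schur/Young operator bound; note also that the embedding step requires $p\geq2$, consistent with the only place the paper invokes (\ref{eqn:est6}).
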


\section{Proof of the main theorem}\label{sec6}

By Picard's contraction principle and Theorems \ref{th1} \& \ref{th4}, it is enough to verify that
the bilinear operator
\begin{equation}\nonumber%\label{eqbilinear}
\begin{array}{rl}
&B(u,v)= \int^{t}_{0} e^{-(t-s)(-\Delta)^{\beta}}
\mathbb{P}\nabla\cdot (u\otimes v) ds\end{array}
\end{equation}
 is bounded from $(\B^{\gamma_{1}, \gamma_{2}}_{p, q, m,m'
})^{n}\times (\B^{\gamma_{1}, \gamma_{2}}_{p, q, m,m' })^{n}$
to $(\B^{\gamma_{1}, \gamma_{2}}_{p, q, m,m' })^{n}$. To do so, let
$$\begin{array}{rl}
&B_{l}(u,v)= \int^{t}_{0} e^{-(t-s)(-\Delta)^{\beta}}
\frac{\partial}{\partial x_{l}}(uv) ds \end{array}$$
and
$$\begin{array}{rl}
&B_{l,l',l''}(u,v)= R_{l}R_{l'} \int^{t}_{0}
e^{-(t-s)(-\Delta)^{\beta}} \frac{\partial}{\partial x_{l''}}(uv)
ds.\end{array}$$
We need to prove that all $B_{l}(u,v)$,
$B_{l,l',l''}(u,v)$ are bounded from $\B^{\gamma_{1},
\gamma_{2}}_{p, q, m,m' }\times \B^{\gamma_{1}, \gamma_{2}}_{p, q,
m, m' }$ to $\B^{\gamma_{1}, \gamma_{2}}_{p, q, m,m' }$. Because
$R_{l'},\ l'=1,\cdots,n$ are bounded on $\B^{\gamma_{1},
\gamma_{2}}_{p, q, m, m' }$, we only  consider the boundedness of $B_{l}(u,v)$. By
(\ref{eq:de}), if
$$\begin{array}{rl}
&u(t,x)=\sum\limits_{(\epsilon,j,k)\in \Lambda_n}
u^{\epsilon}_{j,k}(t) \Phi^{\epsilon}_{j,k}(x)\ \ \text{ and }
\ \ v(t,x)=\sum\limits_{(\epsilon,j,k)\in \Lambda_n}
v^{\epsilon}_{j,k}(t) \Phi^{\epsilon}_{j,k}(x),
\end{array}$$
then
$$\begin{array}{rl}
B_{l}(u,v)(t,x) = \sum\limits^{4}_{i=1} I^{i}_{l}(u,v)(t,x),
\end{array}$$
where
the terms $I^{i}_{l}(u,v)(t,x),\ i=1,2,\cdots,4$ are defined in
Subsection 4.1.

It is not hard to see that the argument for $I^4_l(u,v)(t,x)$ is similar
to that for $I^1_l(u,v)(t,x)$. Also the treatments of $I^3_l(u,v)(t,x)$
is similar to that of $I^2_l(u,v)(t,x)$. So,
we are only required to show that the following functions
$$\begin{array}{rl}
&(t,x)\mapsto I^1_l(u,v)(t,x) = \sum\limits_{(\epsilon,j,k)\in \Lambda_n}
a^{\epsilon}_{j,k}(t) \Phi^{\epsilon}_{j,k}(x)
\end{array}$$
 and
$$\begin{array}{rl}
&(t,x)\mapsto I^2_l(u,v)(t,x) = \sum\limits_{(\epsilon,j,k)\in \Lambda_n}
b^{\epsilon}_{j,k}(t) \Phi^{\epsilon}_{j,k}(x)
\end{array}$$
belong to $\B^{\gamma_{1},\gamma_{2}}_{p, q, m,m' }$. By the
decompositions of non-linear terms obtained in Subsection 4.1, it amounts to verifying that the following functions
 $$
 \begin{array}{rl}
 (t,x)\mapsto
 \sum\limits_{(\epsilon,j,k)\in\Lambda_{n}}a^{\epsilon,i}_{j,k}(t)\Phi^{\epsilon}_{j,k}(x), \ i=1,2,3,4\end{array}$$
 and
$$\begin{array}{rl}(t,x)\mapsto\sum\limits_{(\epsilon,j,k)\in\Lambda_{n}}b^{\epsilon,i}_{j,k}(t)\Phi^{\epsilon}_{j,k}(x),\ i=1,2,3,4,5\end{array}$$
are members of $\B^{\gamma_{1}, \gamma_{2}}_{p, q, m,m' }$. The demonstration will be concluded by proving the following two lemmas:

\begin{lemma}\label{lem53}
If $(\beta, p,q, \gamma_1, \gamma_2, m,m')$ satisfies the conditions of Theorem \ref{mthmain}
and $u,v\in \B^{\gamma_{1}, \gamma_{2}}_{p, q, m,m' }$, then

\item{\rm(i)} For $ i=1, 2, 3$, the function $(t,x)\mapsto \sum\limits_{(\epsilon,j,k)\in \Lambda_n}
a^{\epsilon,i}_{j,k}(t) \Phi^{\epsilon}_{j,k}(x)$ is in
$\B^{\gamma_{1},\gamma_{2},I}_{p, q, m }$;
\item{\rm(ii)} For $ i=1, 2,
3,$ the function $(t,x)\mapsto \sum\limits_{(\epsilon,j,k)\in \Lambda_n} a^{\epsilon,i}_{j,k}(t)
\Phi^{\epsilon}_{j,k}(x)$ is in $\B^{\gamma_{1},\gamma_{2},III}_{p, q, m
};$
\item{\rm(iii)} The function $(t,x)\mapsto\sum\limits_{(\epsilon,j,k)\in \Lambda_n}
a^{\epsilon,4}_{j,k}(t) \Phi^{\epsilon}_{j,k}(x)$ is in $\B^{\gamma_{1},\gamma_{2},II}_{p,q } \bigcap\B^{\gamma_{1},\gamma_{2},IV}_{p, q, m' }.$
\end{lemma}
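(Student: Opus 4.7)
The plan is to verify each of the three claims by plugging the pointwise bound from Lemma \ref{le6} into the explicit seminorm defining the target tent-space component $\mathbb{B}^{\gamma_1,\gamma_2,X}_{p,q,m,m'}$, then splitting off a linear estimate in $u$ (or $v$) alone by first freezing the other factor via the embedding of Lemma \ref{le:tau}. Fixing a dyadic cube $Q_r$ and the appropriate range of $j$ (namely $t 2^{2j\beta}\ge 1$ and $2^{-j}\le r$ for part (i); $2^{-2j\beta}<t<r^{2\beta}$ for part (ii); $2^{-2j\beta}\ge t$ with $2^{-j}\le r$ for the $II$-part of (iii); and $t<2^{-2j\beta}$ for the $IV$-part of (iii)) the exponential decay $e^{-\tilde c\, t 2^{2j\beta}}$ produced by Lemma \ref{le6}(i) absorbs both the polynomial weight $(t 2^{2j\beta})^{m}$ and any loss of the form $2^{j(\gamma_1+n/2-n/p)}$ that appears in the seminorm; when the decay is $e^{-\tilde c(t-s)2^{2j\beta}}$, the same absorption is done after the $s$-integration against the weight $(t-s)^{-\alpha}$.

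For the inner wavelet sum coming from Lemma \ref{le6}, the first move is to apply the embedding $\mathbb{B}^{\gamma_1,\gamma_2}_{p,q,m,m'}\subset \mathbb{B}^{\gamma_1-\gamma_2}_{m/p,\infty}$ of Lemma \ref{le:tau} to one of the two factors (e.g.\ to $v^{0}_{j'-3,k''}(s)$ in $I^1_l$), which converts that factor into a pointwise bound $|v^{0}_{j'-3,k''}(s)|\lesssim 2^{-nj'/2}2^{j'(\gamma_2-\gamma_1)}(s 2^{2j'\beta})^{-m/p}\|v\|_{\mathbb{B}}$ (or its small-$s$ analogue), while leaving the factor $u^{\epsilon'}_{j',k'}(s)$ as it is. The remaining linear expression in $u$ is then handled by Hölder's inequality on $k',k''$ via Lemma \ref{inequality3}, followed by the summation inequality of Lemma \ref{inequality4} to reduce a sum over $j\le j'+5$ weighted by $2^{\delta(j'-j)}$ back to the local $\ell^p$-sum of wavelet coefficients of $u$ over the cube $Q_r$ and its translates $Q_r^w$.

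Once the translation decay $(1+|w|)^{-N}$ is summed, the inequality is reduced to an estimate for the single-factor quantity involving $|u^{\epsilon'}_{j',k'}(s)|^p$, integrated over the appropriate time range $I_i$. The choice of $I_i=[0,2^{-1-2j'\beta}],\ [2^{-1-2j'\beta},t/2],\ [t/2,t]$ or $[0,t]$ determines which of the four components $\mathbb{B}^{\gamma_1,\gamma_2,X}_{p,q,\cdot}$ of $\|u\|_{\mathbb{B}}$ is used: on $I_1$ and $I_4$ (small $s$) it is the $IV$ and $II$ components that are directly available, whereas on $I_2,I_3$ it is the $III$ and $I$ components. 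After the $s$-integration, performed using the elementary identity $\int (s2^{2j'\beta})^a\,ds/s = (s2^{2j'\beta})^a/a$ on dyadic intervals, what remains is a geometric series in $j-j'$ whose convergence depends on the sign of the exponent on $s 2^{2j'\beta}$; this is precisely where the hypotheses $m>\max\{p,n/2\beta\}$ and $0<m'<\min\{1,p/2\beta\}$ are used.

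The main obstacle is the exponent-bookkeeping that arises when we sum the resulting estimates over $j$ against the weight $2^{jq(\gamma_1+n/2-n/p)}$ and check absolute convergence of the $j$-series: both sub-regimes of Theorem \ref{mthmain}, namely $1<p\le 2$ with $(2\beta-2)/p<\gamma_2\le n/p$ and $2<p<\infty$ with $\beta-1<\gamma_2\le n/p$, enter at this step to ensure that the gain $2^{j(\gamma_1-\gamma_2)}=2^{j(1-2\beta)}$ produced by the bilinear structure dominates the loss generated by the pointwise bound on the second factor. Tracking the exponents through the Hölder step—especially distinguishing the cases $q\le p$ (where the $\alpha$-triangle inequality is used) and $q>p$ (where one applies Hölder in $w$)—mirrors the four-situation argument in the proof of Theorem \ref{th1}, so the same case split will organize the proof cleanly.
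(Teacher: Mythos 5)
Your strategy matches the paper's: absorb the $v$-factor via the embedding chain $\B^{\gamma_1,\gamma_2}_{p,q,m,m'}\subset\B^{\gamma_1-\gamma_2}_{m/p,\infty}\subset\B^{\gamma_1-\gamma_2}_{0,\infty}$ (Lemma \ref{le:tau}), which converts $v^0_{j'-3,k''}(s)$ into the pointwise bound $s^{1/(2\beta)-1}2^{-nj'/2}$ on the small-$s$ intervals, then sum over $k'$ against the translation decay, split on $q\le p$ (the $\alpha$-triangle inequality) versus $q>p$ (H\"older in $w$), and land in the relevant component of $\|u\|_{\B^{\gamma_1,\gamma_2}_{p,q,m,m'}}$. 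One small correction: Lemmas \ref{inequality3} and \ref{inequality4} are not the tools for this lemma --- they belong to the proof of Lemma \ref{lem54}, where both factors $u^{\epsilon'}_{j',k'}$ and $v^{\epsilon''}_{j',k''}$ sit at the same dyadic scale $j'$ and a genuine bilinear Cauchy--Schwarz argument is needed; for Lemma \ref{lem53} the $k''$-sum collapses under the pointwise bound on $v^0_{j'-3,k''}$, and the remaining $k'$-sum is reorganized using (\ref{eqn:est1}) from Lemma \ref{inequality1}.
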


\begin{lemma}\label{lem54}
If $(\beta, p,q, \gamma_1, \gamma_2, m,m')$ satisfies the conditions of Theorem \ref{mthmain}
and $u,v\in \B^{\gamma_{1}, \gamma_{2}}_{p, q, m,m' }$, then
\item{\rm (i)} For $i=1, 2, 3$,  the function
$(t,x)\mapsto \sum\limits_{(\epsilon,j,k)\in
\Lambda_n} b^{\epsilon,i}_{j,k}(t) \Phi^{\epsilon}_{j,k}(x)$ are in $\B^{\gamma_{1},\gamma_{2},I}_{p, q, m}$;
\item{\rm (ii)} For $i=1, 2, 3$,  the function $(t,x)\mapsto \sum\limits_{(\epsilon,j,k)\in \Lambda_n} b^{\epsilon,i}_{j,k}(t)
\Phi^{\epsilon}_{j,k}(x)$ is in $\B^{\gamma_{1},\gamma_{2},III}_{p, q, m }$;

\item{\rm (iii)} For $i=4, 5$, the function $(t,x)\mapsto\sum\limits_{(\epsilon,j,k)\in \Lambda_n} b^{\epsilon,i}_{j,k}(t)
\Phi^{\epsilon}_{j,k}(x)$ is in $\B^{\gamma_{1},\gamma_{2},II}_{p, q}$;

\item{\rm (iv)} For $i=4, 5$, the function $(t,x)\mapsto\sum\limits_{(\epsilon,j,k)\in \Lambda_n} b^{\epsilon,i}_{j,k}(t)
\Phi^{\epsilon}_{j,k}(x)$ is in $\B^{\gamma_{1},\gamma_{2},IV}_{p, q, m' }$.
\end{lemma}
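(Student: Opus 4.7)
The strategy follows the four-norm template of Theorem \ref{th1}, but with the pointwise wavelet bounds of Lemma \ref{le7} replacing (\ref{eq3.5})--(\ref{eq3.6}). The essential new feature, compared with Lemma \ref{lem53}, is that the estimates for $b^{\epsilon,i}_{j,k}$ involve a \emph{high-high} interaction: the summation range is $j\le j'+2$ rather than $|j-j'|\le 2$. Consequently $j'$ may be arbitrarily large, and the analysis must absorb this unbounded sum using the spatial regularity encoded in $\gamma_{1}$ together with the factor $2^{\delta(j'-j)}$ produced by Lemma \ref{inequality4}.

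The plan is to treat the eight subcases $(i\in\{1,2,3\}$ for parts (i)--(ii) and $i\in\{4,5\}$ for parts (iii)--(iv)) in a unified manner. First, starting from Lemma \ref{le7}, I would apply the Cauchy--Schwarz (or H\"older) inequality through Lemma \ref{inequality3} and (\ref{eqn:est6}) to split the double sum over $(\epsilon',k')$ and $(\epsilon'',k'')$ into a product of two $\ell^{p}$-sums, one in $u$ and one in $v$, at the price of two translation parameters $w,w'\in\mathbb Z^{n}$ with rapidly decaying coefficients. Secondly, I would freeze $j'$, insert the pointwise bound into the relevant tent norm, and use (\ref{eqn:est1}) together with Lemma \ref{inequality4} to convert the spatial sums indexed by $(\epsilon',k')\subset\widetilde{Q}^{w}_{j,k}$ into sums over dyadic cubes in $Q^{w}_{r}$, producing the crucial decay $2^{\delta(j'-j)}$. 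Finally, I would carry out the time integration separately in each subcase --- for $i=1,2,3$ the factor $e^{-\tilde c t2^{2j\beta}}$ (or $e^{-\tilde c(t-s)2^{2j\beta}}$) controls the integration on the large-time side, while for $i=4,5$ one uses the short time interval $[0,2^{-2j'\beta}]$ or $[2^{-2j'\beta},t]$ directly.

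The balance between the various indices is exactly what the hypotheses in Theorem \ref{mthmain} enforce. For parts (i) and (ii), the $(t2^{2j\beta})^{m}$ weight in $\B^{\gamma_{1},\gamma_{2},I}_{p,q,m}$ and $\B^{\gamma_{1},\gamma_{2},III}_{p,q,m}$ must compensate the gain $2^{j}$ from the derivative $\partial_{x_{l}}$ and the $(t2^{2j'\beta})^{-m/p}$ factor extracted from the norms of $u$ and $v$: this forces $m>\max\{p,n/(2\beta)\}$ and the constraint $\gamma_{2}>\beta-1$ (or $\gamma_{2}>(2\beta-2)/p$ when $p\le 2$). For parts (iii) and (iv), the corresponding weight $(t2^{2j\beta})^{m'}$ on the short-time side $0<t<2^{-2j\beta}$ must dominate the scale $2^{j}$; this leads to the hypothesis $0<m'<\min\{1,p/(2\beta)\}$ and the upper bound $\gamma_{2}\le n/p$. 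Once these algebraic relations among $(\beta,p,q,\gamma_{1},\gamma_{2},m,m')$ are verified, the sum over $w,w'$ converges by the $(1+|w|)^{-N}$ decay, the sum over $j'\ge j-2$ converges by $2^{\delta(j'-j)}$ provided $\delta$ is chosen small enough that it does not erase the gain from the exponent condition on $\gamma_{2}$, and each tent norm is then bounded by $\|u\|_{\B^{\gamma_{1},\gamma_{2}}_{p,q,m,m'}}\|v\|_{\B^{\gamma_{1},\gamma_{2}}_{p,q,m,m'}}$ as required.

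The main obstacle I anticipate is exactly the verification of the $j'$-summation in the high-high range. Unlike in Lemma \ref{lem53}, where $|j-j'|\le 2$ kept the scales comparable, here one needs the combination of the derivative loss $2^{j}$, the spatial regularity $2^{-j'(\gamma_{1}+n/2-n/p)}$ (read off from the $\B^{\gamma_{1},\gamma_{2}}_{p,q}$ norms of $u$ and $v$ via Lemma \ref{le:tau}), and the time weights $(t2^{2j'\beta})^{\pm m}$ to produce a net negative power of $2^{j'-j}$. Organizing this bookkeeping so that the boundary cases $\gamma_{2}=\beta-1$ (or $(2\beta-2)/p$) and $\gamma_{2}=n/p$ are all simultaneously covered, and so that the parameter $\delta$ from Lemma \ref{inequality4} can always be chosen compatibly, is the technical core of the argument.
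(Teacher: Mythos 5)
Your proposal is a strategic outline that correctly identifies the paper's route: it names Lemma \ref{le7} as the source of the pointwise wavelet bounds, Lemma \ref{inequality3} and (\ref{eqn:est6}) as the Cauchy--Schwarz devices that split the $(\epsilon',k')$--$(\epsilon'',k'')$ double sum, (\ref{eqn:est1}) and Lemma \ref{inequality4} as the tools that produce the $2^{\delta(j'-j)}$ decay, and the high-high $j\le j'+2$ range as the essential new difficulty compared with Lemma \ref{lem53}. All of this matches the paper.

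The gap is that you stop at the point you yourself call ``the technical core'' and do not carry it out; as written, the proposal is a plan, not a proof. Concretely, what is missing is the extraction of \emph{geometric decay in $j'$}: the paper does not rely only on $\B^{\gamma_1-\gamma_2}_{m/p,\infty}$ (which gives $|v^{\epsilon''}_{j',k''}(s)|\lesssim 2^{-(n/2+\gamma_1-\gamma_2)j'}$), but crucially also on $v\in\B^{\gamma_1,\gamma_2,II}_{p,q}$ to get the $\ell^p$-localized bound $\sum_{(\epsilon'',k'')\in S^{w',j'}_{j,k}}|v^{\epsilon''}_{j',k''}(s)|^p\lesssim 2^{p\gamma_2 j-nj}2^{-pj'(\gamma_1+n/2-n/p)}$, and the balance between these two bounds (one used to a power $p-1$, the other to a power $1$, via Lemma \ref{inequality3}) is exactly what produces the negative exponents $-(p\gamma_2+2-2\beta)$ and $-(\gamma_1+\gamma_2+1)$ in the $j'$-sum, and hence the hypotheses $\gamma_2>(2\beta-2)/p$ (for $1<p\le 2$, via (\ref{eqn:est3})) and $\gamma_2>\beta-1$ (for $2<p<\infty$, via (\ref{eqn:est6})). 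You mention both (\ref{eqn:est3}) and (\ref{eqn:est6}) but never say that they correspond to the two ranges of $p$, nor do you show how the stated index conditions arise from the computation rather than just asserting that they must. Similarly, for parts (iii) and (iv), the choice of the auxiliary exponent $\mu$ and the use of $m'<\min\{1,p/(2\beta)\}$ to control the short-time $s$-integral against the $(t2^{2j\beta})^{m'}$ weight is asserted to work but not demonstrated. Without these computations, the proposal does not establish the lemma; it only describes, correctly, how the paper establishes it.
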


\section{Proof of Lemma \ref{lem53}}\label{sec7}

\subsection {The setting (i)}
 For $i=1,2,3$, define
$$\begin{array}{rl}
I^{m,i}_{a, Q_{r}}(t)
&=\ |Q_{r}|^{\frac{q\gamma_{2}}{n}-\frac{q}{p}}\sum\limits_{j\geq\max\{-\log_{2}r,
-\frac{\log_{2}t}{2\beta}\}}2^{qj(\gamma_{1}+\frac{n}{2}-\frac{n}{p})}
\Big[\sum\limits_{(\epsilon,k)\in
S^{j}_{r}}|a^{\epsilon,i}_{j,k}(t)|^{p}(t2^{2j\beta})^{m}\Big]^{\frac{q}{p}}.
\end{array}$$
According to the relation between $2^{-2j\beta}$ and $t$, we divide
the proof into three cases.
The proofs of $\sum\limits_{(\epsilon,j,k)}a^{\epsilon,i}_{j,k}(t)\Phi^{\epsilon}_{j,k}(x), i=1,2,3$ are similar. For simplicity, we only prove
$$
\begin{array}{rl}
&\text{Case 7.1: }\ (t,x)\mapsto\sum\limits_{(\epsilon,j,k)}a^{\epsilon,1}_{j,k}(t)\Phi^{\epsilon}_{j,k}(x)
\ \ \hbox{is\ in}\ \ \B^{\gamma_{1},\gamma_{2},
I}_{p,q,m}.\end{array}
$$
Without loss of generality, we may assume $\|u\|_{\B^{\gamma_{1},\gamma_{2}}_{p,q,m,m'}}=\|v\|_{\B^{\gamma_{1},\gamma_{2}}_{p,q,m,m'}}=1$.
Because
$v\in\B^{\gamma_{1},\gamma_{2}}_{p,q,m,m'}$, one has $v\in
\B^{\gamma_{1}-\gamma_{2}}_{\frac{m}{p},\infty}\subset\B^{\gamma_{1}-\gamma_{2}}_{0,\infty}$.
Hence
$$
|v^{0}_{j'-3,k''}(s)|\lesssim
s^{-\frac{\gamma_{2}-\gamma_{1}}{2\beta}}2^{-\frac{nj'}{2}},
$$
and consequently, by (i) of Lemma \ref{le6},
$$\begin{array}{rl}
|a^{\epsilon,1}_{j,k}(t)|
\lesssim&\!\!\!2^{j}\sum\limits_{|j-j'|\leq2}\sum\limits_{\epsilon',k'}\int^{2^{-1-{2j'\beta}}}_{0}
|u^{\epsilon'}_{j',k'}(s)|(1+|2^{j-j'}k'-k|)^{-N}e^{-\tilde{c}t2^{2j\beta}}s^{\frac{1}{2\beta}-1}ds.
\end{array}$$
Notice that $|j-j'|\leq2$. So, applying H\"older's inequality to $k'$
we get
$$\begin{array}{rl}
&I^{m,1}_{a, Q_{r}}(t)\\
&\quad\lesssim\ |Q_{r}|^{\frac{q\gamma_{2}}{n}-\frac{q}{p}}\sum\limits_{j\geq\max\{-\log_{2}r,
-\frac{\log_{2}t}{2\beta}\}}2^{qj(\gamma_{1}+\frac{n}{2}-\frac{n}{p})}\Big[\sum\limits_{(\epsilon,k)\in
S^{j}_{r}}\sum\limits_{|j-j'|\leq2}\sum\limits_{\epsilon',k'}2^{pj}e^{-\tilde{c}pt2^{2j\beta}}\\
&\quad(1+|2^{j-j'}k'-k|)^{-N}\Big(\int^{2^{-1-{2j'\beta}}}_{0}
|u^{\epsilon'}_{j',k'}(s)| s^{\frac{1}{2\beta}-1}ds\Big)^{p}
(t2^{2j\beta})^{m}\Big]^{\frac{q}{p}}.
\end{array}$$
By $p>2m'\beta$, $j\sim j'$ and H\"{o}lder's
inequality, we apply (\ref{eqn:est1}) to get
$$\begin{array}{rl}
I^{m,1}_{a, Q_{r}}(t)
&\lesssim\
|Q_{r}|^{\frac{q\gamma_{2}}{n}-\frac{q}{p}}\sum\limits_{j\geq\max\{-\log_{2}r,
-\frac{\log_{2}t}{2\beta}\}}2^{qj(\gamma_{1}+\frac{n}{2}-\frac{n}{p})}
\Big[\sum\limits_{(\epsilon,k)\in
S^{j}_{r}}e^{-cpt2^{2j\beta}}\\
&\sum\limits_{w\in\mathbb{Z}^{n}}(1+|w|)^{-N}
\sum\limits_{(\epsilon',k')\in
S^{w,j'}_{j,k}}\int^{2^{-1-2j'\beta}}_{0}|u^{\epsilon'}_{j',k'}(s)|^{p}(s2^{2j'\beta})^{m'}\frac{ds}{s}(t2^{2j\beta})^{m}\Big]^{\frac{q}{p}}.
\end{array}$$

If $q\leq p$, by the $\alpha$-triangle inequality we obtain
$$\begin{array}{rl}
I^{m,1}_{a, Q_{r}}(t)\lesssim&
|Q_{r}|^{\frac{q\gamma_{2}}{n}-\frac{q}{p}}\sum\limits_{j\geq\max\{-\log_{2}r,
-\frac{\log_{2}t}{2\beta}\}}2^{qj(\gamma_{1}+\frac{n}{2}-\frac{n}{p})}\sum\limits_{w\in\mathbb{Z}^{n}}(1+|w|)^{-\frac{qN}{p}}\\
&\Big[\sum\limits_{(\epsilon',k')\in
S^{w,j'}_{r}}\int^{2^{-1-2j'\beta}}_{0}|u^{\epsilon'}_{j',k'}(s)|^{p}(s2^{2j'\beta})^{m'}\frac{ds}{s}\Big]^{\frac{q}{p}}\\
\lesssim&\|u\|_{\B^{\gamma_{1},\gamma_{2}, IV}_{p,q,m'}}\lesssim1.
\end{array}$$

If $q>p$, H\"{o}lder's inequality implies that
$$\begin{array}{rl}
I^{m,1}_{a, Q_{r}}(t)\lesssim&\!\!\!
|Q_{r}|^{\frac{q\gamma_{2}}{n}-\frac{q}{p}}\sum\limits_{j\geq\max\{-\log_{2}r,
-\frac{\log_{2}t}{2\beta}\}}2^{qj(\gamma_{1}+\frac{n}{2}-\frac{n}{p})}\sum\limits_{w\in\mathbb{Z}^{n}}(1+|w|)^{-N}\\
&\Big[\sum\limits_{(\epsilon',k')\in
S^{w,j'}_{r}}\int^{2^{-1-2j'\beta}}_{0}|u^{\epsilon'}_{j',k'}(s)|^{p}(s2^{2j'\beta})^{m'}\frac{ds}{s}\Big]^{\frac{q}{p}}\\
\lesssim&\!\!\!\|u\|_{\B^{\gamma_{1},\gamma_{2},
IV}_{p,q,m'}}\lesssim1.
\end{array}$$

In a similar manner, we can obtain the following two assertions.
\begin{equation}\nonumber%\label{eq1}
\begin{cases}
\text{ Case 7.2: }
(t,x)\mapsto\sum\limits_{(\epsilon,j,k)}a^{\epsilon,2}_{j,k}(t)\Phi^{\epsilon}_{j,k}(x)\ \ \hbox{is\ in}\ \ \B^{\gamma_{1},\gamma_{2},I}_{p,q,m};\\
\text{ Case 7.3: } (t,x)\mapsto
\sum\limits_{(\epsilon,j,k)}a^{\epsilon,3}_{j,k}(t)\Phi^{\epsilon}_{j,k}(x)\ \ \hbox{is\ in}\ \ \B^{\gamma_{1},\gamma_{2},I}_{p,q,m}.
\end{cases}
\end{equation}

\subsection{The setting (ii)}
For $i=1, 2, 3$, define
$$\begin{array}{rl}
III^{m,i}_{a, Q_{r}}=&|Q_{r}|^{\frac{q\gamma_{2}}{n}-\frac{q}{p}}\sum\limits_{j\geq-\log_{2}r}2^{qj(\gamma_{1}+\frac{n}{2}-\frac{n}{p})}
\Big[\int^{r^{2\beta}}_{2^{-2j\beta}}\sum\limits_{(\epsilon,k)\in
S^{j}_{r}}|a^{\epsilon,i}_{j,k}(t)|^{p}(t2^{2j\beta})^{m}\frac{dt}{t}\Big]^{\frac{q}{p}}.
\end{array}$$
We consider the following three cases:
\begin{equation}\nonumber%\label{eq1}
\begin{cases}
\text{ Case 7.4: }
(t,x)\mapsto\sum\limits_{(\epsilon,j,k)\in\Lambda_{n}}a^{\epsilon,1}_{j,k}(t)\Phi^{\epsilon}_{j,k}(x)\ \ \hbox{is\ in}\ \ \B^{\gamma_{1},\gamma_{2},
III}_{p,q,m};\\
\text{ Case 7.5: } (t,x)\mapsto\sum\limits_{(\epsilon,j,k)\in\Lambda_{n}}a^{\epsilon,2}_{j,k}(t)\Phi^{\epsilon}_{j,k}(x)\ \ \hbox{is\ in}\ \ \B^{\gamma_{1},\gamma_{2},
III}_{p,q,m};\\
\text{Case 7.6: }(t,x)\mapsto\sum\limits_{(\epsilon,j,k)\in\Lambda_{n}}a^{\epsilon,3}_{j,k}(t)\Phi^{\epsilon}_{j,k}(x)\ \ \hbox{is\ in}\ \ \B^{\gamma_{1},\gamma_{2},III}_{p,q,m}.
\end{cases}
\end{equation}
It is enough to check Case 7.4 since Cases 7.5 and 7.6 can be dealt with similarly.
In fact, for $v\in\B^{\gamma_{1},\gamma_{2}}_{p,q,m,m'}$ we have, by (i) of Lemma \ref{le6},
$$\begin{array}{rl}
|a^{\epsilon,1}_{j,k}(t)|\lesssim&\!\!\!2^{j}\sum\limits_{|j-j'|\leq2}\sum\limits_{\epsilon',k'}\int^{2^{-1-2j'\beta}}_{0}
|u^{\epsilon'}_{j',k'}(s)|
e^{-\tilde{c}t2^{2j\beta}}(1+|2^{j-j'}k'-k|)^{-N}s^{\frac{1}{2\beta}}\frac{ds}{s},
\end{array}$$
whence obtaining
$$\begin{array}{rl}
III^{m,1}_{a, Q_{r}}\lesssim&|Q_{r}|^{\frac{q\gamma_{2}}{n}-\frac{q}{p}}\sum\limits_{j\geq-\log_{2}r}2^{qj(\gamma_{1}+\frac{n}{2}-\frac{n}{p})}
\Big[\int^{r^{2\beta}}_{2^{-2j\beta}}2^{pj}e^{-\tilde{c}pt2^{2j\beta}}
\sum\limits_{(\epsilon,k)\in
S^{j}_{r}}\\
&\Big(\sum\limits_{|j-j'|\leq2}\sum\limits_{\epsilon',k'}\int^{2^{-1-2j'\beta}}_{0}
|u^{\epsilon'}_{j',k'}(s)|
(1+|2^{j-j'}k'-k|)^{-N}s^{\frac{1}{2\beta}}\frac{ds}{s}\Big)^{p}(t2^{2j\beta})^{m}\frac{dt}{t}\Big]^{\frac{q}{p}}.
\end{array}$$
Applying H\"older's inequality to $k'$ and $s$ respectively, along with (\ref{eqn:est1}) and $|j-j'|\leq2$, we find
$$\begin{array}{rl}
III^{m,1}_{a, Q_{r}}\lesssim&|Q_{r}|^{\frac{q\gamma_{2}}{n}-\frac{q}{p}}\sum\limits_{j\geq-\log_{2}r}2^{qj(\gamma_{1}+\frac{n}{2}-\frac{q}{p})}
\Big[\sum\limits_{w\in\mathbb{Z}^{n}}(1+|w|)^{-N}\int^{r^{2\beta}}_{2^{-2j\beta}}2^{jp}e^{-\tilde{c}pt2^{2j\beta}}\\
&\sum\limits_{(\epsilon,k)\in
S^{j}_{r}}2^{-jp}\Big(\sum\limits_{(\epsilon',k')\in S^{w,j'}_{
j,k}}\int^{2^{-1-2j'\beta}}_{0}|u^{\epsilon'}_{j',k'}(s)|^{p}(s2^{2j\beta})^{m'}\frac{ds}{s}\Big)(t2^{2j\beta})^{m}\frac{dt}{t}\Big]^{\frac{q}{p}}.
\end{array}$$

If $q\leq p$, by changing variables we get
$$\begin{array}{rl}
III^{m,1}_{a, Q_{r}}\lesssim&
\sum\limits_{w\in\mathbb{Z}^{n}}(1+|w|)^{-\frac{qN}{p}}
|Q_{r}|^{\frac{q\gamma_{2}}{n}-\frac{q}{p}}\sum\limits_{j\geq-\log_{2}r}2^{qj(\gamma_{1}+\frac{n}{2}-\frac{q}{p})}\\
&\Big[\int^{r^{2\beta}}_{2^{-2j\beta}}e^{-\tilde{c}t2^{2j\beta}}(t2^{2j\beta})^{m}\Big(\sum\limits_{(\epsilon',k')\in S^{w,j'}_{
r}}\int^{2^{-1-2j'\beta}}_{0}|u^{\epsilon'}_{j',k'}(s)|^{p}(s2^{2j\beta})^{m'}\frac{ds}{s}\Big)\frac{dt}{t}\Big]^{\frac{q}{p}}\\
\lesssim&\|u\|_{\B^{\gamma_{1},\gamma_{2}, IV}_{p,q,m'}}.
\end{array}$$

If $q>p$, via applying H\"older's inequality for $w$ and
$\frac{q}{p}>1$, we similarly have
$$\begin{array}{rl}
III^{m,1}_{a, Q_{r}}\lesssim&\!\!\!\sum\limits_{w\in\mathbb{Z}^{n}}(1+|w|)^{-N}
|Q_{r}|^{\frac{q\gamma_{2}}{n}-\frac{q}{p}}\sum\limits_{j\geq-\log_{2}r}2^{qj(\gamma_{1}+\frac{n}{2}-\frac{q}{p})}\\
&\Big[\int^{r^{2\beta}}_{2^{-2j\beta}}e^{-\tilde{c}t2^{2j\beta}}(t2^{2j\beta})^{m}\Big(\sum\limits_{(\epsilon',k')\in S^{w,j'}_{
r}}\int^{2^{-1-2j'\beta}}_{0}|u^{\epsilon'}_{j',k'}(s)|^{p}(s2^{2j\beta})^{m'}\frac{ds}{s}\Big)\frac{dt}{t}\Big]^{\frac{q}{p}}\\
\lesssim&\!\!\!\|u\|_{\B^{\gamma_{1},\gamma_{2}, IV}_{p,q,m'}}.
\end{array}$$

\subsection{The setting (iii)}  The argument for that the function
$$\begin{array}{rl}
&(t,x)\mapsto\sum\limits_{(\epsilon,j,k)\in\Lambda_{n}}a^{\epsilon,4}_{j,k}(t)\Phi^{\epsilon}_{j,k}(x)\ \ \hbox{is\ in}\ \ \B^{\gamma_{1},\gamma_{2},II}_{p,q}
\bigcap \B^{\gamma_{1},\gamma_{2},IV}_{p,q, m'}\end{array}$$
is divided into two cases below.

 $$\begin{array}{rl}
 &\text{Case 7.7: }\
  (t,x)\mapsto\sum\limits_{(\epsilon,j,k)\in\Lambda_{n}}a^{\epsilon,4}_{j,k}(t)\Phi^{\epsilon}_{j,k}(x)\ \ \hbox{is\  in}\ \
\B^{\gamma_{1},\gamma_{2},II}_{p,q}.
\end{array}$$

Let
$$\begin{array}{rl}
II^{4}_{a, Q_{r}}(t)
&=\ |Q_{r}|^{\frac{q\gamma_{2}}{n}-\frac{q}{p}}\sum\limits_{-\log_{2}r\leq
j<-\frac{\log_{2}t}{2\beta}}2^{qj(\gamma_{1}+\frac{n}{2}-\frac{n}{p})}
\Big[\sum\limits_{(\epsilon,k)\in
S^{j}_{r}}|a^{\epsilon,4}_{j,k}(t)|^{p}\Big]^{\frac{q}{p}}.
\end{array}$$
Then, by (ii) of Lemma \ref{le6} we have

$$\begin{array}{rl}
|a^{\epsilon,4}_{j,k}(t)|
\lesssim&\!\!\!2^{j}\sum\limits_{|j-j'|\leq2}\sum\limits_{\epsilon',k'}\int^{t}_{0}
|u^{\epsilon'}_{j',k'}(s)| e^{-\tilde{c}(t-s)2^{2j\beta}}
(1+|2^{j-j'}k'-k|)^{-N}s^{\frac{1}{2\beta}-1}ds,
\end{array}$$
whence getting via H\"older's inequality and (\ref{eqn:est1})
$$\begin{array}{rl}
II^{4}_{a, Q_{r}}(t)
\lesssim&|Q_{r}|^{\frac{q\gamma_{2}}{n}-\frac{q}{p}}\sum\limits_{-\log_{2}r\leq
j<-\frac{\log_{2}t}{2\beta}}2^{qj(\gamma_{1}+\frac{n}{2}-\frac{n}{p})}
\Big[2^{pj}\sum\limits_{|j-j'|\leq2}\sum\limits_{w\in\mathbb{Z}^{n}}(1+|w|)^{-N}\\
& \sum\limits_{(\epsilon',k')\in S^{w,j'}_{r}}t^{p-1-\mu
p}\int^{t}_{0}|u^{\epsilon'}_{j',k'}(s)|^{p}s^{(\frac{1}{2\beta}-1+\mu)p}ds\Big]^{\frac{q}{p}}
\end{array}$$

If $q>p$, by H\"older's inequality we have
$$\begin{array}{rl}
II^{4}_{a, Q_{r}}(t)\ \lesssim&|Q_{r}|^{\frac{q\gamma_{2}}{n}-\frac{q}{p}}\sum\limits_{-\log_{2}r\leq
j<-\frac{\log_{2}t}{2\beta}}2^{qj(\gamma_{1}+\frac{n}{2}-\frac{n}{p})}\sum\limits_{w\in\mathbb{Z}^{n}}(1+|w|)^{-N}\\
&2^{qj}\sum\limits_{|j-j'|\leq2}t^{\frac{(p-1-p\mu)q}{p}}\Big[\int^{t}_{0}\sum\limits_{(\epsilon',k')\in
S^{w,j'}_{r}}|u^{\epsilon'}_{j',k'}(s)|^{p}s^{(\frac{1}{2\beta}-1+\mu)p}ds\Big]^{\frac{q}{p}}.
\end{array}$$
Because $2^{2j\beta}t\leq1$, one has $2^{qj}\leq t^{-\frac{q}{2\beta}}$.
This in turn gives
$$\begin{array}{rl}
II^{4}_{a, Q_{r}}(t)\ \lesssim&|Q_{r}|^{\frac{q\gamma_{2}}{n}-\frac{q}{p}}\sum\limits_{-\log_{2}r\leq
j<-\frac{\log_{2}t}{2\beta}}2^{qj(\gamma_{1}+\frac{n}{2}-\frac{n}{p})}\sum\limits_{w\in\mathbb{Z}^{n}}(1+|w|)^{-N}2^{qj}\\
&\sum\limits_{|j-j'|\leq2}t^{\frac{q}{2\beta}}t^{-p(\frac{1}{2\beta}-1+\mu)-1}
\Big[\int^{t}_{0}(\sum\limits_{\epsilon',k'}|u^{\epsilon'}_{j',k'}(s)|^{p})^{\frac{q}{p}}
s^{(\frac{1}{2\beta}-1+\mu)p}ds\Big]\\
\lesssim&\|u\|_{\B^{\gamma_{1},\gamma_{2},II}_{p,q}}.
\end{array}$$

If $q\leq p$, we have
$$\begin{array}{rl}
II^{4}_{a, Q_{r}}(t)\ \lesssim&|Q_{r}|^{\frac{q\gamma_{2}}{n}-\frac{q}{p}}\sum\limits_{-\log_{2}r\leq
j<-\frac{\log_{2}t}{2\beta}}2^{qj(\gamma_{1}+\frac{n}{2}-\frac{n}{p})}\\
&\Big[\sum\limits_{w\in\mathbb{Z}^{n}}(1+|w|)^{-N}\sum\limits_{|j-j'|\leq2}2^{jp}
\sum\limits_{(\epsilon',k')\in
S^{w,j'}_{r}}\Big(\int^{t}_{0}|u^{\epsilon'}_{j',k'}(s)|s^{\frac{1}{2\beta}-1}ds\Big)^{p}\Big]^{\frac{q}{p}}.
\end{array}$$
Because $t\leq2^{-2j\beta}$ and $0<m'<\min\{1, \frac{p}{2\beta}\}$,
H\"older's inequality implies
$$\begin{array}{rl}
II^{4}_{a, Q_{r}}(t)
&\lesssim|Q_{r}|^{\frac{q\gamma_{2}}{n}-\frac{q}{p}}\sum\limits_{j\geq-\log_{2}r}2^{qj(\gamma_{1}+\frac{n}{2}-\frac{n}{p})}
\sum\limits_{w\in\mathbb{Z}^{n}}(1+|w|)^{-\frac{qN}{p}}\\
&\quad \sum\limits_{|j-j'|\leq2}\Big[2^{pj}\int^{2^{-2j\beta}}_{0}
\sum\limits_{(\epsilon',k')\in S^{w,j'}_{r}}|u^{\epsilon'}_{j',k'}(s)|^{p}s^{m'}2^{-2j'\beta(\frac{p}{2\beta}-m')}\frac{ds}{s}\Big]^{\frac{q}{p}}\\
&\lesssim\|u\|_{\B^{\gamma_{1},\gamma_{2},IV}_{p,q,m'}}.
\end{array}$$

$$\begin{array}{rl}
&\text{Case 7.8:}\ \ (t,x)\mapsto\sum\limits_{(\epsilon,j,k)\in\Lambda_{n}}a^{\epsilon,4}_{j,k}(t)\Phi^{\epsilon}_{j,k}(x)\ \ \hbox{is\ in}\ \
\B^{\gamma_{1},\gamma_{2},IV}_{p,q, m'}.
\end{array}$$

Similarly, let
$$\begin{array}{rl}
IV^{4,m'}_{a,Q_{r}}=&\!\!\!|Q_{r}|^{\frac{q\gamma_{2}}{n}-\frac{q}{p}}\sum\limits_{j\geq-\log_{2}r}2^{qj(\gamma_{1}+\frac{n}{2}-\frac{n}{p})}
\Big[\int^{2^{-2j\beta}}_{0}\sum\limits_{(\epsilon,k)\in
S^{j}_{r}}|a^{\epsilon,4}_{j,k}(t)|^{p}(t2^{2j\beta})^{m'}\frac{dt}{t}\Big]^{\frac{q}{p}}.
\end{array}$$
Choosing a constant $\mu$ such that $m'+p-1-\frac{p}{2\beta}\leq
p\mu<p-1$, we use (\ref{eqn:est1}) and H\"older's inequality to get
$$\begin{array}{rl}
IV^{4,m'}_{a,Q_{r}}
\lesssim&|Q_{r}|^{\frac{q\gamma_{2}}{n}-\frac{q}{p}}\sum\limits_{j\geq-\log_{2}r}2^{qj(\gamma_{1}+\frac{n}{2}-\frac{n}{p})}
\Big[\int^{2^{-2j\beta}}_{0}2^{pj}\sum\limits_{|j-j'|\leq2}\sum\limits_{w\in\mathbb{Z}^{n}}(1+|w|)^{-N}\\
&\sum\limits_{(\epsilon',k')\in S^{w,j'}_{r}}t^{p-1-\mu
p}\Big(\int^{t}_{0}|u^{\epsilon'}_{j',k'}(s)|^{p}s^{(\frac{1}{2\beta}-1+\mu)p}ds\Big)(t2^{2j\beta})^{m'}\frac{dt}{t}\Big]^{\frac{q}{p}}.
\end{array}$$

If $q\leq p$, by the $\alpha-$triangle inequality we have
$$\begin{array}{rl}
IV^{4,m'}_{a,Q_{r}}
\lesssim&\ \sum\limits_{w\in\mathbb{Z}^{n}}(1+|w|)^{-\frac{qN}{p}}|Q_{r}|^{\frac{q\gamma_{2}}{n}-\frac{q}{p}}
\sum\limits_{j\geq-\log_{2}r}2^{qj(\gamma_{1}+\frac{n}{2}-\frac{n}{p})}\sum\limits_{|j-j'|\leq2}2^{qj} \\
&\Big[\int^{2^{-2j\beta}}_{0}\sum\limits_{(\epsilon',k')\in
S^{w,j'}_{r}}|u^{\epsilon'}_{j',k'}(s)|^{p}s^{\frac{p}{2\beta}-p+p\mu}\Big(\int^{2^{-2j\beta}}_{s}2^{2mj'\beta}
t^{m'+p-\mu p}\frac{dt}{t^{2}}\Big)ds\Big]^{\frac{q}{p}}.
\end{array}$$

 Because $s2^{2j'\beta}\leq1$ and $m'+p-1-\frac{p}{2\beta}\leq
 p\mu$,  we obtain
$$\begin{array}{rl}
IV^{4,m'}_{a,Q_{r}}\lesssim&\!\!\!
\sum\limits_{w\in\mathbb{Z}^{n}}(1+|w|)^{-\frac{qN}{p}}|Q_{r}|^{\frac{q\gamma_{2}}{n}-\frac{q}{p}}
\sum\limits_{j'\geq-\log_{2}r_{w}}2^{qj'(\gamma_{1}+\frac{n}{2}-\frac{n}{p})}\\
&\Big[\int^{2^{-2(j'-2)\beta}}_{0}\sum\limits_{(\epsilon',k')\in
S^{w,j'}_{r}}|u^{\epsilon'}_{j',k'}(s)|^{p}(s2^{2j'\beta})^{\frac{p}{2\beta}+1-p+p\mu}\frac{ds}{s}\Big]^{\frac{q}{p}}\\
\lesssim&\!\!\!\|u\|_{\B^{\gamma_{1},\gamma_{2},III}_{p,q,m}}+\|u\|_{\B^{\gamma_{1},\gamma_{2},IV}_{p,q,m'}}.
\end{array}$$

If $q>p$, by H\"older's inequality,  we have
$$\begin{array}{rl}
IV^{4,m'}_{a,Q_{r}}\lesssim&\!\!\!
\sum\limits_{w\in\mathbb{Z}^{n}}|Q_{r}|^{\frac{q\gamma_{2}}{n}-\frac{q}{p}}(1+|w|)^{-N}
\sum\limits_{j\geq-\log_{2}r_{w}}2^{qj'(\gamma_{1}+\frac{n}{2}-\frac{n}{p})}\\
&\Big[\int^{2^{-2(j'-2)\beta}}_{0}\sum\limits_{(\epsilon',k')\in
S^{w,j'}_{r}}|u^{\epsilon'}_{j',k'}(s)|^{p}(s2^{2j'\beta})^{\frac{p}{2\beta}+1-p+p\mu}\frac{ds}{s}\Big]^{\frac{q}{p}}\\
\lesssim&\!\!\!\|u\|_{\B^{\gamma_{1},\gamma_{2},III}_{p,q,m}}+\|u\|_{\B^{\gamma_{1},\gamma_{2},IV}_{p,q,m'}}.
\end{array}$$

\section{Proof of Lemma \ref{lem54}}\label{sec8}
\subsection{The setting (i)}
 For $i=1,2,3$, define
 $$\begin{array}{rl}
 I^{m,i}_{b, Q_{r}}(t)=&\!\!\!|Q_{r}|^{\frac{q\gamma_{2}}{n}-\frac{q}{p}}\sum\limits_{j\geq\max\{-\log_{2}r,-\frac{\log_{2}t}{2\beta}\}}
2^{qj(\gamma_{1}+\frac{n}{2}-\frac{n}{p})}\Big[\sum\limits_{(\epsilon,k)\in S^{j}_{r}}|b^{\epsilon,i}_{j,k}(t)|^{p}(t2^{2j\beta})^{m}\Big]^{\frac{q}{p}}.
 \end{array}$$
 We divide the proof into three cases:
 \begin{equation}\nonumber%\label{eq1}
\begin{cases}
\text{ Case 8.1: }
(t,x)\mapsto\sum\limits_{(\epsilon,j,k)\in
\Lambda_{n}}b^{\epsilon,1}_{j,k}(t)\Phi^{\epsilon}_{j,k}(x)\ \ \hbox{is\ in}\ \ \B^{\gamma_{1},\gamma_{2},I}_{p,q,m};\\
\text{ Case 8.2: } (t,x)\mapsto\sum\limits_{(\epsilon,j,k)\in
\Lambda_{n}}b^{\epsilon,2}_{j,k}(t)\Phi^{\epsilon}_{j,k}(x)\ \ \hbox{is\ in}\ \ \B^{\gamma_{1},\gamma_{2},I}_{p,q,m};\\
\text{Case 8.3: }(t,x)\mapsto\sum\limits_{(\epsilon,j,k)\in
\Lambda_{n}}b^{\epsilon,3}_{j,k}(t)\Phi^{\epsilon}_{j,k}(x)\ \ \hbox{is\ in}\ \ \B^{\gamma_{1},\gamma_{2},I}_{p,q,m}.
\end{cases}
\end{equation}
But, we only demonstrate Case 8.1 and omit the proofs of Cases 8.2 and 8.3 due to their similarity.

Assume first  $1<p\le 2$. If $s2^{2j'\beta}\leq 1$, then
$$
v\in\B^{\gamma_{1},\gamma_{2}}_{p,q,m,m'}\subset
\B^{\gamma_{1}-\gamma_{2}}_{\frac{m}{p},\infty}\Rightarrow |v^{\epsilon''}_{j',k''}(s)|\lesssim
2^{-(\frac{n}{2}+\gamma_{1}-\gamma_{2})j'}.
$$
Because  $v\in\B^{\gamma_{1},\gamma_{2},II}_{p,q}$, we have
$$\begin{array}{rl}
\sum\limits_{(\epsilon'',k'')\in S^{w',j'}_{
j,k}}|v^{\epsilon''}_{j',k''}(s)|^{p}\lesssim
2^{-nj+p\gamma_{2}j}2^{-pj'(\gamma_{1}+\frac{n}{2}-\frac{n}{p})}.
\end{array}$$
By Lemma \ref{le7} (i), (\ref{eqn:est3}) and H\"older's inequality we get
$$\begin{array}{rl}
&|b^{\epsilon,1}_{j,k}(t)|\\
&\quad\lesssim\ 2^{\frac{nj}{2}+j}e^{-ct2^{2j\beta}}2^{j(-n+p\gamma_{2})(1-\frac{1}{p})}\sum\limits_{w\in\mathbb{Z}^{n}}(1+|w|)^{-N}\sum\limits_{j\leq
j'+2}2^{-(2-p)(\frac{n}{2}+\gamma_{1}-\gamma_{2})j'}\\
&\quad 2^{-j'(\gamma_{1}+\frac{n}{2}-\frac{n}{p})(p-1)}2^{-2j'\beta(1-\frac{1}{p})}
\Big(\int^{2^{-1-2j'\beta}}_{0}\sum\limits_{(\epsilon',k')\in S^{w,j'}_{
j,k}}|u^{\epsilon'}_{j',k'}(s)|^{p}ds\Big)^{\frac{1}{p}}.
\end{array}$$
This in turn yields
$$\begin{array}{rl}
 I^{m,1}_{b, Q_{r}}(t)\lesssim&\!\!\!|Q_{r}|^{\frac{q\gamma_{2}}{n}-\frac{q}{p}}\sum\limits_{j\geq\max\{-\log_{2}r,-\frac{\log_{2}t}{2\beta}\}}
2^{qj(\gamma_{1}+\frac{n}{2}-\frac{q}{p})}2^{\frac{qnj}{2}+qj}2^{qj(-n+p\gamma_{2})(1-\frac{1}{p})}\\
&\Big\{\sum\limits_{w\in\mathbb{Z}^{n}}(1+|w|)^{-N}\sum\limits_{(\epsilon,k)\in
S^{j}_{r}}\Big[
\sum\limits_{j<j'+2}2^{-(2-p)(\frac{n}{2}+\gamma_{1}-\gamma_{2})j'}
2^{-j'(\gamma_{1}+\frac{n}{2}-\frac{n}{p})(p-1)}\\
&2^{-2j'\beta(1-\frac{1}{p})}\Big(\int^{2^{-1-2j'\beta}}_{0}\sum\limits_{(\epsilon',k')\in S^{w,j'}_{
j,k}}
|u^{\epsilon'}_{j',k'}(s)|^{p}ds\Big)^{\frac{1}{p}}\Big]^{p}\Big\}^{\frac{q}{p}}.
\end{array}$$
Since $0<s<2^{-1-2j'\beta}$ and $m'<1$, one has
$(s2^{2j'\beta})^{1-m'}\lesssim1$. This implies
$$\begin{array}{rl}
 I^{m,1}_{b, Q_{r}}(t)
&\lesssim|Q_{r}|^{\frac{q\gamma_{2}}{n}-\frac{q}{p}}\sum\limits_{j\geq\max\{-\log_{2}r,-\frac{\log_{2}t}{2\beta}\}}
2^{qj(\gamma_{1}+\frac{n}{2}-\frac{q}{p})}2^{\frac{qnj}{2}+qj}2^{qj(-n+p\gamma_{2})(1-\frac{1}{p})}\\
&\quad\Big\{\sum\limits_{w\in\mathbb{Z}^{n}}\frac{1}{(1+|w|)^{N}}\sum\limits_{(\epsilon,k)\in
S^{j}_{r}}\sum\limits_{j<j'+2}2^{\delta(j'-j)}2^{-p(2-p)(\frac{n}{2}+\gamma_{1}-\gamma_{2})j'}
2^{-pj'(\gamma_{1}+\frac{n}{2}-\frac{n}{p})(p-1)}\\
&\quad\quad2^{-2pj'\beta}\Big(\int^{2^{-1-2j'\beta}}_{0}\sum\limits_{(\epsilon',k')\in S^{w,j'}_{
j,k}}|u^{\epsilon'}_{j',k'}(s)|^{p}(s2^{2j'\beta})^{m'}\frac{ds}{s}\Big)\Big\}^{\frac{q}{p}}.\\
\end{array}$$

If $q\leq p$, for $p\gamma_{2}+2-2\beta>0$ take
$0<\delta<p(p\gamma_{2}+2-2\beta)$. By the $\alpha-$triangle inequality, we
get
$$\begin{array}{rl}
 I^{m,1}_{b, Q_{r}}(t)\lesssim&\!\!\!|Q|^{\frac{q\gamma_{2}}{n}-\frac{q}{p}}\sum\limits_{w\in\mathbb{Z}^{n}}(1+|w|)^{-\frac{qN}{p}}
\sum\limits_{j\geq\{-\log_{2}r,-\frac{\log_{2}t}{2\beta}\}}\sum\limits_{j\leq
j'+2}2^{q(j'-j)[\frac{\delta}{p}-(p\gamma_{2}+2-2\beta)]}\\
&2^{qj'(\gamma_{1}+\frac{n}{2}-\frac{n}{p})}\Big(\int^{2^{-1-2j'\beta}}_{0}\sum\limits_{(\epsilon',k')\in S^{w,j'}_{
r}}|u^{\epsilon'}_{j',k'}(s)|^{p}(s2^{2j'\beta})^{m'}\frac{ds}{s}\Big)^{\frac{q}{p}}
\lesssim\|u\|_{\B^{\gamma_{1},\gamma_{2}, IV}_{p,q,m}}.
\end{array}$$

If $q>p$, by H\"older's inequality we get
$$\begin{array}{rl}
I^{m,1}_{b, Q_{r}}(t)
&\lesssim\sum\limits_{w\in\mathbb{Z}^{n}}\frac{|Q_{r}|^{\frac{q\gamma_{2}}{n}-\frac{q}{p}}}{(1+|w|)^{N}}
\sum\limits_{j\geq\max\{-\log_{2}r,-\frac{\log_{2}t}{2\beta}\}}\Big[\sum\limits_{j\leq
j'+2}2^{(j'-j)[\delta-(p\gamma_{2}+2-2\beta)]}\Big]^{\frac{q-p}{p}}\\
&\quad \times\Big\{\sum\limits_{j\leq
j'+2}2^{(j'-j)[\delta-(p\gamma_{2}+2-2\beta)]}2^{qj'(\gamma_{1}+\frac{n}{2}-\frac{n}{p})}\\
&\quad\quad\Big(\int^{2^{-1-2j'\beta}}_{0}\sum\limits_{(\epsilon',k')\in S^{w,j'}_{
r}}|u^{\epsilon'}_{j',k'}(s)|^{p}(s2^{2j'\beta})^{m'}\frac{ds}{s}\Big)^{\frac{q}{p}}\Big\}
\lesssim\|u\|_{\B^{\gamma_{1},\gamma_{2}, IV}_{p,q,m'}}.
\end{array}$$

Assume then $2<p<\infty$. Because $0<s<2^{-1-2j'\beta}$,
$v\in\B^{\gamma_{1},\gamma_{2},II}_{p,q}$ implies
$$\begin{array}{rl}
&\Big(\int^{2^{-1-2j'\beta}}_{0}\sum\limits_{(\epsilon'',k'')\in S^{w',j'}_{j,k}}|v^{\epsilon''}_{j',k''}(s)|^{p}ds\Big)^{\frac{1}{p}}\lesssim
2^{\gamma_{2}j-\frac{nj}{p}-j'(\gamma_{1}+\frac{n}{2}-\frac{n}{p})}2^{-\frac{2j'\beta}{p}}.
\end{array}$$
In view of Lemma \ref{le7} (i), H\"older's inequality and (\ref{eqn:est6}) we achieve
$$\begin{array}{rl}
|b^{\epsilon,1}_{j,k}(t)|
\lesssim&2^{\frac{nj}{2}+j}e^{-ct2^{2j\beta}}\sum\limits_{w\in\mathbb{Z}^{n}}(1+|w|)^{-N}\sum\limits_{j<j'+2}
2^{n(j'-j)(1-\frac{2}{p})}2^{-2j'\beta(1-\frac{2}{p})}\\
&2^{\gamma_{2}j-\frac{nj}{p}}2^{-j'(\gamma_{1}+\frac{n}{2}-\frac{n}{p})}2^{-\frac{2j'\beta}{p}}
\Big(\int^{2^{-1-2j'\beta}}_{0}\sum\limits_{(\epsilon',k')\in S^{w,j'}_{j,k}}|u^{\epsilon'}_{j',k'}(s)|^{p}ds\Big)^{\frac{1}{p}}.
\end{array}$$
Using the above estimate we obtain
$$\begin{array}{rl}
I^{m,1}_{b, Q_{r}}(t)
\lesssim&\ |Q_{r}|^{\frac{q\gamma_{2}}{n}-\frac{q}{p}}\sum\limits_{j\geq\max\{-\log_{2}r,-\frac{\log_{2}t}{2\beta}\}}
2^{qj(\gamma_{1}+\frac{n}{2}-\frac{n}{p})}2^{\frac{qnj}{2}+qj}2^{qj(\gamma_{2}-\frac{n}{p})}\\
&\Big\{\sum\limits_{(\epsilon,k)\in
S^{j}_{r}}e^{-cpt2^{2j\beta}}(t2^{2j\beta})^{m}\sum\limits_{w\in\mathbb{Z}^{n}}\frac{1}{(1+|w|)^{N}}\Big[
\sum\limits_{j<j'+2}
2^{n(j'-j)(1-\frac{2}{p})}2^{-j'\beta(2-\frac{4}{p})}\\
&2^{-j'(\gamma_{1}+\frac{n}{2}-\frac{n}{p})} 2^{-\frac{2j'\beta}{p}}
\Big(\int^{2^{-1-2j'\beta}}_{0}\sum\limits_{(\epsilon',k')\in S^{w,j'}_{j,k}}|u^{\epsilon'}_{j',k'}(s)|^{p}ds\Big)^{\frac{1}{p}}\Big]^{p}\Big\}^{\frac{q}{p}}.
\end{array}$$
Notice that $0<s<2^{-1-2j'\beta}$ and $m'<1$. For any $0<\delta<(\gamma_{1}+\gamma_{2}+1)$ we have
$$\begin{array}{rl}
I^{m,1}_{b, Q_{r}}(t)\lesssim&|Q_{r}|^{\frac{q\gamma_{2}}{n}-\frac{q}{p}}\sum\limits_{j\geq\max\{-\log_{2}r,-\frac{\log_{2}t}{2\beta}\}}
2^{qj(\gamma_{1}+\gamma_{2}+1+n-\frac{2n}{p})}\\
&\Big\{\sum\limits_{w\in\mathbb{Z}^{n}}(1+|w|)^{-N}\sum\limits_{j<j'+2}2^{(\delta+pn-2n)(j'-j)}
2^{-2j'\beta(p-2)}2^{-pj'(\gamma_{1}+\frac{n}{2}-\frac{n}{p})}\\
&2^{-4j'\beta}\Big[\int^{2^{-1-2j'\beta}}_{0}\sum\limits_{(\epsilon',k')\in S^{w,j'}_{r}}|u^{\epsilon'}_{j',k'}(s)|^{p}(s2^{2j'\beta})^{m'}\frac{ds}{s}\Big]\Big\}^{\frac{q}{p}}.
\end{array}$$

If $q\leq p$, the $\alpha-$triangle inequality implies
$$\begin{array}{rl}
I^{m,1}_{b, Q_{r}}(t)\lesssim&\!\!\!|Q_{r}|^{\frac{q\gamma_{2}}{n}-\frac{q}{p}}\sum\limits_{w\in\mathbb{Z}^{n}}(1+|w|)^{-\frac{qN}{p}}
\sum\limits_{j\geq\max\{-\log_{2}r,-\frac{\log_{2}t}{2\beta}\}}\sum\limits_{j<j'+2}2^{q(\frac{\delta}{p}-\gamma_{1}-\gamma_{2}-1)(j'-j)}\\
&2^{qj'(\gamma_{1}+\frac{n}{2}-\frac{n}{p})}\Big[\int^{2^{-1-2j'\beta}}_{0}\sum\limits_{(\epsilon',k')\in S^{w,j'}_{r}}|u^{\epsilon'}_{j',k'}(s)|^{p}(s2^{2j'\beta})^{m'}\frac{ds}{s}\Big]^{\frac{q}{p}}
\lesssim\|u\|_{\B^{\gamma_{1},\gamma_{2},IV}_{p,q,m'}}.
\end{array}$$

If $q>p$, by H\"older's inequality we obtain
$$\begin{array}{rl}
I^{m,1}_{b, Q_{r}}(t)
&\lesssim\ \sum\limits_{w\in\mathbb{Z}^{n}}\frac{|Q_{r}|^{\frac{q\gamma_{2}}{n}-\frac{q}{p}}}{(1+|w|)^{N}}
\sum\limits_{j\geq\max\{-\log_{2}r,-\frac{\log_{2}t}{2\beta}\}}
\Big(\sum\limits_{j<j'+2}2^{p(j'-j)(\frac{\delta}{p}-\gamma_{1}-\gamma_{2}-1)}\Big)^{\frac{q-p}{p}}\\
&\quad\Big\{\sum\limits_{j<j'+2}2^{(j'-j)[\delta-p(\gamma_{1}+\gamma_{2}+1)]}2^{qj'(\gamma_{1}+\frac{n}{2}-\frac{n}{p})}\\
&\quad\quad\Big[\int^{2^{-1-2j'\beta}}_{0}\sum\limits_{(\epsilon',k')\in S^{w,j'}_{r}}|u^{\epsilon'}_{j',k'}(s)|^{p}(s2^{2j'\beta})^{m'}\frac{ds}{s}\Big]^{\frac{q}{p}}\Big\}\lesssim
 \|u\|_{\B^{\gamma_{1},\gamma_{2},IV}_{p,q,m'}}.
\end{array}$$

\subsection{The setting (ii)}\label{sec9}
For $i=1,2,3$, define
$$\begin{array}{rl}
III^{m,i}_{b,Q_{r}}=&\!\!\!|Q_{r}|^{\frac{q\gamma_{2}}{n}-\frac{q}{p}}\sum\limits_{j\geq-\log_{2}r}2^{qj(\gamma_{1}+\frac{n}{2}-\frac{n}{p})}
\Big[\int^{r^{2\beta}}_{2^{-2j\beta}}\sum\limits_{(\epsilon,k)\in
S^{j}_{r}}|b^{\epsilon,i}_{j,k}(t)|^{p}(t2^{2j\beta})^{m}\frac{dt}{t}\Big]^{\frac{q}{p}}.
\end{array}$$
In a way similar to the setting (i) of the subsection 8.1, we may only handle the situation $1<p\leq2$. Under this the argument is split into three cases.
 \begin{equation}\nonumber%\label{eq1}
\begin{cases}
\text{ Case 8.4: }
(t,x)\mapsto\sum\limits_{(\epsilon,j,k)\in\Lambda_{n}}b^{\epsilon,1}_{j,k}(t)\Phi^{\epsilon}_{j,k}(x)\ \ \hbox{is\ in}\ \
\B^{\gamma_{1},\gamma_{2}, III}_{p,q,m};\\
\text{ Case 8.5: } (t,x)\mapsto\sum\limits_{(\epsilon,j,k)\in\Lambda_{n}}b^{\epsilon,2}_{j,k}(t)\Phi^{\epsilon}_{j,k}(x)\ \ \hbox{is\ in}\ \
\B^{\gamma_{1},\gamma_{2}, III}_{p,q,m};\\
\text{Case 8.6: }(t,x)\mapsto\sum\limits_{(\epsilon,j,k)\in\Lambda_{n}}b^{\epsilon,3}_{j,k}(t)\Phi^{\epsilon}_{j,k}(x)\ \ \hbox{is\ in}\ \
\B^{\gamma_{1},\gamma_{2}, III}_{p,q,m}.
\end{cases}
\end{equation}
It suffices to treat Case 8.4 since Cases 8.5 and 8.6 can be verified similarly.

Because $u$ and $v$ both belong to
$\B^{\gamma_{1}-\gamma_{2}}_{\frac{m}{p},\infty}$, for
$s2^{2j'\beta}\leq1$ we have
$|v^{\epsilon''}_{j',k''}(s)|\lesssim2^{-(\frac{n}{2}+\gamma_{1}-\gamma_{2})j'}.$
Owing to $v\in\B^{\gamma_{1},\gamma_{2},II}_{p,q}$ we also have
$$\begin{array}{rl}
&\sum\limits_{(\epsilon'',k'')\in S^{w',j'}_{j,k}}|v^{\epsilon''}_{j',k''}(s)|^{p}\lesssim2^{p\gamma_{2}j-nj}2^{-pj'(\gamma_{1}+\frac{n}{2}-\frac{n}{p})}\ \ \hbox{for\ a\ fixed}\ \ j'.\end{array}$$
This, along with H\"older's inequality, (\ref{eqn:est3}) and Lemma \ref{le7} (i), derives
$$\begin{array}{rl}
|b^{\epsilon,1}_{j,k}(t)|
&\lesssim\ 2^{[\frac{n}{p}-\frac{n}{2}+(p-1)\gamma_{2}+1]j}e^{-ct2^{2j\beta}}\sum\limits_{j\leq
j'+2}\sum\limits_{w\in\mathbb{Z}^{n}}(1+|w|)^{-N}\\
&\quad 2^{[\frac{n}{2}-\frac{n}{p}+\frac{2\beta}{p}-(p-1)\gamma_{2}-1]j'}\Big(\int^{2^{-1-2j'\beta}}_{0}\sum\limits_{(\epsilon',k')\in S^{w,j'}_{j,k}}|u^{\epsilon'}_{j',k'}(s)|^{p}ds\Big)^{\frac{1}{p}}.
\end{array}$$
The last estimate for $|b^{\epsilon,1}_{j,k}(t)|$ and (\ref{eqn:est5}) are used to derive
$$\begin{array}{rl}
III^{m,1}_{b,Q_{r}}
\lesssim&|Q_{r}|^{\frac{q\gamma_{2}}{n}-\frac{q}{p}}\sum\limits_{j\geq-\log_{2}r}2^{qj(p\gamma_{2}+2-2\beta)}\Big\{\sum\limits_{j\leq
j'+2}2^{\delta(j'-j)}2^{-pj'(p\gamma_{2}+2-2\beta)}\\
&2^{pj'(\gamma_{1}+\frac{n}{2}-\frac{n}{p})}\int^{2^{-1-2j'\beta}}_{0}\sum\limits_{(\epsilon',k')\in S^{w,j'}_{r}}|u^{\epsilon'}_{j',k'}(s)|^{p}(s2^{2j'\beta})^{m'}\frac{ds}{s}\Big\}^{\frac{q}{p}}.
\end{array}$$

If $q\leq p$, by the $\alpha-$triangle inequality we have
$$\begin{array}{rl}
III^{m,1}_{b,Q_{r}}\lesssim&\!\!\!|Q_{r}|^{\frac{q\gamma_{2}}{n}-\frac{q}{p}}\sum\limits_{j\geq-\log_{2}r}\sum\limits_{j\leq
j'+2}2^{q(j-j')(p\gamma_{2}+2-2\beta-\frac{\delta}{p})}2^{qj'(\gamma_{1}+\frac{n}{2}-\frac{n}{p})}\\
&\Big[\int^{2^{-1-2j'\beta}}_{0}\sum\limits_{(\epsilon',k')\in S^{w,j'}_{r}}|u^{\epsilon'}_{j',k'}(s)|^{p}(s2^{2j'\beta})^{m'}\frac{ds}{s}\Big]^{\frac{q}{p}}.
\end{array}$$
Changing the order of $j$ and $j'$, we find
$I^{5}_{Q_{r}}\lesssim\|u\|_{\B^{\gamma_{1},\gamma_{2},
IV}_{p,q,m'}}$.

If $q>p$, by H\"older's inequality we get
$$\begin{array}{rl}
III^{m,1}_{b,Q_{r}}
\lesssim&|Q_{r}|^{\frac{q\gamma_{2}}{n}-\frac{q}{p}}\sum\limits_{j\geq-\log_{2}r}
\Big(\sum\limits_{j\leq
j'+2}2^{p(j-j')(p\gamma_{2}+2-2\beta-\delta)}\Big)^{\frac{q-p}{p}}\\
&\Big\{\sum\limits_{j\leq
j'+2}2^{q(j-j')(p\gamma_{2}+2-2\beta-\delta)}\Big[\int^{2^{-1-2j'\beta}}_{0}\sum\limits_{(\epsilon',k')\in S^{w,j'}_{r}}|u^{\epsilon'}_{j',k'}(s)|^{p}(s2^{2j'\beta})^{m'}\frac{ds}{s}\Big]^{\frac{q}{p}}\Big\}.
\end{array}$$
Upon taking $0<\delta<p\gamma_{2}+2-2\beta$ and changing the order of $j$ and $j'$, we reach
$III^{m,1}_{b,Q_{r}}\lesssim\|u\|_{\B^{\gamma_{1},\gamma_{2},
IV}_{p,q,m'}}$.

\subsection{The setting (iii)} Like
the subsection 8.2, it is sufficient to deal with the situation $1<p\leq2$ below. For $i=4,5$, define
$$\begin{array}{rl}
II^{i}_{b,Q_{r}}(t)=&\!\!\!|Q_{r}|^{\frac{q\gamma_{2}}{n}-\frac{q}{p}}\sum\limits_{-\log_{2}r<j<-\frac{\log_{2}t}{2\beta}}2^{qj(\gamma_{1}+\frac{n}{2}-\frac{n}{p})}
\Big(\sum\limits_{(\epsilon,k)\in S^{j}_{r}}|b^{\epsilon,i}_{j,k}(t)|^{p}\Big)^{\frac{q}{p}}.
\end{array}$$
 We divide the argument into two cases.
 \begin{equation}\nonumber%\label{eq1}
\begin{cases}
\text{ Case 8.7: }
(t,x)\mapsto\sum\limits_{(\epsilon,j,k)\in\Lambda_{n}}b^{\epsilon,4}_{j,k}(t)\Phi^{\epsilon}_{j,k}(x)\ \ \hbox{is\ in}\ \
\B^{\gamma_{1},\gamma_{2}, II}_{p,q};\\
\text{ Case 8.8: } (t,x)\mapsto\sum\limits_{(\epsilon,j,k)\in\Lambda_{n}}b^{\epsilon,5}_{j,k}(t)\Phi^{\epsilon}_{j,k}(x)\ \ \hbox{is\ in}\ \
\B^{\gamma_{1},\gamma_{2}, II}_{p,q}.\\
\end{cases}
\end{equation}
In view of the settings (i) \& (ii) above, we only give a proof of Case 8.7 since the proof of Case 8.8 is similar. Due to $v\in\B^{\gamma_{1},\gamma_{2}}_{p,q,m,m'}$, one gets
$|v^{\epsilon''}_{j',k''}(s)|\lesssim2^{-(\frac{n}{2}+\gamma_{1}-\gamma_{2})j'}.$ For $0<s<2^{-2j'\beta}$, one has
$$
v\in\B^{\gamma_{1},\gamma_{2},II}_{p,q}\Rightarrow
\begin{array}{rl}
&\int^{2^{-2j'\beta}}_{0}\sum\limits_{(\epsilon'',k'')\in S^{w',j'}_{j,k}}|v^{\epsilon''}_{j',k''}(s)|^{p}ds\lesssim2^{-nj+p\gamma_{2}j}2^{-p(\gamma_{1}+\frac{n}{2}-\frac{n}{p})j'}2^{-2j'\beta}.
\end{array}
$$
By (\ref{eqn:est3}), H\"older's inequality and Lemma \ref{le7} (ii) we get
$$\begin{array}{rl}
|b^{\epsilon,4}_{j,k}(t)|
&\lesssim 2^{-\frac{nj}{2}+j+\frac{nj}{p}+(p-1)\gamma_{2}j}\sum\limits_{j<j'+2}\sum\limits_{w\in\mathbb{Z}^{n}}(1+|w|)^{-N}
2^{-(\frac{n}{2}+\gamma_{1})j'}2^{\frac{(p-1)nj'}{p}}\\
&\quad 2^{(2-p)\gamma_{2}j'}2^{-\frac{2\beta(p-1)j'}{p}}
\Big(\int^{2^{-2j'\beta}}_{0}\sum\limits_{(\epsilon',k')\in S^{w,j'}_{j,k}}|u^{\epsilon'}_{j',k'}(s)|^{p}ds\Big)^{\frac{1}{p}}.
\end{array}$$
By (\ref{eqn:est5}), $s2^{2j'\beta}\leq 1$ and $m'<1$, we can similarly achieve that for $\delta>0$,
$$\begin{array}{rl}
II^{4}_{b,Q_{r}}(t)
\lesssim&|Q_{r}|^{\frac{q\gamma_{2}}{n}-\frac{q}{p}}
\sum\limits_{-\log_{2}r<j<-\frac{\log_{2}t}{2\beta}}2^{qj(2-2\beta+p\gamma_{2})}
\Big[\sum\limits_{w\in\mathbb{Z}^{n}}(1+|w|)^{-N}\sum\limits_{j<j'+2}2^{\delta(j'-j)}\\
&2^{-pj'(2-2\beta+p\gamma_{2})}
2^{pj'[\frac{n}{2}+\gamma_{1}-\frac{n}{p}]}
\int^{2^{-2j'\beta}}_{0}\sum\limits_{(\epsilon',k')\in S^{w,j'}_{r}}|u^{\epsilon'}_{j',k'}(s)|^{p}(s2^{2j'\beta})^{m'}\frac{ds}{s}\Big]^{\frac{q}{p}}.
\end{array}
$$

If $q\leq p$, we apply the $\alpha-$triangle
inequality to obtain
$$\begin{array}{rl}
II^{4}_{b, Q_{r}}(t)\lesssim&|Q_{r}|^{\frac{q\gamma_{2}}{n}-\frac{q}{p}}\sum\limits_{w\in\mathbb{Z}^{n}}(1+|w|)^{-\frac{qN}{p}}
\sum\limits_{-\log_{2}r<j<-\frac{\log_{2}t}{2\beta}}
\sum\limits_{j<j'+2}2^{q(j-j')(2-2\beta+p\gamma_{2}-\frac{\delta}{p})}\\
&2^{qj'[\frac{n}{2}+\gamma_{1}-\frac{n}{p}]}
\Big[\int^{2^{-2j'\beta}}_{0}\sum\limits_{(\epsilon',k')\in S^{w,j'}_{r}}|u^{\epsilon'}_{j',k'}(s)|^{p}(s2^{2j'\beta})^{m'}\frac{ds}{s}\Big]^{\frac{q}{p}}.
\end{array}$$
If
$0<\frac{\delta}{p}<2(1-\beta)+p\gamma_{2}$, then
$II^{4}_{b,Q_{r}}(t)\lesssim\|u\|_{\B^{\gamma_{1},\gamma_{2},IV}_{p,q,m'}}$
follows from changing the order of $j$ and $j'$.

If $q>p$, by H\"older's inequality we get
$$\begin{array}{rl}
II^{4}_{b, Q_{r}}(t)
\lesssim&\ \sum\limits_{w\in\mathbb{Z}^{n}}\frac{|Q_{r}|^{\frac{q\gamma_{2}}{n}-\frac{q}{p}}}{(1+|w|)^{N}}
\sum\limits_{j\geq-\log_{2}r}\Big(\sum\limits_{j<j'+2}2^{(j-j')[2p(1-\beta)+p^{2}\gamma_{2}-\delta]}\Big)^{\frac{q-p}{p}}\\
&\times\ \Big\{\sum\limits_{j<j'+2}2^{(j-j')[2p(1-\beta)+p^{2}\gamma_{2}-\delta]}2^{qj'(\frac{n}{2}+\gamma_{1}-\frac{n}{p})}\times\\
&\quad \Big[\int^{2^{-2j'\beta}}_{0}\sum\limits_{(\epsilon',k')\in S^{w,j'}_{r}}|u^{\epsilon'}_{j',k'}(s)|^{p}(s2^{2j'\beta})^{m'}\frac{ds}{s}\Big]^{\frac{q}{p}}\Big\}
\lesssim \|u\|_{\B^{\gamma_{1},\gamma_{2},IV}_{p,q,m'}}.
\end{array}$$

\subsection{The setting (iv)}\label{sec11}
Again, we only consider the situation $1<p\leq2$ in the sequel.  For $i=4,5$, define
$$\begin{array}{rl}
IV^{m',i}_{b,Q_{r}}=&\ |Q_{r}|^{\frac{q\gamma_{2}}{n}-\frac{q}{p}}\sum\limits_{j\geq-\log_{2}r}2^{qj(\gamma_{1}+\frac{n}{2}-\frac{n}{p})}
\Big[\int^{2^{-2j\beta}}_{0}\sum\limits_{(\epsilon,k)\in S^{j}_{r}}|b^{\epsilon,i}_{j,k}(t)|^{p}(t2^{2j\beta})^{m'}\frac{dt}{t}\Big]^{\frac{q}{p}}.
\end{array}$$
Due to their similarity, we only check the first one of the following two cases:
 \begin{equation}\nonumber%\label{eq1}
\begin{cases}
\text{ Case 8.9: }
(t,x)\mapsto\sum\limits_{(\epsilon,j,k)\in\Lambda_{n}}b^{\epsilon,4}_{j,k}(t)\Phi^{\epsilon}_{j,k}(x)\ \ \hbox{is\ in}\ \
\B^{\gamma_{1},\gamma_{2}, IV}_{p,q,m'};\\
\text{ Case 8.10: } (t,x)\mapsto\sum\limits_{(\epsilon,j,k)\in\Lambda_{n}}b^{\epsilon,5}_{j,k}(t)\Phi^{\epsilon}_{j,k}(x)\ \ \hbox{is\ in}\ \
\B^{\gamma_{1},\gamma_{2}, IV}_{p,q,m'}.\\
\end{cases}
\end{equation}

Because $0<s<2^{-2j'\beta}$, one gets
$|v^{\epsilon''}_{j',k''}(s)|\lesssim2^{-\frac{nj'}{2}}2^{-j'(\gamma_{1}-\gamma_{2})}$
and
$$\begin{array}{rl}
\int^{2^{-2j'\beta}}_{0}\sum\limits_{(\epsilon'',k'')\in S^{w',j'}_{j,k}}|v^{\epsilon''}_{j',k''}(s)|^{p}ds\lesssim2^{-nj+p\gamma_{2}j}2^{-p(\frac{n}{2}+\gamma_{1}-\frac{n}{p})j'}2^{-2j'\beta},
\end{array}$$
By using (\ref{eqn:est3}), Lemma \ref{le7} (ii) and H\"older's inequality we get
$$\begin{array}{rl}
|b^{\epsilon,4}_{j,k}(t)|
&\lesssim2^{-\frac{nj}{2}+j+\frac{nj}{p}+(p-1)\gamma_{2}j}\sum\limits_{w\in\mathbb{Z}^{n}}
(1+|w|)^{-N}\sum\limits_{j<j'+2}2^{-(\frac{n}{2}+\gamma_{1})j'}\\
&2^{\frac{nj'(p-1)}{p}+(2-p)\gamma_{2}j'-\frac{2j'\beta(p-1)}{p}}\Big(\int^{2^{-2j'\beta}}_{0}
\sum\limits_{(\epsilon',k')\in S^{w,j'}_{j,k}}|u^{\epsilon'}_{j',k'}(s)|^{p}ds\Big)^{\frac{1}{p}}.
\end{array}$$
For $t2^{2j\beta}<1$, we then get
$$\begin{array}{rl}
IV^{m',4}_{b,Q_{r}}
\lesssim&\ |Q_{r}|^{\frac{q\gamma_{2}}{n}-\frac{q}{p}}\sum\limits_{j\geq-\log_{2}r}2^{qj(\gamma_{1}+\frac{n}{2}-\frac{n}{p})}
\Big\{\int^{2^{-2j\beta}}_{0}\sum\limits_{(\epsilon,k)\in S^{j}_{r}}2^{-\frac{pnj}{2}+pj+nj+p(p-1)\gamma_{2}j}\\
&\quad\Big[\sum\limits_{w\in\mathbb{Z}^{n}}
(1+|w|)^{-N}\sum\limits_{j<j'+2}2^{-(\frac{n}{2}+\gamma_{1})j'}
2^{\frac{nj'(p-1)}{p}+(2-p)\gamma_{2}j'}\\
&\quad2^{-\frac{2j'\beta(p-1)}{p}}\Big(\int^{2^{-2j'\beta}}_{0}
\sum\limits_{(\epsilon',k')\in S^{w,j'}_{j,k}}|u^{\epsilon'}_{j',k'}(s)|^{p}ds\Big)^{\frac{1}{p}}\Big]^{p}(t2^{2j\beta})^{m'}\frac{dt}{t}\Big\}^{\frac{q}{p}}.
\end{array}$$
Furthermore, we first apply H\"older's inequality to $w$, and then employ (\ref{eqn:est5}) to get that for
$\delta>0$,
$$\begin{array}{rl}
IV^{m',4}_{b,Q_{r}}
\lesssim&|Q_{r}|^{\frac{q\gamma_{2}}{n}-\frac{q}{p}}\sum\limits_{j\geq-\log_{2}r}2^{qj(\gamma_{1}+1+(p-1)\gamma_{2})}2^{-\frac{q\delta
j}{p}}\\
&\Big\{\int^{2^{-2j\beta}}_{0}\sum\limits_{(\epsilon,k)\in S^{j}_{r}}
\sum\limits_{j<j'+2}2^{\delta j'}
2^{-p(\frac{n}{2}+\gamma_{1})j'}2^{(p-1)nj'+p(2-p)\gamma_{2}j'-2\beta(p-1)j'}\\
&\sum\limits_{w\in\mathbb{Z}^{n}}\frac{1}{(1+|w|)^{N}}\Big(\int^{2^{-2j'\beta}}_{0}\sum\limits_{(\epsilon',k')\in S^{w,j'}_{j,k}}|u^{\epsilon'}_{j',k'}(s)|^{p}ds\Big)(t2^{2j'\beta})^{m'}\frac{dt}{t}\Big\}^{\frac{q}{p}}.
\end{array}$$
Because
$\int^{2^{-2j\beta}}_{0}(t2^{2j\beta})^{m}\frac{dt}{t}\lesssim1$, we obtain
$$\begin{array}{rl}
IV^{m',4}_{b,Q_{r}}\lesssim&|Q_{r}|^{\frac{q\gamma_{2}}{n}-\frac{q}{p}}\sum\limits_{j\geq-\log_{2}r}2^{qj(\gamma_{1}+1+(p-1)\gamma_{2})}2^{-\frac{q\delta
j}{p}}\\
&\quad\Big[\sum\limits_{j<j'+2}2^{\delta j'}
2^{pj'(\gamma_{1}+\frac{n}{2}-\frac{n}{p})} 2^{2\beta
j'}2^{-pj'(2-2\beta+p\gamma_{2})}\\
&\quad\sum\limits_{w\in\mathbb{Z}^{n}}(1+|w|)^{-N}\Big(\int^{2^{-2j'\beta}}_{0}\sum\limits_{(\epsilon',k')\in S^{w,j'}_{r}}|u^{\epsilon'}_{j',k'}(s)|^{p}ds\Big)\Big]^{\frac{q}{p}}.
\end{array}$$

If $q\leq p$, noticing $p\gamma_{2}+2-2\beta>0$
and taking
$0<\delta<p(\gamma_{1}+1+(p-1)\gamma_{2})$ we obtain
$$\begin{array}{rl}
IV^{m',4}_{b,Q_{r}}
\lesssim&\!\!\!\sum\limits_{w\in\mathbb{Z}^{n}}|Q_{r}|^{\frac{q\gamma_{2}}{n}-\frac{q}{p}}(1+|w|)^{-\frac{qN}{p}}
\sum\limits_{j\geq-\log_{2}r}\sum\limits_{j<j'+2}2^{q(\frac{\delta}{p}-p\gamma_{2}-2+2\beta)(j'-j)}\\
&\quad\quad 2^{qj'(\gamma_{1}+\frac{n}{2}-\frac{n}{p})}
\Big[\int^{2^{-2j'\beta}}_{0}\sum\limits_{(\epsilon',k')\in S^{w,j'}_{r}}|u^{\epsilon'}_{j',k'}(s)|^{p}(s2^{2j'\beta})^{m'}\frac{ds}{s}\Big]^{\frac{q}{p}}\lesssim \|u\|_{\B^{\gamma_{1},\gamma_{2},
IV}_{p,q,m'}},
\end{array}$$
where we have actually used the inequality
$$(s2^{2j'\beta})^{1-m'}\leq1\ \ \hbox{for}\ \
s\leq2^{-2j'\beta}\ \ \&\ \ 1-m'>0.
$$
and then changed the order of $j$ and $j'$.

If $q>p$, by H\"older's inequality we have
$$\begin{array}{rl}
IV^{m',4}_{b,Q_{r}}
&\lesssim\ \sum\limits_{w\in\mathbb{Z}^{n}}|Q_{r}|^{\frac{q\gamma_{2}}{n}-\frac{q}{p}}(1+|w|)^{-N}
\sum\limits_{j\geq-\log_{2}r}
\Big[\sum\limits_{j<j'+2}2^{p(2-2\beta+p\gamma_{2}-\frac{\delta}{p})(j-j')}\\
&\quad2^{pj'(\gamma_{1}+\frac{n}{2}-\frac{n}{p})}2^{2\beta j'}
\Big(\int^{2^{-2j'\beta}}_{0}\sum\limits_{(\epsilon',k')\in S^{w,j'}_{r}}|u^{\epsilon'}_{j',k'}(s)|^{p}ds\Big)\Big]^{\frac{q}{p}}
\lesssim \|u\|_{\B^{\gamma_{1},\gamma_{2}, IV}_{p,q,m'}}.
\end{array}$$

\end{document}